\newtheorem{theorem}{Theorem}[section]
\newtheorem{definition}[theorem]{Definition}
\newtheorem{conjecture}[theorem]{Conjecture}
\newtheorem{proposition}[theorem]{Proposition}
\newtheorem{problem}[theorem]{Problem}
\newtheorem{lemma}[theorem]{Lemma}
\begin{document}

\title{Combinatorial aspects of orthogonal group integrals}

\author{Teodor Banica}
\address{T.B.: Department of Mathematics, Cergy-Pontoise University, 95000 Cergy-Pontoise, France. {\tt teodor.banica@u-cergy.fr}}

\author{Jean-Marc Schlenker}
\address{J.M.S.: Institute of Mathematics at Toulouse, UMR CNRS 5219, Toulouse 3 University, 31062 Toulouse Cedex 9, France. {\tt schlenker@math.univ-toulouse.fr}}

\subjclass[2000]{58C35 (15A52, 33C80, 60B15)}
\keywords{Orthogonal group, Haar measure, Weingarten function}

\begin{abstract}
We study the integrals of type $I(a)=\int_{O_n}\prod u_{ij}^{a_{ij}}\,du$, depending on a matrix $a\in M_{p\times q}(\mathbb N)$, whose exact computation is an open problem. Our results are as follows: (1) an extension of the ``elementary expansion'' formula from the case $a\in M_{2\times q}(2\mathbb N)$ to the general case $a\in M_{p\times q}(\mathbb N)$, (2) the construction of the ``best algebraic normalization'' of $I(a)$, in the case $a\in M_{2\times q}(\mathbb N)$, (3) an explicit formula for $I(a)$, for diagonal matrices $a\in M_{3\times 3}(\mathbb N)$, (4) a modelling result in the case $a\in M_{1\times 2}(\mathbb N)$, in relation with the Euler-Rodrigues formula. Most proofs use various combinatorial techniques.  
\end{abstract}

\maketitle

\section*{Introduction}

An interesting open question, with several potential applications, is the exact computation of the polynomial integrals over the orthogonal group $O_n$. These integrals are best introduced in a ``rectangular form'', as functions of a matrix $a\in M_{p\times q}(\mathbb N)$, as follows:
$$I(a)=\int_{O_n}\prod_{i=1}^p\prod_{j=1}^qu_{ij}^{a_{ij}}\,du.$$

Of course, the case $p=q=n$ is the one we are interested in. The parameters $p,q$ are there only because they measure, and in a quite efficient way, the ``complexity'' of the problem. As an example, at $p=1$ one can use the basic fact that the first slice of $O_n$ is isomorphic to the sphere $S^{n-1}$, and the formula for the above integrals is simply
$$I\begin{pmatrix}a_1&\ldots&a_q\end{pmatrix}
=\varepsilon\cdot\frac{(n-1)!!a_1!!\ldots a_q!!}{(\Sigma a_i+n-1)!!},$$
where $m!!=(m-1)(m-3)(m-5)\ldots$, stopping at 1 or 2, and where $\varepsilon\in\{ 0,1\}$.

The complexity of the problem is measured as well by the value of $n\in\mathbb N$. For instance at $n=2$, a classical computation gives the following formula, with $\varepsilon\in\{-1,0,1\}$:
$$I_2\begin{pmatrix}a&c\\ b&d\end{pmatrix}=\varepsilon\cdot\frac{(a+d)!!(b+c)!!}{(a+b+c+d+1)!!}.$$

Another result, due to Diaconis and Shahshahani \cite{dsh}, is that the variables $n^{-1/2}u_{ij}$ are Gaussian and independent at $n=\infty$. This gives the following formula, with $\varepsilon\in\{0,1\}$:
$$I(a)=n^{-k}\left(\varepsilon\cdot\prod_{ij} a_{ij}!!+O(n^{-1})\right).$$
All these computations, basically done by using geometric methods, will be explained in detail in Section 1 below. We refer as well to Section 1 for the precise values of $\varepsilon$.  Some other geometric methods were developed by Olshanskii \cite{ols} and Neretin \cite{ner}. 

The integrals $I(a)$ can be studied as well by using combinatorial methods. It was known since Brauer \cite{bra} that the intertwining space ${\rm End}(u^{\otimes k})$ is spanned by certain explicit operators, associated to the pairings between $2k$ points. On the other hand, by basic representation theory, the orthogonal projection onto ${\rm End}(u^{\otimes k})$ is nothing but the matrix formed by the above quantities $I(a)$, with $\Sigma a_{ij}=2k$. So, Brauer's result reduces the computation of  $I(a)$ to certain combinatorial questions regarding pairings. The precise formula here, whose origins go back to Weingarten's paper \cite{wei}, is as follows:
$$\int_{O_n}u_{i_1j_1}\ldots u_{i_{2k}j_{2k}}\,du=\sum_{\pi,\sigma\in D_k}\delta_\pi(i)\delta_\sigma(j)W_{kn}(\pi,\sigma).$$
Here $D_k$ is the set of pairings of $\{1,\ldots,2k\}$, the delta symbols are $0$ or $1$, depending on whether the indices match or not, and $W_{kn}=G_{kn}^{-1}$, where $G_{kn}(\pi,\sigma)=n^{|\pi\vee\sigma|}$.

In general, the inversion of $G_{kn}$ is a difficult linear algebra problem. The first results here go back to Collins' paper \cite{col} for the unitary group $U_n$, and then to the paper of Collins and \'Sniady \cite{cs1}, where several other groups, including $O_n$, are discused. More results here were obtained by Collins and Matsumoto in \cite{cma}, and Zinn-Justin \cite{zin}. These combinatorial methods have a number of concrete applications, notably to random matrix questions \cite{sas}, \cite{cgm}, \cite{cs1}, \cite{cs2}, \cite{mno}, \cite{nov}, \cite{pre}, and to invariance questions \cite{def}.

The present paper is a continuation of our previous work \cite{int}. We will use the ``elementary expansion'' approach to the Weingarten function $W_{kn}$ developed there, in order to reach to some new results regarding the integrals $I(a)$. Our results are as follows:
\begin{enumerate}
\item We first have some theoretical results extending the ``elementary expansion'' techniques in \cite{int}, from the case $a\in M_{2\times q}(2\mathbb N)$ to the general case $a\in M_{p\times q}(\mathbb N)$.

\item As a first application of our improved elementary expansion methods we will construct the ``best algebraic normalization'' of $I(a)$, in the case $a\in M_{2\times q}(\mathbb N)$.

\item In the case where $a\in M_{p\times p}(\mathbb N)$ is diagonal the integrals $I(a)$ were explicitely computed in \cite{int} at $p=2$. Here we obtain an explicit formula at $p=3$.

\item Finally, we have a probabilistic modelling result for the pair of variables $(u_{11},u_{12})$, in relation with the Euler-Rodrigues formula, which is available at $n=3$. 
\end{enumerate}

The precise statements of the above results, as well as a substantial number of technical versions and comments regarding what happens in relation with each problem when looking at more general $p\times q$ shapes will be given in the body of the paper.

The paper is organized as follows: Section 1 is preliminary, and in Sections 2-3, 4-5, 6-7, 8-9 we prove the above results (1-4). Section 10 contains a few concluding remarks.

\subsection*{Acknowledgements}

We would like to thank B. Collins for several useful discussions. The work of T.B. was supported by the ANR grants ``Galoisint'' and ``Granma''.

\section{Group integrals}

Our basic object of study will be the polynomial integrals over $O_n$. It is convenient to introduce these integrals in a ``rectangular form'', as follows. 

\begin{definition}
Associated to any matrix $a\in M_{p\times q}(\mathbb N)$ is the integral
$$I(a)=\int_{O_n}\prod_{i=1}^p\prod_{j=1}^qu_{ij}^{a_{ij}}\,du$$
with respect to the uniform measure on the orthogonal group $O_n$.
\end{definition}

Our main tool for the computation of integrals over $O_n$ will be a combinatorial formula due to Collins and \'Sniady \cite{cs1}, whose origins go back to Weingarten's paper \cite{wei}.

\begin{theorem}
We have the Weingarten formula
$$\int_{O_n}u_{i_1j_1}\ldots u_{i_{2k}j_{2k}}\,du=\sum_{\pi,\sigma\in D_k}\delta_\pi(i)\delta_\sigma(j)W_{kn}(\pi,\sigma),$$
where the objects on the right are as follows:
\begin{enumerate}
\item $D_k$ is the set of pairings of $\{1,\ldots,2k\}$, also called Brauer diagrams.

\item The delta symbols are $0$ or $1$, depending on whether indices match or not.

\item The Weingarten matrix is $W_{kn}=G_{kn}^{-1}$, where $G_{kn}(\pi,\sigma)=n^{|\pi\vee\sigma|}$.
\end{enumerate}
\end{theorem}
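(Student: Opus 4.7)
The plan is to realize the left-hand side as the matrix of an orthogonal projection, identify a spanning set of its range via Brauer's first fundamental theorem, and then expand the projection in terms of this set using the inverse Gram matrix.

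First I would observe that, by bi-invariance of the Haar measure, the operator $P = \int_{O_n} u^{\otimes 2k}\,du$ acting on $(\mathbb{R}^n)^{\otimes 2k}$ is the orthogonal projection onto the fixed subspace $V = \mathrm{Fix}(u^{\otimes 2k}) = \{\xi : u^{\otimes 2k}\xi = \xi \text{ for all }u \in O_n\}$. Indeed, $P$ is self-adjoint, idempotent by Fubini and translation-invariance, and its range is exactly $V$ by unitary averaging. Read in the standard basis, the entries of $P$ are precisely the integrals $\int u_{i_1j_1}\cdots u_{i_{2k}j_{2k}}\,du$ that appear on the left-hand side of the formula.

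Next, by the first fundamental theorem of invariant theory for $O_n$ (Brauer, Weyl), the space $V$ is spanned by the Brauer vectors
$$\xi_\pi = \sum_{i_1,\ldots,i_{2k}} \delta_\pi(i_1,\ldots,i_{2k})\, e_{i_1}\otimes\cdots\otimes e_{i_{2k}}, \qquad \pi \in D_k,$$
where $\delta_\pi(i)=1$ precisely when the index assignment is constant along each block of $\pi$. A direct count of free parameters gives
$$\langle \xi_\pi,\xi_\sigma\rangle = \sum_{i}\delta_\pi(i)\delta_\sigma(i) = n^{|\pi\vee\sigma|},$$
because the joined partition $\pi\vee\sigma$ records the independent loops in the superposition of the two pair diagrams, each loop contributing a free $n$-valued index; this is the matrix $G_{kn}$.

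Finally, the orthogonal projection onto the span of a family of vectors with Gram matrix $G$ is given by $P = \sum_{\pi,\sigma} (G^{-1})(\pi,\sigma)\,\xi_\pi\xi_\sigma^*$, as one checks by verifying $P\xi_\tau = \xi_\tau$ for each $\tau$ using $\sum_\sigma G(\tau,\sigma)W_{kn}(\sigma,\pi)=\delta_{\tau\pi}$. Computing the $((i_1,\ldots,i_{2k}),(j_1,\ldots,j_{2k}))$-entry of this expression yields exactly $\sum_{\pi,\sigma}\delta_\pi(i)\delta_\sigma(j)W_{kn}(\pi,\sigma)$, which matches the asserted formula. The main obstacle is the regime of small $n$, where the family $\{\xi_\pi\}_{\pi\in D_k}$ becomes linearly dependent and $G_{kn}$ is only positive semidefinite; the cleanest fix is to interpret $W_{kn}$ as the Moore--Penrose pseudoinverse of $G_{kn}$, after which the projection identity above remains valid since the sum $\sum W_{kn}(\pi,\sigma)\xi_\pi\xi_\sigma^*$ only sees the action of $G_{kn}^+$ on its own range, which is exactly $\mathrm{span}\{\xi_\pi\}=V$.
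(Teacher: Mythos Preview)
The paper does not actually prove this theorem: it is stated as a known tool, attributed to Collins and \'Sniady \cite{cs1} with origins in Weingarten \cite{wei}, and immediately followed by worked examples rather than a proof. So there is no in-paper argument to compare against.

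That said, your sketch is correct and is essentially the standard proof one finds in the cited references: realize $\int u^{\otimes 2k}\,du$ as the orthogonal projection onto the fixed space, invoke Brauer's first fundamental theorem to span that space by the vectors $\xi_\pi$, compute their Gram matrix as $G_{kn}(\pi,\sigma)=n^{|\pi\vee\sigma|}$, and write the projection as $\sum_{\pi,\sigma}W_{kn}(\pi,\sigma)\,\xi_\pi\xi_\sigma^*$. Your treatment of the small-$n$ degeneracy via the Moore--Penrose pseudoinverse is the right fix and is consistent with how the paper uses the formula (the statement implicitly assumes $G_{kn}$ is invertible, i.e.\ $n$ large enough relative to $k$, but all downstream applications only use identities that survive the pseudoinverse interpretation). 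One small point worth making explicit: the projection formula $P=\sum W(\pi,\sigma)\xi_\pi\xi_\sigma^*$ requires not just $P\xi_\tau=\xi_\tau$ but also $P^*=P$ and $P\eta=0$ for $\eta\perp V$; the first follows from symmetry of $W$ and the second is immediate since each $\xi_\sigma^*$ vanishes on $V^\perp$.
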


As an example, the integrals of quantities of type $u_{i_1j_1}u_{i_2j_2}u_{i_3j_3}u_{i_4j_4}$ appear as sums of coefficients of the Weingarten matrix $W_{2n}$, which is given by:
$$W_{2n}=\begin{pmatrix}
n^2&n&n\\ 
n&n^2&n\\ 
n&n&n^2\end{pmatrix}^{-1}
=\frac{1}{n(n-1)(n+2)}
\begin{pmatrix}
n+1&-1&-1\\ 
-1&n+1&-1\\ 
-1&-1&n+1\end{pmatrix}.$$

More precisely, the various integrals at $k=2$ can be computed as follows:
\begin{eqnarray*}
I\begin{pmatrix}4&0\\ 0&0\end{pmatrix}
&=&\int_{O_n}u_{11}u_{11}u_{11}u_{11}\,du=\sum_{\pi,\sigma}W_{2n}(\pi,\sigma)=\frac{3}{n(n+2)},\\
I\begin{pmatrix}2&2\\ 0&0\end{pmatrix}
&=&\int_{O_n}u_{11}u_{11}u_{12}u_{12}\,du=\sum_\pi W_{2n}(\pi,\cap\cap)=\frac{1}{n(n+2)},\\
I\begin{pmatrix}2&0\\ 0&2\end{pmatrix}
&=&\int_{O_n}u_{11}u_{11}u_{22}u_{22}\,du=W_{2n}(\cap\cap,\cap\cap)=\frac{n+1}{n(n-1)(n+2)}.
\end{eqnarray*}

In general, the computation of the Weingarten matrix is a quite subtle combinatorial problem and the first results here go back to \cite{wei}, \cite{cs1}. A quite powerful formula was recently obtained in \cite{cma} and was further processed and clarified in \cite{zin}.

As a first application of the above Weingarten methods let us record a first basic result,   discussed in \cite{owg},  regarding the vanishing of $I(a)$, and its $n\to\infty$ behavior. We use the notation $m!!=(m-1)(m-3)(m-5)\ldots$, with the product stopping at 1 or 2.

\begin{proposition}
If the sum in some row or column of $a$ is odd, then $I(a)=0$. Also,
$$I(a)=n^{-k}\left(\varepsilon\cdot\prod_{ij}a_{ij}!!+O(n^{-1})\right),$$
where $k=\Sigma a_{ij}$, and $\varepsilon=1$ if all the entries are even, and $\varepsilon=0$ otherwise.
\end{proposition}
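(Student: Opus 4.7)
The plan is to prove the two claims by different tools: the vanishing will come from Haar-invariance symmetries, while the asymptotic will follow by substituting the Weingarten formula of Theorem 1.2 and counting leading-order pairings. The main technical point is the uniform asymptotic expansion of the Weingarten matrix $W_{kn}$, which is standard but worth flagging.

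\emph{Vanishing.} I invoke invariance of Haar measure under left (resp.\ right) multiplication by a diagonal sign matrix $D_\eta=\mathrm{diag}(\eta_1,\ldots,\eta_n)\in O_n$, with $\eta_i\in\{\pm1\}$. Left translation sends $u_{ij}\mapsto\eta_iu_{ij}$, so
$$I(a)=\Bigl(\prod_i\eta_i^{r_i}\Bigr)I(a),$$
with $r_i=\sum_ja_{ij}$ the $i$-th row sum. If some $r_{i_0}$ is odd, choose $\eta_{i_0}=-1$ and $\eta_i=1$ otherwise; the resulting sign is $-1$, forcing $I(a)=0$. The column-sum case is identical via right translations.

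\emph{Weingarten expansion.} Listing the factors as $u_{i_1j_1}\cdots u_{i_{2k}j_{2k}}$ with each $(i,j)$ occurring $a_{ij}$ times (so $2k=\Sigma a_{ij}$), Theorem 1.2 gives
$$I(a)=\sum_{\pi,\sigma\in D_k}\delta_\pi(i)\,\delta_\sigma(j)\,W_{kn}(\pi,\sigma).$$
Since $|\pi\vee\sigma|\le k$ with equality iff $\pi=\sigma$, we have $G_{kn}=n^k(\mathrm{Id}+n^{-1}R)$ with $R$ a fixed matrix indexed by $D_k$ and depending only on $k$. Neumann inversion, valid for $n$ larger than some constant $C(k)$, then yields
$$W_{kn}(\pi,\sigma)=n^{-k}\bigl(\delta_{\pi\sigma}+O(n^{-1})\bigr),$$
uniformly in $\pi,\sigma$. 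This uniformity is the only point where some care is needed; it is free here because $|D_k|$ is a constant once $a$ is fixed.

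\emph{Counting.} The computation now reduces to identifying the leading coefficient $N(a)$, which is the number of pairings $\pi\in D_k$ satisfying $\delta_\pi(i)=\delta_\pi(j)=1$. Such a $\pi$ must pair positions $\alpha,\beta$ with both $i_\alpha=i_\beta$ and $j_\alpha=j_\beta$, i.e.\ two occurrences of the same variable $u_{ij}$; equivalently, it decomposes as an independent pairing, for each $(i,j)$, of the $a_{ij}$ positions labelled $(i,j)$. Since the number of pairings of an $m$-element set is $m!!$ for even $m$ and $0$ for odd $m$, we obtain $N(a)=\prod_{ij}a_{ij}!!$ when every $a_{ij}$ is even and $N(a)=0$ otherwise, which matches the claimed value of $\varepsilon$.
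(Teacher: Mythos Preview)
Your argument is correct and matches the Weingarten-based approach the paper alludes to (Proposition~1.3 is stated without proof here, with a pointer to \cite{owg}). For the vanishing part you use sign-matrix invariance rather than Theorem~1.2; the Weingarten route gives the same conclusion in one line, since an index value occurring an odd number of times in $i$ (resp.\ $j$) can never be matched by any pairing, so every $\delta_\pi(i)$ (resp.\ $\delta_\sigma(j)$) vanishes. Either argument is fine.

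One point worth flagging explicitly: you take $2k=\Sigma a_{ij}$, in line with the convention of Theorem~1.2, so what you actually prove is $I(a)=n^{-\Sigma a_{ij}/2}\bigl(\varepsilon\prod a_{ij}!!+O(n^{-1})\bigr)$. The proposition as printed has $k=\Sigma a_{ij}$, which is a slip in the paper (compare the worked example $\int u_{11}^4\,du=3/(n(n+2))\sim 3n^{-2}$, not $3n^{-4}$); your exponent is the correct one, but it would be worth saying so rather than silently overloading $k$.
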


We can see from the above result that $I(a)$ depends in a non-trivial way on the parity of the entries $a_{ij}$. We have the following definition.

\begin{definition}
A matrix $a\in M_{p\times q}(\mathbb N)$ is called ``admissible'' if the sum on each of its rows and columns is an even number.
\end{definition}

In what follows we will restrict attention to the admissible matrices. We should mention here that we do not know if the converse of the vanishing statement in Proposition 1.3 holds or not. This is an interesting question, that we will not investigate in this paper.

Finally, let us work out a few concrete examples.

\begin{proposition}
At $n=2$ we have the formula
$$I\begin{pmatrix}a&c\\ b&d\end{pmatrix}=\varepsilon\cdot\frac{(a+d)!!(b+c)!!}{(a+b+c+d+1)!!},$$
where $\varepsilon=1$ if $a,b,c,d$ are even, $\varepsilon=-1$ is $a,b,c,d$ are odd, and $\varepsilon=0$ otherwise.
\end{proposition}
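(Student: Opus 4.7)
\medskip

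\textbf{Proof plan.} My plan is to parametrize $O_2$ explicitly and reduce the integral to a one-variable trigonometric integral, which can then be evaluated by the classical Beta-function formula.

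First I would write any element of $O_2$ as either a rotation
$$R(\theta)=\begin{pmatrix}\cos\theta&-\sin\theta\\ \sin\theta&\cos\theta\end{pmatrix}\qquad\text{or}\qquad S(\theta)=\begin{pmatrix}\cos\theta&\sin\theta\\ \sin\theta&-\cos\theta\end{pmatrix},$$
with $\theta\in[0,2\pi)$, and describe the Haar measure as $\tfrac12$ times the uniform measure on rotations plus $\tfrac12$ times the uniform measure on reflections. Substituting into the integrand yields in both cases the same trigonometric monomial $\cos^{a+d}\theta\,\sin^{b+c}\theta$, multiplied by $(-1)^c$ for the rotation branch and by $(-1)^d$ for the reflection branch. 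Hence
$$I(a)=\frac{(-1)^c+(-1)^d}{4\pi}\int_0^{2\pi}\cos^{a+d}\theta\,\sin^{b+c}\theta\,d\theta.$$

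Next I would run the parity case analysis. The prefactor $(-1)^c+(-1)^d$ kills everything unless $c\equiv d\pmod2$, and a standard symmetry argument on $\int_0^{2\pi}$ kills the trigonometric integral unless both $a+d$ and $b+c$ are even. Combining these two conditions forces $a,b,c,d$ to have a common parity, so $\varepsilon=0$ in all remaining ``mixed'' cases. When all four entries are even we get prefactor $+2$, and when all four are odd we get $-2$; this is exactly the sign rule stated for $\varepsilon$.

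Finally, for the nonvanishing cases I would invoke the classical identity, valid for non-negative integers $M,K$,
$$\int_0^{2\pi}\cos^{2M}\theta\,\sin^{2K}\theta\,d\theta=\frac{2\pi\,(2M-1)!_{\rm std}!\,(2K-1)!_{\rm std}!}{(2M+2K)!_{\rm std}!},$$
applied with $2M=a+d$ and $2K=b+c$. Here the main (if very minor) obstacle is just translating between the two double-factorial conventions: the paper uses $m!!=(m-1)(m-3)\cdots$, which is $(m-1)!_{\rm std}!$ in standard notation. After this rewriting, $(2M-1)!_{\rm std}! = (2M)!! = (a+d)!!$ and analogously $(2K-1)!_{\rm std}! = (b+c)!!$, while $(2M+2K)!_{\rm std}! = (a+b+c+d+1)!!$. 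Putting everything together and absorbing the $\tfrac{1}{4\pi}$ prefactor yields the claimed formula with the correct $\varepsilon\in\{-1,0,1\}$.
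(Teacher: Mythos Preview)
Your proof is correct and follows essentially the same approach as the paper, which merely sketches ``restrict the integration to $SO_2=S^1$ and use a well-known trigonometric identity''; your version is simply a fully fleshed-out execution of that idea, handling both components of $O_2$ explicitly and carrying out the parity bookkeeping and the double-factorial conversion carefully.
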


\begin{proof}
The idea is that we can restrict the integration to $SO_2=S^1$, and the result follows from a well-known trigonometric identity. See \cite{omx}.
\end{proof}

\begin{proposition}
For any $a\in\mathbb N^q$ we have
$$I(a)=\varepsilon\cdot\frac{(n-1)!!a_1!!\ldots a_q!!}{(\Sigma a_i+n-1)!!},$$
where $\varepsilon=1$ if all entries of $a$ are even, and $\varepsilon=0$ otherwise.
\end{proposition}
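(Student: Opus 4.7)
The plan is to reduce this $p=1$ case to an integral over the sphere, then evaluate that via the standard Gaussian polar-coordinates trick, and finally match the result against the claimed double factorial expression.

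First I would use the observation already mentioned in the introduction: the first row $(u_{11},\ldots,u_{1n})$ of a Haar-distributed element of $O_n$ is uniformly distributed on the sphere $S^{n-1}$. Since the integrand only involves entries in the first row, this gives
$$I(a)=\int_{S^{n-1}}x_1^{a_1}\cdots x_q^{a_q}\,d\sigma(x),$$
where $\sigma$ is the normalized uniform measure on $S^{n-1}$. If any exponent $a_i$ is odd, the reflection $x_i\mapsto-x_i$ is a measure-preserving symmetry of $S^{n-1}$ that flips the sign of the integrand, so the integral vanishes and $\varepsilon=0$, as claimed.

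For the non-vanishing case, assume all $a_i$ are even. The key trick is to write $\mathbb R^n\setminus\{0\}=(0,\infty)\times S^{n-1}$ in polar coordinates and exploit that the standard Gaussian measure factorizes accordingly. Letting $g=(g_1,\ldots,g_n)$ be a standard Gaussian vector and $R=\|g\|$, the random variables $R$ and $g/R$ are independent, with $g/R$ uniform on $S^{n-1}$. Hence, writing $k=\Sigma a_i$,
$$\prod_{i=1}^q\mathbb E[g_i^{a_i}]=\mathbb E[R^{k}]\cdot\int_{S^{n-1}}x_1^{a_1}\cdots x_q^{a_q}\,d\sigma(x).$$
The Gaussian moments on the left are $\mathbb E[g_i^{a_i}]=a_i!!$ in the paper's notation (since $a_i!!$ here denotes the classical $(a_i-1)!!$ when $a_i$ is even). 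For the denominator, $R^2$ is $\chi^2_n$-distributed, and a standard gamma-integral computation (or the recursion $\mathbb E[R^{k}]=(n+k-2)\cdot\mathbb E[R^{k-2}]$ obtained by one integration by parts) gives
$$\mathbb E[R^k]=n(n+2)(n+4)\cdots(n+k-2).$$

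The last step is a purely arithmetic check that
$$\frac{1}{n(n+2)\cdots(n+k-2)}=\frac{(n-1)!!}{(k+n-1)!!},$$
which one verifies by splitting the two cases $n$ even and $n$ odd and cancelling the common factors of the author's double factorial. Combining this with the previous display yields exactly the claimed expression. I expect the only mildly delicate point to be this bookkeeping with the nonstandard double factorial convention $m!!=(m-1)(m-3)\cdots$; once that is straight, the proof is short and essentially a repackaging of the Gaussian/spherical moments formula, so one could also simply invoke it from any standard reference as in Proposition 1.5.
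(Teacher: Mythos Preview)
Your proof is correct and complete. The reduction to a spherical integral via the first row of $O_n$, the parity argument for the vanishing case, the Gaussian polar factorization $g=R\omega$ with $R$ and $\omega$ independent, and the moment computations are all sound; the bookkeeping with the paper's convention $m!!=(m-1)(m-3)\cdots$ is handled correctly.

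The paper's own proof takes a different route: it establishes the $n=2$ case first (from Proposition~1.5, i.e.\ the explicit trigonometric computation on $SO_2=S^1$) and then argues by induction on $n$ using spherical coordinates and Fubini, essentially exploiting the recursive fibration $S^{n-1}\to S^{n-2}$. Your argument avoids any induction by going through the Gaussian measure on $\mathbb R^n$ directly; this is shorter and self-contained, and it sidesteps the need for the separate $n=2$ input. The paper's inductive approach, on the other hand, ties the one-row formula more visibly to the $2\times 2$ formula of Proposition~1.5 and to the geometric structure of the spheres, which fits the paper's emphasis on building up from low-dimensional cases. Both are standard derivations of the same classical spherical moment formula.
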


\begin{proof}
At $n=2$ this follows from Proposition 1.5. In general, the result follows from this observation by using spherical coordinates and the Fubini theorem. See \cite{omx}.
\end{proof}

\section{Elementary expansion}

We discuss here a general method from \cite{int} for the computation of the integrals $I(a)$. We will extend this method, from the case $a\in M_{2\times q}(2\mathbb N)$ discussed in \cite{int} to the general case, where $a\in M_{p\times q}(\mathbb N)$ is ``admissible'' in the sense of Definition 1.4.

We begin with a number of combinatorial definitions.

\begin{definition} 
Let $a_1,\ldots,a_p\in\mathbb N$ be such that the number $A=\Sigma a_k$ is even, and let $\sigma$ be a pairing of the set $\{1,\ldots,A\}$. We call ``type'' of $\sigma$ the matrix $r\in M_p(\mathbb N)$ where $r_{kl}$ is the number of elements of $\{(\sum_{i=1}^{k-1}a_i)+1,(\sum_{i=1}^{k-1}a_i)+2,\ldots,(\sum_{i=1}^{k-1}a_i)+a_k\}$ joined by $\sigma$ to elements of $\{(\sum_{i=1}^{l-1}a_i)+1,(\sum_{i=1}^{l-1}a_i)+2,\ldots,(\sum_{i=1}^{l-1}a_i)+a_l\}$.
\end{definition}

Observe that this matrix $r\in M_p(\mathbb N)$ is by definition symmetric, even on the diagonal, and the sum on its $i$-th row or column is equal to $a_i$, for any $i\in\{1,\ldots,p\}$.

As an example, let $p=3$ and $a_1=3,a_2=5,a_3=4$. In this case the total sum $A=12$ is indeed even, and the set $\{1,\ldots,A\}$ to be partitioned is:
$$\{1,\ldots,12\}=\{1,2,3,\underline{4},\underline{5},\underline{6},\underline{7},\underline{8},\overline{9},\overline{10},\overline{11},\overline{12}\}.$$
Here the upper and lower bars are there for reminding us that we have $12=3+5+4$. Now consider an arbitrary pairing of this set, as for instance the following one:
$$\sigma=\{(1,\underline{4}),(2,\underline{6}),(3,\overline{9}),(\underline{5},\underline{7}),(\underline{8},\overline{12}),(\overline{10},\overline{11})\}$$
According to the above definition, the type of this pairing is simply the symmetric $3\times 3$ matrix whose entries count the various types of paired points (underlined-underlined, underlined-overlined, and so on). So, the type of this pairing is:
$$r=\begin{pmatrix}
0&2&1\\
2&2&1\\
1&1&2
\end{pmatrix}.$$

Now back to the general case, we will need the following observation, which follows from definitions. The number of pairings of a given type $r\in M_p(\mathbb N)$ is given by:
$$K_r(a)=\left[\prod_{k<l}\begin{pmatrix}a_k\\ r_{kl}\end{pmatrix}\begin{pmatrix}a_l\\ r_{kl}\end{pmatrix}r_{kl}!\right]\left[\prod_ir_{ii}!!\right].$$

In what follows we will actually need an abstract formulation plus matrix generalization of the above notions. We have the following key definition. 

\begin{definition}
To any column vector $a\in M_{p\times 1}(\mathbb N)$ we associate the set $[a]$ consisting of the matrices $r\in M_p(\mathbb N)$ which are symmetric, even on the diagonal, and such that the sum on the $i$-th row and on the $i$-th column is equal to $a_i$. For $r\in[a]$ we let:
$$K_r(a)=\left[\prod_{k<l}\begin{pmatrix}a_k\\ r_{kl}\end{pmatrix}\begin{pmatrix}a_l\\ r_{kl}\end{pmatrix}r_{kl}!\right]\left[\prod_ir_{ii}!!\right].$$
More generally, to any matrix $a\in M_{p\times q}(\mathbb N)$ we associate the set $[a]=[a_1]\times\ldots\times[a_q]$, where $a_1,\ldots,a_q$ are the columns of $a$, and for $r\in[a]$ we let $K_r(a)=\prod_{j=1}^qK_{r^j}(a_j)$.
\end{definition}

We recall from \cite{int} that a matrix $a\in M_{p\times q}(\mathbb N)$ is called elementary when the sum on each of its columns is equal to 2. As pointed out there, all the Weingarten function values $W_{kn}(\pi,\sigma)$ are quantities of type $I(a)$, with $a$ being an elementary matrix. Moreover, it was shown in \cite{int} that a suitable modification of this idea leads to an ``elementary expansion'' formula for the two-row matrices. We refer the reader to \cite{int} for full details here, the point being that a quick reading of \cite{int} is probably useful, but definitely not necessary, for the understanding of the most general ``elementary expansion'' formula, to be explained now.

For $\alpha\in\mathbb R$ and $R\in\mathbb N$ we let $\alpha^R=\alpha\ldots \alpha$ ($R$ times), regarded as a $1\times R$ vector (that we call in the statement below ``block''). Also, for $r=(r^j)\in [a]$ we set $R_{kl}=\Sigma_jr_{kl}^j$. We have then the following result extending the similar ``two-row'' formula from \cite{int}.

\begin{proposition}
For any $a\in M_{p\times q}(\mathbb N)$ we have the ``elementary expansion'' formula
$$I(a)=\sum_{r\in[a]}K_r(a)I(E_r(a)),$$
where the elementary matrix $E_r(a)$ is constructed as follows:
\begin{enumerate}
\item There are $p$ rows, and the columns are block-indexed by $(k,l)$ with $k\leq l$.

\item On the $(k,l)$-th column $(k<l)$ we have an $1^{R_{kl}}$ block at rows $k$ and $l$.

\item On the $(i,i)$-th column we have a $2^{R_{ii}/2}$ block at row $i$.

\item The other entries are all equal to $0$.
\end{enumerate}
\end{proposition}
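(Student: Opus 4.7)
The plan is to start from the Weingarten expansion of Theorem~1.2 applied to $I(a)$, using a canonical ordering of the $2k=\sum_{ij}a_{ij}$ indices: I would list the column-$j$ indices in contiguous blocks $B_j$, and within each $B_j$ list the row-$i$ indices in contiguous sub-blocks of length $a_{ij}$. With this ordering, $\delta_\sigma(j)=1$ forces $\sigma$ to pair only within each $B_j$, so the column-respecting pairings decompose as $(\sigma^1,\ldots,\sigma^q)$, each $\sigma^j$ a pairing of $B_j$ of some type $r^j\in[a_j]$; the overall type lies in $[a]$, and a column-by-column multinomial count shows that the number of $\sigma$ of a given type $r$ is $K_r(a)$.

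The heart of the proof will be the invariance, under the diagonal $S_{2k}$-action on $(\sigma,i)$, of the inner sum
$$\Phi(\sigma,i)=\sum_{\pi}\delta_\pi(i)\,W_{kn}(\pi,\sigma).$$
I would establish this by noting that $G_{kn}(\pi,\sigma)=n^{|\pi\vee\sigma|}$ is $S_{2k}$-invariant under the simultaneous relabelling of $\pi$ and $\sigma$, hence so is the inverse $W_{kn}$; combined with the compatible transformation of $\delta_\pi(i)$, the change of variable $\pi\mapsto\tau^{-1}\cdot\pi$ closes the identity. Consequently $\Phi(\sigma,i)$ depends on $(\sigma,i)$ only through the symmetric ``pair-type'' matrix $R$ whose $(k,l)$-entry counts the pairs of $\sigma$ joining a row-$k$ position to a row-$l$ position (with $R_{kk}$ equal to twice the number of within-row-$k$ pairs). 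For $\sigma$ of type $r=(r^j)_j$ this matrix is exactly $R_{kl}=\sum_j r^j_{kl}$, i.e.\ the $R$ used to build $E_r(a)$.

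To finish, I would apply the Weingarten expansion to $I(E_r(a))$: every column of $E_r(a)$ has exactly two entry-slots, so there is a unique column-respecting pairing $\sigma_0$, obtained by pairing the two slots in each column, whence $I(E_r(a))=\Phi(\sigma_0,i')$ where $i'$ is the row-index string of $E_r(a)$. A direct inspection of the four block types in the statement shows that $(\sigma_0,i')$ realises the same pair-type matrix $R$: off-diagonal $(k,l)$-blocks with $k<l$ contribute $R_{kl}$ cross-row pairs, and diagonal $(k,k)$-blocks contribute $R_{kk}/2$ within-row-$k$ pairs. By the invariance lemma, $\Phi(\sigma,i)=I(E_r(a))$ whenever $\sigma$ has type $r$, and gathering terms in the Weingarten expansion of $I(a)$ according to the type of $\sigma$ yields the claimed formula. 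The main obstacle will be the invariance lemma in the second paragraph: while the $S_{2k}$-equivariance of $G_{kn}$ and of the delta symbol is transparent, one must take care to push $\sigma$ and the index string $i$ by the same $\tau$ and to absorb the resulting transformation of $\delta_\pi$ by a matching reparametrisation of the summation variable $\pi$.
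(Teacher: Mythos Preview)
Your proposal is correct and follows essentially the paper's route: apply the Weingarten formula, group the column-respecting pairings $\sigma$ by their type $r\in[a]$, and identify the remaining inner $\pi$-sum with $I(E_r(a))$. The only cosmetic difference is that the paper orders the $2k$ slots row-first and makes the identification by choosing a column-index for $E_r(a)$ that singles out $\sigma'=\sigma$ in its Weingarten expansion, whereas you order column-first and phrase the same identification via the $S_{2k}$-equivariance of $W_{kn}$; both arguments encode the same symmetry.
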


\begin{proof}
As a first remark, the above statement might look highly abstract, and its good understanding might seem to require a lot of preliminary examples. Indeed, so it is. In what follows we will simply present the abstract proof of the above statement, and we will leave the examples and particular cases for a bit later. These basically consist of the particular case $q=2$, discussed in Proposition 3.2 below, and of the particular case $q=3$, discussed in Proposition 6.1 below. The reader might find it useful to look first at Proposition 3.2 and Proposition 6.1 below, as an illustration of the above statement.

Let us first apply the Weingarten formula. With $A_i=\sum_{j=1}^qa_{ij}$, we get:
\begin{eqnarray*}
I\begin{pmatrix}a\end{pmatrix}
&=&\int_{O_n}u_{11}^{a_{11}}\ldots u_{1q}^{a_{1q}}\ldots\ldots u_{p1}^{a_{p1}}\ldots u_{pq}^{a_{pq}}\,du\\
&=&\sum_{\pi\sigma}\delta_\pi(1^{A_1}\ldots p^{A_p})\delta_\sigma(1^{a_{11}}\ldots q^{a_{1q}}\ldots\ldots1^{a_{p1}}\ldots q^{a_{pq}})W_{kn}(\pi,\sigma)\\
&=&\sum_\sigma\delta_\sigma(1^{a_{11}}\ldots q^{a_{1q}}\ldots\ldots1^{a_{p1}}\ldots q^{a_{pq}})\sum_\pi\delta_\pi(1^{A_1}\ldots p^{A_p})W_{kn}(\pi,\sigma).
\end{eqnarray*}
Now let us look at $\sigma$. In order for the $\delta_\sigma$ symbol not to vanish, $\sigma$ must connect between themselves the $\Sigma a_{i1}$ copies of $1$, the $\Sigma a_{i2}$ copies of 2, and so on, up to the $\Sigma a_{iq}$ copies of $q$. So let $r\in[a]$ be the type of $\sigma$ in the sense of Definition 2.2. If we denote by $[r]$ the set of pairings of type $r$, the above formula becomes:
$$I(a)=\sum_{r\in[a]}\sum_{\sigma\in[r]}\sum_\pi\delta_\pi(1^{A_1}\ldots p^{A_p})W_{kn}(\pi,\sigma).$$
Let us compute now the elementary integral in the above statement. We label by $\sigma_1,\ldots,\sigma_k$ the strings of $\sigma$, and we consider the multi-index $j\in\{1,\ldots,k\}^{pk}$ given by $j_r\in\sigma_r$, for any $r\in\{1,\ldots,k\}$, so that $\delta_{\sigma'}(j)=\delta_{\sigma\sigma'}$ for any $\sigma'$. From the Weingarten formula we get:
\begin{eqnarray*}
I(E_r(a))
&=&\int_{O_n}u_{1j_1}\ldots u_{1j_{A_1}}\ldots\ldots u_{pj_{A_1+\ldots+A_{p-1}+1}}\ldots u_{pj_{A_1+\ldots+A_p}}\,du\\
&=&\sum_{\pi\sigma'}\delta_\pi(1^A2^B)\delta_{\sigma'}(j)W_{kn}(\pi,\sigma')\\
&=&\sum_\pi\delta_\pi(1^{A_1}\ldots p^{A_p})W_{kn}(\pi,\sigma).
\end{eqnarray*}
Together with the above formula and with $K_r(a)=\# [r]$ this finishes the proof.
\end{proof}

As a first consequence of the elementary expansion formula we have the following key result, which already appeared in \cite{int} in a more particular formulation.

\begin{proposition}
We have the ``compression formula''
$$I\begin{pmatrix}a&c\\ b&0\end{pmatrix}=\frac{\prod c_j!!}{(\Sigma c_j)!!}
\,I\begin{pmatrix}a&\Sigma c_j\\ b&0\end{pmatrix}$$
valid for any vectors $a\in\mathbb N^q,c\in(2\mathbb N)^r$ and any matrix $b\in M_{p\times q}(\mathbb N)$.
\end{proposition}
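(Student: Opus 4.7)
My plan is to apply the elementary expansion formula of Proposition 2.3 to both sides of the claimed identity and compare term-by-term. Write $M$ for the left-hand matrix and $N$ for the right-hand matrix. Following Definition 2.2, $[M]$ decomposes as a product over the $q+r$ columns of $M$, and similarly $[N]$ decomposes as a product over the $q+1$ columns of $N$. The first $q$ columns are identical in $M$ and $N$, so the sum over their joint type will factor out of the comparison.

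For each of the last $r$ columns of $M$, namely $(c_j,0,\ldots,0)^T$, symmetry plus the zero entries below force the unique admissible type to be the matrix with $r_{11}=c_j$ and all other entries $0$. Every binomial in the formula for $K_r$ is $\binom{\cdot}{0}$ and every diagonal double factorial past the first equals $0!!=1$, so the associated Kreweras weight is exactly $c_j!!$. The analogous computation for the single last column $(\Sigma c_j,0,\ldots,0)^T$ of $N$ gives weight $(\Sigma c_j)!!$. Thus, fixing any choice $r'$ of types on the first $q$ columns, the $M$-sum acquires the extra factor $\prod_j c_j!!$ and the $N$-sum acquires the extra factor $(\Sigma c_j)!!$.

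It remains to check that the elementary matrices attached to corresponding type choices are identical. Recall that $E_r(\cdot)$ depends only on the aggregated numbers $R_{kl}=\sum_j r^j_{kl}$. The contribution of the last $r$ columns of $M$ to $R_{11}$ equals $c_1+\cdots+c_r$, and to every other $R_{kl}$ it is zero; the single last column of $N$ contributes $\Sigma c_j$ to $R_{11}$ and zero elsewhere. Hence, given the same $r'$ on the first $q$ columns, the full $R$-matrices agree, and so $E_r(M)=E_{r'}(N)$ as matrices. Therefore
$$I(M)=\Bigl(\prod_j c_j!!\Bigr)\sum_{r'}K_{r'}\,I(E_{r'}(N)),\qquad I(N)=(\Sigma c_j)!!\sum_{r'}K_{r'}\,I(E_{r'}(N)),$$
and dividing yields the stated ratio.

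I expect the only real bookkeeping hurdle to be keeping the column indexing straight between $M$ and $N$ and verifying carefully that the $r$ extra columns contribute only to $R_{11}$; once that is pinned down, everything factors and the proposition drops out of a single line of cancellation. No further combinatorics or analysis on $O_n$ is required beyond Proposition 2.3.
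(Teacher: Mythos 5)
Your proof is correct and follows essentially the same route as the paper: expand both sides via the elementary expansion formula, observe that the type on each $c$-column is forced to be $\mathrm{diag}(c_j,0,\ldots,0)$ with weight $c_j!!$ (respectively $(\Sigma c_j)!!$ for the compressed column), and note that the resulting elementary matrices depend only on the aggregate $R_{11}=\Sigma c_j$ and hence coincide. The only cosmetic difference is that the paper names the common inner sum $J_{r,\Sigma c_j}$ before comparing the two expansions, whereas you compare them directly.
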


\begin{proof}
Consider the elementary expansion formula for the matrix in the statement. By splitting apart the indices corresponding to the $c$ columns, we get:
\begin{eqnarray*}
I\begin{pmatrix}a&c\\ b&0\end{pmatrix}
&=&\sum_{r\in[^a_b\!\!{\ }^c_0]}K_r\begin{pmatrix}a&c\\ b&0\end{pmatrix}I\left(E_r\begin{pmatrix}a&c\\ b&0\end{pmatrix}\right)\\
&=&\sum_{r\in[^a_b]}\sum_{s\in[^c_0]}K_r\begin{pmatrix}a\\ b\end{pmatrix}K_s\begin{pmatrix}c\\ 0\end{pmatrix}I\left(E_{rs}\begin{pmatrix}a&c\\ b&0\end{pmatrix}\right).
\end{eqnarray*}
Let us look now at the indices $s$. These are by definition the $r$-tuples $s=(s^1,\ldots,s^r)$ with $s^j\in [^{c_j}_{\,0}]$ for any $i$. Now since there is obviously ``no combinatorics'' when trying to pair $c_j+0+\ldots+0$ elements, the only allowed values are $s^j={\rm diag}(c_j,0,\ldots,0)$, and the corresponding coefficients are $K_{s_j}(^{c_j}_{\,0})=c_j!!$. So, with this value of $s=(s^j)$, we get:
$$I\begin{pmatrix}a&c\\ b&0\end{pmatrix}=\prod_jc_j!!\sum_{r\in[^a_b]}K_r\begin{pmatrix}a\\ b\end{pmatrix}I\left(E_{rs}\begin{pmatrix}a&c\\ b&0\end{pmatrix}\right).$$
Let us look now at the matrix on the right. According to the definitions, this matrix is obtained from $E_r(^a_b)$ by ``fattening'' it with the $S_{kl}=\Sigma_js^j_{kl}$ parameters. Now since these parameters are simply given by $S={\rm diag}(\Sigma c_j,0,\ldots,0)$, we conclude that the elementary matrix on the right depends only on $\Sigma c_j$, so we have a formula of the following type:
$$I\begin{pmatrix}a&c\\ b&0\end{pmatrix}=\prod_jc_j!!\sum_{r\in[^a_b]}K_r\begin{pmatrix}a\\ b\end{pmatrix}J_{r,\Sigma c_j}\begin{pmatrix}a\\ b\end{pmatrix}.$$
We can apply now this formula with $\Sigma c_j$ at the place of $c$, and we get:
$$I\begin{pmatrix}a&\Sigma c_j\\ b&0\end{pmatrix}=\left(\sum_jc_j\right)!!\sum_{r\in[^a_b]}K_r\begin{pmatrix}a\\ b\end{pmatrix}J_{r,\Sigma c_j}\begin{pmatrix}a\\ b\end{pmatrix}.$$
Now by comparing the above two formulae we get the formula in the statement.
\end{proof}

The above results suggest the following normalization.

\begin{definition}
We make the normalization
$$I(a)=\frac{(n-1)!!\prod a_{ij}!!}{(\Sigma a_{ij}+n-1)!!}\,J(a),$$
where $I$ denotes as usual the polynomial integration over $O_n$.
\end{definition}

Let us rewrite the results that we have so far in terms of this new quantity.

\begin{proposition}
The function $J$ has the following properties:
\begin{enumerate}
\item Invariance under row and column permutation.

\item Invariance under matrix transposition.

\item Invariance under horizontal and vertical compression.

\item Triviality on the row and column matrices.
\end{enumerate}
\end{proposition}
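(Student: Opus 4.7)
The plan is to verify each of the four properties in turn by combining the compression formula (Proposition 2.4), Proposition 1.6, and the underlying Haar-measure invariance of $I$, keeping careful track of how the normalization factor $\frac{(n-1)!!\prod a_{ij}!!}{(\Sigma a_{ij}+n-1)!!}$ transforms in each case.

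For (1) and (2), I would argue directly on $I$ and then observe that the normalization is manifestly invariant. Permuting two rows of $a$ corresponds to multiplying $u$ on the left by the corresponding permutation matrix, which preserves the Haar measure on $O_n$ since permutation matrices lie in $O_n$; similarly for columns (multiplying on the right). Hence $I$, and therefore $J$, is invariant. Transposition of $a$ corresponds to replacing $u$ by $u^T$, which also preserves Haar measure since $O_n$ is closed under transposition. In all three cases the normalization factor depends only on $n$, on $\Sigma a_{ij}$, and on the multiset of entries $\{a_{ij}\}$, all of which are preserved, so the same invariance passes to $J$.

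For (3), the point is to read Proposition 2.4 as an identity $J(\text{left}) = J(\text{right})$. I would expand the normalization factors on both sides of the compression formula
$$I\begin{pmatrix}a&c\\ b&0\end{pmatrix}=\frac{\prod c_j!!}{(\Sigma c_j)!!}\,I\begin{pmatrix}a&\Sigma c_j\\ b&0\end{pmatrix}.$$
The total sum of entries is the same on both sides, so the $(n-1)!!$ and $(\Sigma a_{ij}+n-1)!!$ factors match; the $\prod a_{ij}!!$ factor on the left contains $\prod c_j!!$, while on the right it contains only $(\Sigma c_j)!!$. The ratio of the two normalization constants is therefore exactly $\prod c_j!! / (\Sigma c_j)!!$, which is the factor appearing in the compression formula. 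So the formula reads $J(\text{left}) = J(\text{right})$, giving vertical compression (since the zero row is hit by $c$). Horizontal compression then follows by combining with (2), and the general case by first using (1) to move the relevant row/column into the position required by Proposition~2.4.

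For (4), I would invoke Proposition 1.6, which computes $I(a) = \varepsilon \cdot \frac{(n-1)!!\, a_1!!\cdots a_q!!}{(\Sigma a_i + n-1)!!}$ for $a\in\mathbb{N}^q$ seen as a row matrix. Comparing with the normalization in Definition 2.6 gives $J(a) = \varepsilon$, which equals $1$ in the admissible case; by (2), the same holds for column matrices. The only step that is not completely formal is the careful bookkeeping in (3), and in particular the verification that the compression formula really only produces the factor $\prod c_j!!/(\Sigma c_j)!!$ after normalization, but this is pure arithmetic with double factorials rather than a genuine obstacle.
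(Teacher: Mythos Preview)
Your proof is correct and follows the same route as the paper: (1) and (2) come from Haar invariance and the symmetry of the normalization factor, (3) is Proposition~2.4 rewritten in terms of $J$, and (4) is Proposition~1.6, with transposition used to pass between rows and columns. One small slip: in (3) the formula of Proposition~2.4 collapses the $r$ columns carrying $c$ (with zeros underneath) into a single column, so it is the \emph{horizontal} compression that it gives directly, and the vertical one then follows from (2); your labeling is reversed, but the argument itself is fine.
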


\begin{proof}
The assertions (1) and (2) are clear from definitions, and (3), (4) are just reformulations of Proposition 2.4 and Proposition 1.6, by using the invariance under transposition. The ``triviality'' condition in (4) means of course that we have $J\in\{0,1\}$. 
\end{proof}

We let $K_r'(a)=K_r(a)/(\prod a_i!!)$ for vectors, and $K_r'(a)=\prod K_r'(a_j)$ in general.

\begin{theorem}
We have the ``compressed elementary expansion'' formula
$$J(a)=\sum_{r\in[a]}K_r'(a)J(E_r'(a)),$$
where the matrix $E_r'(a)$ is constructed as follows:
\begin{enumerate}
\item There are $p$ rows, and the columns are block-indexed by $(k,l)$ with $k\leq l$.

\item On the $(k,l)$-th column $(k<l)$ we have an $1^{R_{kl}}$ block at rows $k$ and $l$.

\item On the $(i,i)$-th column we have an $R_{ii}$ entry at row $i$.

\item The other entries are all equal to $0$.
\end{enumerate}
\end{theorem}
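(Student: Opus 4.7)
The plan is to derive this directly from the unnormalized elementary expansion of Proposition 2.3, by substituting the $I \leftrightarrow J$ conversion of Definition 2.5 on both sides and then invoking the compression invariance from Proposition 2.5(3).

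First, I would analyze $I(E_r(a))$ using the normalization. Since every entry of $E_r(a)$ is $0$, $1$, or $2$, and $0!! = 1!! = 2!! = 1$, the double-factorial product $\prod (E_r(a))_{ij}!!$ collapses to $1$. Moreover, summing the row sums of $E_r(a)$ (each $(k,l)$ block with $k<l$ contributes $R_{kl}$ to rows $k$ and $l$, and each diagonal $(i,i)$ block contributes $2 \cdot R_{ii}/2 = R_{ii}$ to row $i$) gives $\sum_i A_i = \sum_{ij} a_{ij} = 2k$, independently of $r$. Consequently Definition 2.5 gives
$$I(E_r(a)) = \frac{(n-1)!!}{(2k+n-1)!!}\,J(E_r(a)), \qquad I(a) = \frac{(n-1)!! \prod a_{ij}!!}{(2k+n-1)!!}\,J(a).$$

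Second, I would pass from $J(E_r(a))$ to $J(E_r'(a))$. The only difference between $E_r(a)$ and $E_r'(a)$ lies in the diagonal blocks: $E_r(a)$ has an $R_{ii}/2$-fold block of columns each carrying a single $2$ in row $i$ (and zeros elsewhere), whereas $E_r'(a)$ replaces this block by one column with a single entry $R_{ii}$ in row $i$. After permuting row $i$ to the top (harmless by Proposition 2.5(1)), this is exactly the situation of Proposition 2.4, hence by the horizontal compression invariance of Proposition 2.5(3) we have $J(E_r(a)) = J(E_r'(a))$, applied iteratively for $i = 1, \ldots, p$.

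Substituting into the elementary expansion and cancelling the common factor $(n-1)!!/(2k+n-1)!!$ yields
$$\prod_{ij} a_{ij}!! \cdot J(a) = \sum_{r \in [a]} K_r(a)\, J(E_r'(a)).$$
Since $K_r(a) = \prod_j K_{r^j}(a_j)$ and $\prod_{ij} a_{ij}!! = \prod_j \prod_i a_{ij}!!$, dividing both sides by $\prod a_{ij}!!$ realizes the definition $K_r'(a) = K_r(a)/\prod a_{ij}!!$ and produces exactly the claimed formula. The whole argument is bookkeeping once the two facts ``$E_r(a)$ has all factorials trivial and mass $2k$'' and ``$E_r(a) \leadsto E_r'(a)$ is a string of horizontal compressions'' are in place; the only mildly delicate point is checking that those two facts hold uniformly for every $r \in [a]$, so that the denominator $(2k+n-1)!!$ factors out of the entire sum and cancellation goes through.
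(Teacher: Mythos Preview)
Your argument is correct and follows essentially the same route as the paper's own proof: both start from the unnormalized elementary expansion (Proposition 2.3), observe that the entries of $E_r(a)$ are all $0$, $1$, or $2$ so the double-factorial product is trivially $1$ while the total mass stays at $\Sigma a_{ij}$, convert to $J$-quantities via Definition 2.5, and then invoke compression to replace each $E_r(a)$ by $E_r'(a)$. One small correction: the properties of $J$ you cite as ``Proposition 2.5(1)'' and ``Proposition 2.5(3)'' are in fact Proposition 2.6(1) and 2.6(3) in the paper's numbering (Definition 2.5 is the normalization itself).
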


\begin{proof}
This is just a reformulation of Proposition 2.2, by using the compression formula. Indeed, consider the elementary expansion formula for the matrix in the above statement:
$$I(a)=\sum_{r\in[a]}K_r(a)I(E_r(a)).$$
If we look at the definition of $E_r(a)$, we see that for each of these matrices the quantity $\Sigma a_{ij}$ is the same as for $a$, and the quantity $\prod a_{ij}$ is simply equal to $1$. Thus when translating the above formula in terms of $J$ quantities, the only thing that happens is that the right term gets divided by $\prod a_{ij}!!$. This normalization factor can be included into the quantities $K_r(a)$, and gives rise to modified quantities $K'_r(a)$ which appear in the above statement:
$$J(a)=\sum_{r\in[a]}K_r'(a)J(E_r(a)).$$
Let us look now at the matrices $E_r(a)$ appearing on the right. By using the compression formula for each of them, on all rows, we have $J(E_r(a))=J(E_r'(a))$, and this finishes the proof.
\end{proof}

We discuss now some statements which reformulate and generalize the results that are left, namely Proposition 1.5 and Proposition 1.6. We use the following notation.

\begin{definition}
To any $a_i,b_i\in\mathbb N$ and $n\in\mathbb N$ we associate the following quantity:
$$\left[\frac{a_1\ldots a_k}{b_1\ldots b_s}\right]=\frac{(a_1+n-2)!!\ldots (a_k+n-2)!!}{(b_1+n-2)!!\ldots(b_s+n-2)!!}.$$
This quantity will be called ``balanced'' if $k=s$ and $\Sigma a_i=\Sigma b_i$.
\end{definition}

We already know that we have $J=\varepsilon$ on one-row vectors, and this reformulates Proposition 1.6. The following statement is a key generalization of this fact.

\begin{proposition}
We have the ``cross formula''
$$J\begin{pmatrix}0&b_1&0\\ c_1&a&c_2\\ 0&b_2&0\end{pmatrix}
=\varepsilon\left[\frac{0(B+C)}{BC}\right],$$
where $B=\Sigma b_i,C=\Sigma c_i$, and $\varepsilon=1$ if all entries are even, and $\varepsilon=0$ otherwise.
\end{proposition}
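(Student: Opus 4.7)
First I handle parity: if any of $a,b_1,b_2,c_1,c_2$ is odd then some row or column sum of the cross matrix is odd, so $I=0$ by Proposition 1.3 and the formula holds trivially with $\varepsilon=0$. Assume from now on that all entries are even, and set $B=b_1+b_2$, $C=c_1+c_2$. The plan is first to reduce the $3\times 3$ cross to a $2\times 2$ matrix via the invariances of $J$ collected in Proposition 2.6, and then to evaluate the resulting integral by iterated conditioning on spheres.

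For the reduction, a row permutation bringing the middle row of the cross to the top, followed by a column permutation placing the entry $a$ in the top-left, produces a matrix whose two $c_i$-columns each contain only one nonzero entry (in row 1). Horizontal compression then merges them into a single column with entry $C$. Transposing the resulting matrix and repeating the procedure for the two $b_i$-columns yields
$$J\begin{pmatrix}0 & b_1 & 0\\ c_1 & a & c_2\\ 0 & b_2 & 0\end{pmatrix}=J\begin{pmatrix}a & B\\ C & 0\end{pmatrix},$$
so the task reduces to computing this $2\times 2$ form.

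To compute $I\begin{pmatrix}a & B\\ C & 0\end{pmatrix}=\int_{O_n}u_{11}^a u_{12}^B u_{21}^C\,du$ I condition twice. The first column $v=(u_{11},\ldots,u_{n1})$ of $u\in O_n$ is uniform on $S^{n-1}$; conditionally on $v$, the second column is uniform on the sphere in $v^\perp$. Choosing an orthonormal basis of $v^\perp$ whose first vector is $(e_1-v_1v)/\sqrt{1-v_1^2}$, one gets $u_{12}=y_1\sqrt{1-u_{11}^2}$, where $y_1$ is the first coordinate of an independent uniform point on $S^{n-2}$, and Proposition 1.6 evaluates $E[y_1^B]$. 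A second conditioning on $u_{11}$, using that $(v_2,\ldots,v_n)/\sqrt{1-v_1^2}$ is uniform on $S^{n-2}$, splits off an analogous $E[z_1^C]$ factor and leaves the one-variable sphere integral $E[u_{11}^a(1-u_{11}^2)^{(B+C)/2};S^{n-1}]$. For this last integral, I use that multiplying the $S^{n-1}$-density $(1-t^2)^{(n-3)/2}$ by $(1-t^2)^{(B+C)/2}$ gives, up to rescaling, the $S^{n+B+C-1}$-density; this factorizes the integral as $(c_{n+B+C}/c_n)\,E[u_{11}^a;S^{n+B+C-1}]$ with $c_m=\int_{-1}^{1}(1-t^2)^{(m-3)/2}\,dt$, the second factor again being Proposition 1.6 on $S^{n+B+C-1}$.

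Assembling the three sphere moments and dividing by the normalization of Definition 2.5 produces $\varepsilon[0(B+C)/BC]$; the $a$-dependence cancels between $E[u_{11}^a;S^{n+B+C-1}]$ and the $a!!/(a+B+C+n-1)!!$ appearing in the normalization, explaining why the right-hand side does not see $a$. The main technical hurdle is the half-integer Beta quotient $c_{n+B+C}/c_n$, which must be translated into the paper's convention $m!!=(m-1)(m-3)\ldots$: one checks that $c_{n+B+C}/c_n=(n-1)!!(B+C+n-2)!!/[(n-2)!!(B+C+n-1)!!]$, after which all remaining simplifications are routine factorial bookkeeping.
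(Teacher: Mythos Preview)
Your proof is correct. The reduction to $J\begin{pmatrix}a&B\\ C&0\end{pmatrix}$ via permutations, compression, and transposition is exactly the paper's first step. Where you diverge is in evaluating this $2\times 2$ quantity: the paper proceeds by induction on $c$ with step $2$, inserting the relation $\sum_i u_{1i}^2=1$ into the integral to obtain the recurrence
\[
(a+b+c+n)\,J\begin{pmatrix}a&c\\ b&0\end{pmatrix}=(c+n-1)\,J\begin{pmatrix}a&c+2\\ b&0\end{pmatrix}+(a+1)\,J\begin{pmatrix}a+2&c\\ b&0\end{pmatrix},
\]
with the one-row formula supplying the base case. You instead compute $I\begin{pmatrix}a&B\\ C&0\end{pmatrix}$ directly, by conditioning first on the whole first column and then on $u_{11}$, which factors the integral into three spherical moments handled by Proposition~1.6 and a Beta quotient.

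Both arguments are short; yours is more geometric and yields the closed form in one pass, without an inductive hypothesis to verify. The paper's inductive method has the advantage of being the same device reused later (the $\sum_i u_{ki}^2=1$ trick drives the three-row recurrence in Theorem~6.3), so it integrates more smoothly with the surrounding machinery. Your computation of $c_{n+B+C}/c_n$ in the paper's shifted double-factorial convention is correct, and the cancellation of the $a$-dependence against the normalization factor is exactly as you describe.
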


\begin{proof}
By doing two compressions, it is enough to check the following formula:
$$J\begin{pmatrix}a&c\\ b&0\end{pmatrix}
=\varepsilon\left[\frac{0(b+c)}{bc}\right].$$
We proceed by induction, with step 2. At $c=0,1$ the assertion follows from the 1-row formula. In general, by using the trivial identity $\Sigma u_{1i}^2=1$, we get:
$$I\begin{pmatrix}a&c\\ b&0\end{pmatrix}
=I\begin{pmatrix}a+2&c\\ b&0\end{pmatrix}
+I\begin{pmatrix}a&c+2\\ b&0\end{pmatrix}
+(n-2)I\begin{pmatrix}a&c&2\\ b&0&0\end{pmatrix}.$$
In terms of $J$ quantities, and after doing a compression, we get:
$$(a+b+c+n)J\begin{pmatrix}a&c\\ b&0\end{pmatrix}
=(c+n-1)J\begin{pmatrix}a&c+2\\ b&0\end{pmatrix}
+(a+1)J\begin{pmatrix}a+2&c\\ b&0\end{pmatrix}.$$
Now since by induction the left and right $J$ quantities are equal, we get:
$$J\begin{pmatrix}a&c+2\\ b&0\end{pmatrix}
=\frac{b+c+n-1}{c+n-1}J\begin{pmatrix}a&c\\ b&0\end{pmatrix}.$$
Thus the formula to be proved is true at $c+2$, which finishes the proof.
\end{proof}

Finally, Proposition 1.5 has the following reformulation.

\begin{proposition}
At $n=2$ we have the formula
$$J\begin{pmatrix}a&c\\ b&d\end{pmatrix}
=\varepsilon\left[\frac{0^2(a+d)(b+c)}{abcd}\right]$$
where $\varepsilon=1$ if $a,b,c,d$ are even, $\varepsilon=-1$ if $a,b,c,d$ are odd, and $\varepsilon=0$ otherwise. 
\end{proposition}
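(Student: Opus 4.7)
The plan is to derive this directly from Proposition 1.5 by unpacking Definition 2.6 at $n=2$. This should be essentially a mechanical reformulation: no induction, no combinatorics, no new use of the Weingarten formula.

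First I would substitute $n=2$ into the normalization of Definition 2.6. Since $(n-1)!! = 1!! = 1$, the defining relation becomes
$$I\begin{pmatrix}a&c\\ b&d\end{pmatrix} = \frac{a!!\,b!!\,c!!\,d!!}{(a+b+c+d+1)!!}\,J\begin{pmatrix}a&c\\ b&d\end{pmatrix}.$$
Setting this equal to the value from Proposition 1.5 and cancelling the common denominator $(a+b+c+d+1)!!$, one immediately gets
$$J\begin{pmatrix}a&c\\ b&d\end{pmatrix} = \varepsilon\cdot\frac{(a+d)!!(b+c)!!}{a!!\,b!!\,c!!\,d!!},$$
with $\varepsilon$ inherited verbatim from Proposition 1.5 (so $\pm 1$ in the all-even and all-odd cases, $0$ otherwise).

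Next I would match this against the bracket notation of Definition 2.9 at $n=2$. With $n-2=0$, we have $(m+n-2)!! = m!!$, and in particular $0!! = 1$ (the empty product), so
$$\left[\frac{0^2(a+d)(b+c)}{abcd}\right] = \frac{0!!\,0!!\,(a+d)!!(b+c)!!}{a!!\,b!!\,c!!\,d!!} = \frac{(a+d)!!(b+c)!!}{a!!\,b!!\,c!!\,d!!}.$$
Combining these two displayed identities gives the stated formula.

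There is no real obstacle here: the only thing to be careful about is the bookkeeping of the sign $\varepsilon$ and the fact that the bracket is genuinely balanced (four numerator terms, four denominator terms, with $0+0+(a+d)+(b+c) = a+b+c+d$), which is precisely what makes Definition 2.9's balance condition compatible with the normalization. Everything else is a one-line substitution.
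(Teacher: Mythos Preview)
Your proof is correct and follows exactly the paper's approach; the paper's own proof is in fact a single sentence stating that this is ``just a reformulation of Proposition 1.5,'' and you have spelled out that reformulation carefully. One minor bookkeeping point: your references are off by one---the normalization of $J$ is Definition~2.5 and the bracket notation is Definition~2.8.
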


\begin{proof}
This is indeed just a reformulation of Proposition 1.5.
\end{proof}

\section{The two-row case}

In this section we restrict attention to the two-row case, and we extend some technical results from \cite{int}, from the ``even'' case $a\in M_{2\times q}(2\mathbb N)$ considered there, to the general case $a\in M_{2\times q}(\mathbb N)$. These extensions will be of great use in Sections 4-5 below.

We first replace Definition 2.2 by a simpler definition, adapted to the two-row case. 

\begin{definition}
We let $[a,b]=\{m,m-2,m-4,\ldots\}$, with the set stopping at $0$ or $1$, where $m=\min(a,b)$. If $a,b$ do not have the same parity, we set $[a,b]=\emptyset$.
\end{definition}

Observe that this definition agrees with Definition 2.2 in the $2\times 1$ case. Indeed, in both situations an element $r\in [a,b]$ is just the type of a pairing of $a+b$ elements. For more explanations here, we refer the reader to the proof of Proposition 3.2 below.

\begin{proposition}
We have the ``two-row elementary expansion'' formula
$$J\begin{pmatrix}a\\ b\end{pmatrix}
=\sum_{r_1\ldots r_q}K_r'\begin{pmatrix}a\\ b\end{pmatrix}
J\begin{pmatrix}1^R&A-R&0\\ 1^R&0&B-R\end{pmatrix},$$
where the sum is over $r_j\in [a_j,b_j]$, and $A=\Sigma a_i,B=\Sigma b_i,R=\Sigma r_i$.
\end{proposition}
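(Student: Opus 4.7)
My plan is to obtain this as a direct specialization of the compressed elementary expansion formula in Theorem 2.7 to the case $p=2$, so the main task is to identify the indexing set $[a]$ and the associated elementary matrix $E_r'(a)$ in the two-row setting.

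First I would unpack Definition 2.2 for a single column $\binom{a_j}{b_j}\in M_{2\times 1}(\mathbb N)$. An element $r^j\in[\binom{a_j}{b_j}]$ is a symmetric $2\times2$ matrix, even on the diagonal, whose row/column sums are $a_j$ and $b_j$. Setting $r_j:=r^j_{12}$, the entire matrix $r^j$ is determined by $r_j$ via $r^j_{11}=a_j-r_j$ and $r^j_{22}=b_j-r_j$. The constraint that the diagonal entries be non-negative and even forces $a_j,b_j$ to have the same parity and $0\le r_j\le\min(a_j,b_j)$ with $r_j$ of that common parity. This is precisely Definition 3.1: the admissible values are $r_j\in[a_j,b_j]$. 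Consequently $[a]=\prod_j[a_j,b_j]$, matching the sum $\sum_{r_1,\ldots,r_q}$ in the statement. A short computation shows that $K'_{r^j}\binom{a_j}{b_j}$ coincides with the $j$-th factor of $K'_r\binom{a}{b}$, so the coefficients match after taking the product over $j$.

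Next I would compute the elementary matrix $E_r'\binom{a}{b}$ from the recipe in Theorem 2.7. With $p=2$, the columns are indexed by the three pairs $(1,1),(1,2),(2,2)$, and the block sums are
\[
R_{12}=\sum_j r_j=R,\qquad R_{11}=\sum_j(a_j-r_j)=A-R,\qquad R_{22}=\sum_j(b_j-r_j)=B-R.
\]
Reading off the three rules: on column $(1,2)$ we place a $1^R$ block in both rows, on column $(1,1)$ an entry $A-R$ in row $1$, and on column $(2,2)$ an entry $B-R$ in row $2$, with zeros elsewhere. Writing this out gives
\[
E_r'\begin{pmatrix}a\\ b\end{pmatrix}=\begin{pmatrix}A-R & 1^R & 0\\ 0 & 1^R & B-R\end{pmatrix}.
\]
By the invariance of $J$ under column permutation (Proposition 2.6(1)), this equals $J$ of the matrix $\bigl(\begin{smallmatrix}1^R & A-R & 0\\ 1^R & 0 & B-R\end{smallmatrix}\bigr)$ appearing in the statement.

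Substituting these two computations into Theorem 2.7 yields the claimed formula. There is no real obstacle: everything is bookkeeping once Definition 2.2 is specialized to symmetric $2\times 2$ matrices, so the only mild subtlety to watch is the parity constraint (which is automatically encoded in $[a_j,b_j]=\emptyset$ when $a_j,b_j$ have different parities, making the corresponding term vanish in agreement with the integral vanishing in Proposition 1.3).
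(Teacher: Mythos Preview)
Your proposal is correct and follows essentially the same route as the paper: apply Theorem 2.7 with $p=2$, identify the symmetric $2\times 2$ matrices $r^j$ with their off-diagonal entry $r_j\in[a_j,b_j]$ via $r^j=\bigl(\begin{smallmatrix}a_j-r_j&r_j\\ r_j&b_j-r_j\end{smallmatrix}\bigr)$, and read off $E_r'\binom{a}{b}$ from the recipe. Your version is slightly more explicit about the parity constraint and about invoking column-permutation invariance for the final reordering, but there is no substantive difference.
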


\begin{proof}
We apply Theorem 2.7 to the matrix in the statement:
$$J\begin{pmatrix}a\\ b\end{pmatrix}
=\sum_rK_r'\begin{pmatrix}a\\ b\end{pmatrix}
J\left(E_r'\begin{pmatrix}a\\ b\end{pmatrix}\right).$$
According to the above new conventions, in this formula the indices range indeed as stated, $r_j\in[a_j,b_j]$. In order to compute the matrix on the right, let us observe that in terms of the new indices $r_j\in[a_j,b_j]$, the matrix indices in Definition 2.2 are:
$$r_j^{old}=\begin{pmatrix}a_j-r_j&r_j\\ r_j&b_j-r_j\end{pmatrix}.$$
By plugging these values into the definition of $E_r'$, we obtain:
$$E_r'\begin{pmatrix}a\\ b\end{pmatrix}
=\begin{pmatrix}1^R&A-R&0\\ 1^R&0&B-R\end{pmatrix}.$$
This gives the formula in the statement.
\end{proof}

We denote by $A,B,C,\ldots$ the sum of entries of our various matrices $a,b,c,\ldots$

\begin{theorem}
The function $F$ defined on two-row matrices by
$$F\begin{pmatrix}a\\ b\end{pmatrix}
=\left[\frac{AB}{0(A+B)}\right]J\begin{pmatrix}a\\ b\end{pmatrix}$$
satisfies the ``flipping formula'' $F(^a_b{\ }^c_d)=F(^a_b{\ }^d_c)$ for any vectors $a,b,c,d$.
\end{theorem}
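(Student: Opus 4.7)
The plan is to use Proposition 3.2 to reduce the flipping identity to an invariance property of $F$ on the elementary family $M(T,\alpha,\beta):=\binom{1^T\ \alpha\ 0}{1^T\ 0\ \beta}$, and then to establish that invariance by induction on $T$.

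First I would apply Proposition 3.2 to both $J\binom{a\ c}{b\ d}$ and $J\binom{a\ d}{b\ c}$. Because the defining formula for $K'$ is manifestly symmetric in the two rows of its argument, so that $K'_s\binom{d}{c}=K'_s\binom{c}{d}$, and because $[d_j,c_j]=[c_j,d_j]$ as sets, the two resulting sums
$$J\binom{a\ c}{b\ d}=\sum_{r,s}K'_r\binom{a}{b} K'_s\binom{c}{d}\,J(M(T,A+C-T,B+D-T))$$
and
$$J\binom{a\ d}{b\ c}=\sum_{r,s}K'_r\binom{a}{b} K'_s\binom{c}{d}\,J(M(T,A+D-T,B+C-T))$$
(with $T=R+S$) run over the same index set $[a,b]\times[c,d]$ with identical coefficients, differing only in the elementary matrix on the right. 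Since $M(T,\alpha,\beta)$ has row sums $T+\alpha$ and $T+\beta$, the prefactor $\left[\frac{(A+C)(B+D)}{0(A+B+C+D)}\right]$ in the definition of $F\binom{a\ c}{b\ d}$ coincides with the $F$-prefactor of each summand elementary matrix and can be pushed inside, giving
$$F\binom{a\ c}{b\ d}=\sum_{r,s}K'_r K'_s\,F(M(T,A+C-T,B+D-T))$$
and analogously for the flipped matrix. The flipping identity thus reduces to the key claim that $F(M(T,\alpha,\beta))$ depends only on $T$ and $\alpha+\beta$ (the latter being $A+B+C+D-2T$ in both sums).

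For the key claim, the symmetry $F(M(T,\alpha,\beta))=F(M(T,\beta,\alpha))$ from row-permutation invariance reduces the problem to the step $F(M(T,\alpha,\beta))=F(M(T,\alpha+2,\beta-2))$ for $\beta\ge 2$, which I would prove by induction on $T$. The base case $T=0$ follows from the cross formula of Proposition 2.10, which gives $J(M(0,\alpha,\beta))=\varepsilon\left[\frac{0(\alpha+\beta)}{\alpha\beta}\right]$; the required ratio $(\beta+n-3)/(\alpha+n-1)$ then drops out of the identity $m!!/(m-2)!!=m-1$ from the paper's convention. For the inductive step, I would apply Proposition 3.2 to the auxiliary matrices $P=\binom{1^T\ \alpha-2\ 2}{1^T\ 0\ \beta}$ and $Q=\binom{1^T\ \alpha\ 0}{1^T\ 2\ \beta-2}$, both having the same row sums as $M(T,\alpha,\beta)$; expansion on the ``mixed'' column (where $r\in\{0,2\}$) yields
$$J(P)=J(M(T,\alpha,\beta))+\beta\,J(M(T+2,\alpha-2,\beta-2)),$$
$$J(Q)=J(M(T,\alpha,\beta))+\alpha\,J(M(T+2,\alpha-2,\beta-2)).$$
Shifting $(\alpha,\beta)\mapsto(\alpha+2,\beta-2)$ produces two further analogous relations involving $J(M(T,\alpha+2,\beta-2))$ and $J(M(T+2,\alpha,\beta-4))$. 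The inductive hypothesis at $T+2$ equates $F(M(T+2,\alpha-2,\beta-2))$ with $F(M(T+2,\alpha,\beta-4))$, equivalently fixing the ratio of those two $J$-values. Combining these four relations with the compression formula (Proposition 2.4) to handle the remaining auxiliary integrals should close the system and pin down $J(M(T,\alpha+2,\beta-2))/J(M(T,\alpha,\beta))=(T+\beta+n-3)/(T+\alpha+n-1)$, which is exactly the compensation demanded by the change of $F$-prefactor.

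The hard part will be the algebraic bookkeeping in the inductive step: the system of linear relations between various $J$-values at levels $T$ and $T+2$ must be manipulated with careful attention to the paper's double-factorial convention in order to isolate the target ratio. A cleaner alternative --- worth exploring in parallel --- would be to derive a closed-form expression for $F(M(T,\alpha,\beta))$ depending only on $T$, $\alpha+\beta$, and $n$; from such a formula the invariance would be manifest.
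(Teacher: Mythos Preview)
Your reduction via Proposition 3.2 to the key claim that $F(M(T,\alpha,\beta))$ depends only on $T$ and $\alpha+\beta$ is correct and matches the paper exactly. The gap is in your inductive step.

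First, the induction is not coherently set up. You declare the base case to be $T=0$ (so the induction on $T$ should go \emph{upward}), but then you invoke ``the inductive hypothesis at $T+2$'', which is the wrong direction. If instead you meant a downward induction (or an induction on $\alpha+\beta$), then $T=0$ is not the base case.

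Second, and more seriously, the linear system you write down does not close. Your four relations introduce the auxiliary values $J(P),J(Q),J(P'),J(Q')$, none of which lies in the family $M(\cdot,\cdot,\cdot)$. You assert that ``the compression formula \ldots\ should close the system'', but Proposition 2.4 only compresses blocks of the form $\binom{c}{0}$, and neither $P=\binom{1^T\ \alpha-2\ 2}{1^T\ \ 0\ \ \beta}$ nor $Q$ has a pair of columns that compression can merge usefully. Even after using the hypothesis to relate the two values at level $T+2$, you have four unknown auxiliary $J$-values and no further equations. The system is underdetermined, and the target ratio cannot be extracted.

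The paper's proof supplies exactly the missing external input. Rather than working purely inside the $M$-family, it applies the elementary expansion to the two \emph{cross} matrices $\binom{a\ c}{b\ 0}$ and $\binom{a\ 0}{b\ c}$, whose $F$-values are known to be equal by Proposition 2.9 (not 2.10, which is the $n=2$ formula). Taking $b=s$ and $a\geq s$, both sides expand as sums over $r\in\{s,s-2,s-4,\ldots\}$ with identical $K'$ coefficients. By the (upward) induction hypothesis at all $r<s$, every term except the top one $r=s$ already agrees; equality of the sums then forces the $r=s$ terms to agree, which is precisely the key claim at level $s$. This is the idea you are missing: the cross-formula identity is not just the base case, it is the engine of the induction at every step.
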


\begin{proof}
We apply the elementary expansion formula to the matrices in the statement. Since the passage from $J$ to $F$ depends only on the sums on the rows, which are the same for the matrix $a$ and for its versions $E_r'(a)$, we can replace $J$ by $F$, and we get:
$$F\begin{pmatrix}a&c\\ b&d\end{pmatrix}
=\sum_{r_is_j}\prod_{ij}K_{r_i}'\begin{pmatrix}a_i\\ b_i\end{pmatrix}K_{s_j}'\begin{pmatrix}c_j\\ d_j\end{pmatrix}F\begin{pmatrix}1^{R+S}&A+C-R-S&0\\
1^{R+S}&0&B+D-R-S\end{pmatrix},$$
$$F\begin{pmatrix}a&d\\ b&c\end{pmatrix}
=\sum_{r_is_j}\prod_{ij}K_{r_i}'\begin{pmatrix}a_i\\ b_i\end{pmatrix}K_{s_j}'\begin{pmatrix}d_j\\ c_j\end{pmatrix}F\begin{pmatrix}1^{R+S}&A+D-R-S&0\\
1^{R+S}&0&B+C-R-S\end{pmatrix}.$$
Our claim is that the two formulae are in fact identical. Indeed, the first remark is that the various indices vary in the same sets. Also, since $K_r'(^a_b)$ is symmetric in $a,b$, the coefficients are the same. So, it remains to prove that the $F$ quantities on the right are equal. And here, by changing the notations, the statement is that for any $s\in\mathbb N$ and any $a,b,c,d\in\mathbb N$ satisfying $a+b=c+d$, we have:
$$F\begin{pmatrix}1^s&2a&0\\ 1^s&0&2b\end{pmatrix}
=F\begin{pmatrix}1^s&2c&0\\ 1^s&0&2d\end{pmatrix}.$$
In order to prove this result, we use the cross formula, which gives:
$$F\begin{pmatrix}a&c\\ b&0\end{pmatrix}
=\varepsilon\left[\frac{(a+c)(b+c)}{c(a+b+c)}\right].$$
We prove the result by induction over $s$. At $s=0$ the dependence on $a+b$ follows from the cross formula. So, assume that the result is true at $s\in\mathbb N$. We have:
$$F\begin{pmatrix}a&c\\ b&0\end{pmatrix}
=\sum_{r\in[a,b]}K_r'\begin{pmatrix}a\\ b\end{pmatrix}
F\begin{pmatrix}1^r&a+c-r&0\\ 1^r&0&b-r\end{pmatrix},$$
$$F\begin{pmatrix}a&0\\ b&c\end{pmatrix}
=\sum_{r\in[a,b]}K_r'\begin{pmatrix}a\\ b\end{pmatrix}
F\begin{pmatrix}1^r&a-r&0\\ 1^r&0&b+c-r\end{pmatrix}.$$
We know from the cross formula that the terms on the left, and hence the sums on the right, are equal. With $b=s$, this equality becomes:
$$\sum_{r\in[a,s]}K_r'\begin{pmatrix}a\\ s\end{pmatrix}
F\begin{pmatrix}1^r&a+c-r&0\\ 1^r&0&b-r\end{pmatrix}
=\sum_{r\in[a,s]}K_r'\begin{pmatrix}a\\ s\end{pmatrix}
F\begin{pmatrix}1^r&a-r&0\\ 1^r&0&b+c-r\end{pmatrix}.$$
Now if we choose $a\geq s$, the sums range over the set $[a,s]=\{s,s-2,s-4\ldots\}$, and by the induction assumption, all the terms starting from the second one coincide. Thus, the above equality tells us that the first terms ($r=s$) of the two sums are equal:
$$F\begin{pmatrix}1^s&a+c-s\\ 1^s&0\end{pmatrix}
=F\begin{pmatrix}1^s&a-s&0\\ 1^s&0&c\end{pmatrix}.$$
Since this equality holds for any $a\geq s$ and for any $c$, this shows that the elementary flipping formula holds at $s$, and this finishes the proof.
\end{proof}

\begin{proposition}
The quantities in the two-row elementary expansion formula are given by
$$K_r'\begin{pmatrix}a\\ b\end{pmatrix}
=\prod_j\frac{(a_j+1)!!(b_j+1)!!}{r_j!(a_j-r_j+1)!!(b_j-r_j+1)!!},$$
$$J\begin{pmatrix}1^R&A-R&0\\ 1^R&0&B-R\end{pmatrix}
=(-1)^{R/2}R!!\left[\frac{0(A+B-R)}{AB}\right],$$
where the indices run as usual, $r_j\in [a_j,b_j]$, and $A=\Sigma a_i,B=\Sigma b_i,R=\Sigma r_i$.
\end{proposition}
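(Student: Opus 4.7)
The first formula is obtained by directly unfolding Definition 2.2 in the two-row case. As recorded in the proof of Proposition 3.2, to each $r_j \in [a_j, b_j]$ there corresponds the symmetric matrix $r^j = \binom{a_j - r_j\ \ r_j}{r_j\ \ b_j - r_j}$, so Definition 2.2 reads
\[
K_{r_j}\binom{a_j}{b_j} = \binom{a_j}{r_j}\binom{b_j}{r_j}\, r_j!\, (a_j - r_j)!!\, (b_j - r_j)!!.
\]
Dividing by $a_j!!\,b_j!!$ and simplifying using the identity $m! = m!!\,(m+1)!!$, which holds in the paper's convention $m!! = (m-1)(m-3)\cdots$, produces the stated product form.

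For the second formula, my plan is to reduce everything to the single scalar $J\binom{1^R}{1^R}$. First, the key identity from inside the proof of Theorem 3.3, namely $F\binom{1^s\ a-s\ 0}{1^s\ 0\ c} = F\binom{1^s\ a+c-s}{1^s\ 0}$, converts (via the definition of $F$) our target into a bracket factor times $J\binom{1^R\ X}{1^R\ 0}$, where $X = A + B - 2R$. Next, I apply $\Sigma_k u_{1k}^2 = 1$ inside $I\binom{1^R\ X}{1^R\ 0}$, as in the proof of Proposition 2.10. The intermediate matrix $\binom{1^{R-1}\ 3\ X}{1^{R-1}\ 1\ 0}$ produced by each $k \leq R$ reduces, via the two-row elementary expansion applied to the $(3,1)^T$ column and using the just-proved first formula for $K'_r$, to a scalar multiple of $I\binom{1^R\ X+2}{1^R\ 0}$; the $k = R+1$ term is $I\binom{1^R\ X+2}{1^R\ 0}$ itself; and the $k \geq R+2$ terms produce matrices that collapse to scalar multiples of the same integral by horizontal compression. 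Summing the contributions and translating back to $J$ yields $J\binom{1^R\ X}{1^R\ 0} = J\binom{1^R\ X+2}{1^R\ 0}$, so this $J$-value is independent of $X$; taking $X = 0$ and compressing a zero column reduces everything to $J\binom{1^R}{1^R}$.

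It then remains to compute $J\binom{1^R}{1^R}$, which I would do by induction on $R$. Applying the two-row elementary expansion to the one-column matrix $\binom{R}{R}$ (for which $J = 1$) yields
\[
1 = \sum_{r = 0, 2, \ldots, R} K'_r\binom{R}{R}\, J\binom{1^r\ R-r\ 0}{1^r\ 0\ R-r}.
\]
Each summand rewrites through the flipping step and the $X$-independence established above, after which the inductive hypothesis pins down all but the $r = R$ term. Solving for $J\binom{1^R}{1^R}$ and then feeding this value back through the flipping reduction of the first step produces the claimed closed form.

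The main obstacle is the final algebraic check that the value of $J\binom{1^R}{1^R}$ extracted from the recursion equals $(-1)^{R/2}\,R!!\,(n-2)!!/(R+n-2)!!$ --- a double-factorial identity whose verification requires careful bookkeeping but no new ideas.
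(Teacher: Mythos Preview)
Your argument is correct and follows the same overall scheme the paper sketches: the first formula is a direct unfolding of Definition~2.2, and for the second you combine the flipping principle with the elementary expansion of the one-column quantities $J\binom{a}{b}=1$, exactly as the paper indicates (referring to \cite{int} for details).

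The one genuine addition in your write-up is Step~2: you prove directly, via the $\sum_k u_{1k}^2=1$ insertion, that $J\binom{1^R\ X}{1^R\ 0}$ is independent of $X$. The paper's sketch does not isolate this fact; it speaks only of ``the system of equations coming from the elementary expansion of the quantities $J(^a_b)=1$'', which in principle means varying both $a$ and $b$ and solving a triangular linear system in the unknowns $J\binom{1^r\ X}{1^r\ 0}$ for all even $r$ and $X$. Your $X$-independence collapses that two-parameter system to a clean one-parameter recursion in $R$ alone, so the final double-factorial identity you have to check is a single sum rather than a family of them. This is a worthwhile simplification, though not a different route: both arguments rest on the same two ingredients (Theorem~3.3 and Proposition~3.2), and your extra step uses only the same $\sum u_{1k}^2=1$ device already exploited in the proof of Proposition~2.9.
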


\begin{proof}
We use the notations from Proposition 3.2 and its proof. According to the identifications made in the beginning of this section, the coefficients are:
\begin{eqnarray*}
K_r'\begin{pmatrix}a\\ b\end{pmatrix}
&=&\prod_j\frac{1}{a_j!!b_j!!}\begin{pmatrix}a_j\\ r_j\end{pmatrix}\begin{pmatrix}b_j\\ r_j\end{pmatrix}r_j!(a_j-r_j)!!(b_j-r_j)!!\\
&=&\prod_j\frac{a_j!b_j!r_j!(a_j-r_j)!!(b_j-r_j)!!}{a_j!!b_j!!r_j!(a_j-r_j)!r_j!(b_j-r_j)!}\\
&=&\prod_j\frac{(a_j+1)!!(b_j+1)!!}{r_j!(a_j-r_j+1)!!(b_j-r_j+1)!!}.
\end{eqnarray*}
As for the values of $J$, these can be computed by a method presented in \cite{int}. The idea is that, by using several times the flipping principle, these values can be deduced from the system of equations coming from the elementary expansion of the quantities $J(^a_b)=1$.
\end{proof}

\begin{proposition}
We have the ``transmutation formula''
$$J_n\begin{pmatrix}a&c\\ b&0\end{pmatrix}
=\left[\frac{0(B+C)}{BC}\right]_n\,
J_{n+C}\begin{pmatrix}a\\ b\end{pmatrix}$$
valid for any vectors $a,b\in\mathbb N^q$ and $c\in\mathbb N^s$.
\end{proposition}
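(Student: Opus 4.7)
The plan is to expand both sides using the two-row elementary expansion (Proposition 3.2) and verify agreement term by term, modulo a global factor.

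First I would apply Proposition 3.2 to $J_n\bigl(^a_b\,{}^c_0\bigr)$. The summation indices are $r_j\in[a_j,b_j]$ for the first $q$ columns and $s_l\in[c_l,0]$ for the last $s$ columns. The key simplification is that $[c_l,0]$ collapses to $\{0\}$ (if some $c_l$ is odd then both sides of the claimed formula vanish trivially), and from Proposition 3.4 the corresponding coefficient is $K'_0\bigl(^{c_l}_{0}\bigr)=1$. Thus the $s$-summation disappears, leaving
$$J_n\begin{pmatrix}a&c\\ b&0\end{pmatrix}=\sum_{r_j\in[a_j,b_j]}K_r'\begin{pmatrix}a\\ b\end{pmatrix}J_n\begin{pmatrix}1^R&A+C-R&0\\ 1^R&0&B-R\end{pmatrix},$$
with $R=\Sigma r_j$, $A=\Sigma a_i$, $B=\Sigma b_i$, $C=\Sigma c_l$. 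In parallel, applying Proposition 3.2 at dimension $n+C$ to $J_{n+C}\bigl(^a_b\bigr)$ gives
$$J_{n+C}\begin{pmatrix}a\\ b\end{pmatrix}=\sum_{r_j\in[a_j,b_j]}K_r'\begin{pmatrix}a\\ b\end{pmatrix}J_{n+C}\begin{pmatrix}1^R&A-R&0\\ 1^R&0&B-R\end{pmatrix}.$$
Since the coefficients $K_r'\bigl(^a_b\bigr)$ do not involve $n$, the two expansions share index set and prefactors, so it suffices to establish the pointwise identity
$$J_n\begin{pmatrix}1^R&A+C-R&0\\ 1^R&0&B-R\end{pmatrix}=\left[\frac{0(B+C)}{BC}\right]_n J_{n+C}\begin{pmatrix}1^R&A-R&0\\ 1^R&0&B-R\end{pmatrix}$$
for every admissible $R$.

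For this pointwise step I would invoke Proposition 3.4 (with row sums $(A{+}C,B)$ on the left, and $(A,B)$ on the right, evaluated at $n+C$). Taking the ratio, the sign-and-factorial factor $(-1)^{R/2}R!!$ cancels, and the identities $(A+B+C-R)+(n-2)=(A+B-R)+(n+C-2)$ and $(A+C)+(n-2)=A+(n+C-2)$ cancel the remaining $R$-dependent double factorials. What survives is
$$\frac{(n-2)!!\,(B+C+n-2)!!}{(B+n-2)!!\,(C+n-2)!!},$$
which by Definition 2.10 is exactly $\bigl[\tfrac{0(B+C)}{BC}\bigr]_n$. Summing the pointwise identity against $K_r'\bigl(^a_b\bigr)$ then yields the transmutation formula.

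The main obstacle is not conceptual but combinatorial bookkeeping: I have to check carefully that the ``extra'' $C$-contribution produced by Proposition 3.4 on the left is exactly absorbed by the dimensional shift $n\mapsto n+C$ on the right, uniformly in the summation index $R$. Once that $R$-independence is confirmed via the two factorial identities above, the prefactor pulls out of the sum and the result is immediate. Note that this strategy closely mirrors the derivation of the compression formula (Proposition 2.4) and its elementary-expansion reformulation, suggesting the transmutation formula should be viewed as the natural ``dimensional'' analogue of compression.
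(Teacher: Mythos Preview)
Your proof is correct and follows the same route as the paper: expand both sides via Propositions~3.2 and~3.4 and check that the summands differ by the $R$-independent factor $\bigl[\tfrac{0(B+C)}{BC}\bigr]_n$. The paper presents this slightly more compactly, writing out the two expansions with Proposition~3.4 already substituted in and then comparing the bracket expressions directly, but the substance is identical.
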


\begin{proof}
According to Proposition 3.2 and Proposition 3.4, we have:
\begin{eqnarray*}
J_n\begin{pmatrix}a&c\\ b&0\end{pmatrix}
&=&\sum_rK_r'\begin{pmatrix}a\\ b\end{pmatrix}
(-1)^{R/2}R!!\left[\frac{0(A+B+C-R)}{(A+C)B}\right]_n,\\
J_{n+C}\begin{pmatrix}a\\ b\end{pmatrix}
&=&\sum_rK_r'\begin{pmatrix}a\\ b\end{pmatrix}
(-1)^{R/2}R!!\left[\frac{0(A+B-R)}{AB}\right]_{n+C}.
\end{eqnarray*}
Now observe that the quantities on the right are related by:
\begin{eqnarray*}
\left[\frac{0(A+B+C-R)}{(A+C)B}\right]_n
&=&\left[\frac{0(B+C)}{BC}\right]_n\cdot \left[\frac{C(A+B+C-R)}{(A+C)(B+C)}\right]_n\\
&=&\left[\frac{0(B+C)}{BC}\right]_n\cdot \left[\frac{0(A+B-R)}{AB}\right]_{n+C}.
\end{eqnarray*}
Since the other quantities in the above two sums are identical, this gives the result.
\end{proof}

Finally, let us record the following useful result concerning some supplementary matrices for which $I$ decomposes as a product of factorials.

\begin{proposition}
We have the ``spark formula''
$$J\begin{pmatrix}x&a&0\\ y&0&b\end{pmatrix}
=\varepsilon\left[\frac{0(a+b+x)(a+b+y)}{(a+b)(a+x)(b+y)}\right]$$
where $\varepsilon=1$ if $a,b,x,y$ are even, and $\varepsilon=0$ otherwise.
\end{proposition}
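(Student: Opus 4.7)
The plan is to reduce the spark formula to the scalar cross formula (the identity $J\begin{pmatrix}a&c\\ b&0\end{pmatrix}=\varepsilon\left[\frac{0(b+c)}{bc}\right]$ established inside the proof of Proposition 2.9) via a single application of the transmutation formula (Proposition 3.5). I may assume at the outset that $a,b,x,y$ are all even, since otherwise some column sum of the matrix $\begin{pmatrix}x&a&0\\ y&0&b\end{pmatrix}$ is odd, so $J$ vanishes and the right-hand side is $\varepsilon=0$ as well.

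First, by column-permutation invariance of $J$ (Proposition 2.6),
$$J_n\begin{pmatrix}x&a&0\\ y&0&b\end{pmatrix}=J_n\begin{pmatrix}x&0&a\\ y&b&0\end{pmatrix}.$$
The last column of the rewritten matrix has a zero in its second row, so the matrix fits the template $\begin{pmatrix}a'&c'\\ b'&0\end{pmatrix}$ of Proposition 3.5 with $a'=(x,0)$, $b'=(y,b)$ and $c'=(a)$. Applying the transmutation formula peels off this last column and yields
$$J_n\begin{pmatrix}x&0&a\\ y&b&0\end{pmatrix}=\left[\frac{0(a+b+y)}{a(b+y)}\right]_n\!J_{n+a}\begin{pmatrix}x&0\\ y&b\end{pmatrix}.$$

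Next, a row swap followed by a column swap gives
$$J_{n+a}\begin{pmatrix}x&0\\ y&b\end{pmatrix}=J_{n+a}\begin{pmatrix}y&b\\ x&0\end{pmatrix},$$
a matrix of the exact form to which the scalar cross formula from the proof of Proposition 2.9 applies, producing
$$J_{n+a}\begin{pmatrix}y&b\\ x&0\end{pmatrix}=\varepsilon\left[\frac{0(b+x)}{bx}\right]_{n+a},$$
with $\varepsilon=1$ under the standing evenness hypothesis.

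The last step is to combine the two bracket factors. Expanding the definition gives
$$\left[\frac{0(a+b+y)}{a(b+y)}\right]_n\!\!\left[\frac{0(b+x)}{bx}\right]_{n+a}=\frac{(n-2)!!\,(a+b+y+n-2)!!\,(n+a-2)!!\,(a+b+x+n-2)!!}{(a+n-2)!!\,(b+y+n-2)!!\,(b+n+a-2)!!\,(x+n+a-2)!!},$$
and the factors $(a+n-2)!!$ and $(n+a-2)!!$ cancel, leaving precisely $\left[\frac{0(a+b+x)(a+b+y)}{(a+b)(a+x)(b+y)}\right]_n$, as required. The structural reduction is transparent; the only moderately delicate point is that the $x\leftrightarrow y$ symmetry of the final expression is invisible in the intermediate steps, and emerges only from the shift $n\mapsto n+a$ in the second bracket together with the cancellation above.
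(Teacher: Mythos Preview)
Your argument is correct and takes a genuinely different route from the paper's. The paper proves the spark formula directly from the flipping principle (Theorem~3.3): it flips the second column to turn the spark $\begin{pmatrix}x&a&0\\ y&0&b\end{pmatrix}$ into the cross $\begin{pmatrix}x&0&0\\ y&a&b\end{pmatrix}$, equates the two $F$-values, and then evaluates the cross side via Proposition~2.9. You instead peel off the column $\begin{pmatrix}a\\0\end{pmatrix}$ using the transmutation formula (Proposition~3.5), landing on a $2\times 2$ cross at the shifted parameter $n+a$, and then apply the scalar cross formula there. Your path is arguably slicker in that it avoids introducing $F$, but note that Proposition~3.5 is itself proved via Proposition~3.4, whose derivation relies on the flipping principle---so logically the two proofs rest on the same foundation, just packaged differently.

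Two small points. First, in the reduction to $\begin{pmatrix}y&b\\ x&0\end{pmatrix}$ a single row swap suffices; the ``followed by a column swap'' is superfluous (though harmless). Second, your handling of the $\varepsilon=0$ case has a tiny gap: when $a,b$ are even but $x,y$ are both odd, all three \emph{column} sums are even, so your stated reason (``some column sum is odd'') does not apply. The conclusion $J=0$ still holds, but because the \emph{row} sums $x+a$ and $y+b$ are odd; just replace ``column'' by ``row or column'' and invoke Proposition~1.3 in full.
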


\begin{proof}
The case $\varepsilon=0$ is clear. In the case $\varepsilon=1$ we use the flipping principle, which tells us that the following quantities are equal:
\begin{eqnarray*}
F\begin{pmatrix}x&a&0\\ y&0&b\end{pmatrix}
&=&\left[\frac{(a+x)(b+y)}{0(a+b+x+y)}\right]
J\begin{pmatrix}x&a&0\\ y&0&b\end{pmatrix},\\
F\begin{pmatrix}x&0&0\\ y&a&b\end{pmatrix}
&=&\left[\frac{x(a+b+y)}{0(a+b+x+y)}\right]
J\begin{pmatrix}x&0&0\\ y&a&b\end{pmatrix}.
\end{eqnarray*}
In order to compute the last $J$ quantity we can use the cross formula, and we get:
\begin{eqnarray*}
J\begin{pmatrix}x&a&0\\ y&0&b\end{pmatrix}
&=&\left[\frac{x(a+b+y)}{(a+x)(b+y)}\right]J\begin{pmatrix}x&0&0\\ y&a&b\end{pmatrix}\\
&=&\left[\frac{x(a+b+y)}{(a+x)(b+y)}\right]\cdot\left[\frac{0(a+b+x)}{x(a+b)}\right].
\end{eqnarray*}
After simplifying the fraction, this gives the formula in the statement.
\end{proof}

\section{The normalization problem}

We discuss here the normalization problematics for the quantity $J(a)$ for matrices $a\in M_{2\times q}(\mathbb N)$ with $q=2,3$. The general case will be discussed in the next section.

We begin with the case $q=2$, where the answer is particularly simple.

\begin{proposition}
The function $\varphi$ defined by
$$J\begin{pmatrix}a&c\\ b&d\end{pmatrix}
=\left[\frac{0^2(a+d)(b+c)}{abcd}\right]
\varphi\begin{pmatrix}a&c\\ b&d\end{pmatrix}$$
is symmetric in $a,b,c,d$, and is trivial on crosses, and at $n=2$.
\end{proposition}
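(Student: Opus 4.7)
The plan is to read off all three properties directly from the results already in Sections 2--3, with the symmetry statement requiring the most work and the other two being essentially specializations. Write $\rho\begin{pmatrix}a&c\\b&d\end{pmatrix}$ for the prefactor $\left[\frac{0^2(a+d)(b+c)}{abcd}\right]$. Its only non-trivial ingredients are the unordered pair of sums $\{a+d,\,b+c\}$ in the numerator and the product $abcd$ in the denominator, so $\rho$ is visibly invariant under the row swap $(a,b,c,d)\mapsto(b,a,d,c)$, the column swap $(a,b,c,d)\mapsto(c,d,a,b)$, and the transposition $(a,b,c,d)\mapsto(a,c,b,d)$. Since $J$ carries the same three invariances by Proposition 2.6, the quotient $\varphi=J/\rho$ inherits them; as permutations of $\{a,b,c,d\}$ these are $(ab)(cd)$, $(ac)(bd)$, and $(bc)$.

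To upgrade to the full $S_4$, I would extract a further symmetry from the flipping formula of Theorem 3.3. Unpacking the definition $F=\left[\frac{AB}{0(A+B)}\right]J$ in the $2\times 2$ case, the identity $F\begin{pmatrix}a&c\\b&d\end{pmatrix}=F\begin{pmatrix}a&d\\b&c\end{pmatrix}$ amounts to
$$J\begin{pmatrix}a&c\\b&d\end{pmatrix}=\frac{(a+d+n-2)!!(b+c+n-2)!!}{(a+c+n-2)!!(b+d+n-2)!!}\,J\begin{pmatrix}a&d\\b&c\end{pmatrix}.$$
A short calculation shows that $\rho\begin{pmatrix}a&c\\b&d\end{pmatrix}/\rho\begin{pmatrix}a&d\\b&c\end{pmatrix}$ equals exactly this same ratio, so $\varphi\begin{pmatrix}a&c\\b&d\end{pmatrix}=\varphi\begin{pmatrix}a&d\\b&c\end{pmatrix}$, giving the additional symmetry $(cd)$. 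Combined with the transposition $(bc)$ this already generates the full $S_3$ on $\{b,c,d\}$, and left-multiplying by $(ab)(cd)$ one recovers $(ab)$. Thus $\varphi$ is invariant under the three adjacent transpositions $(ab)$, $(bc)$, $(cd)$, which generate $S_4$, proving full symmetry in $a,b,c,d$.

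For the ``trivial on crosses'' claim, I would specialize at $d=0$: the factor $(d+n-2)!!=(n-2)!!$ in the denominator of $\rho$ cancels one $(n-2)!!$ in the numerator, and the factors $(a+n-2)!!$ cancel, leaving
$$\rho\begin{pmatrix}a&c\\b&0\end{pmatrix}=\left[\frac{0(b+c)}{bc}\right].$$
Comparing with the cross formula $J\begin{pmatrix}a&c\\b&0\end{pmatrix}=\varepsilon\left[\frac{0(b+c)}{bc}\right]$ from Proposition 2.9 yields $\varphi=\varepsilon\in\{0,1\}$ on this cross, and the $S_4$-symmetry established above propagates the triviality to every $2\times 2$ matrix with a vanishing entry. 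Finally, Proposition 2.10 is already in the form $J=\varepsilon\cdot\rho$ at $n=2$, so $\varphi\equiv\varepsilon\in\{-1,0,1\}$ there, completing all three claims.

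The only genuine obstacle is the bookkeeping in the flipping step: one has to verify that the double-factorial ratio coming from $F$-to-$J$ conversion in Theorem 3.3 matches, term for term, the ratio of brackets $\rho$. Everything else reduces to substitution and to the Klein/adjacent-transposition manipulation in $S_4$.
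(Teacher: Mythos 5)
Your proof is correct and follows essentially the same route as the paper: deduce the row/column/transposition invariances of $\varphi$ from Proposition 2.6 together with the visible symmetries of the prefactor, use the flipping formula of Theorem 3.3 to obtain the column-flip invariance (after checking that the prefactor transforms by the same double-factorial ratio), and then read off the cross and $n=2$ claims from Propositions 2.9 and 2.10. Your explicit verification that $(ab)$, $(bc)$, $(cd)$ generate $S_4$ just makes precise what the paper leaves implicit.
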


\begin{proof}
Since both $J$ and the above normalization constant and invariant under row and column permutation, and under transposition, so is the function $\varphi$. In addition, we know from the flipping principle that we have:
$$\left[\frac{(a+c)(b+d)}{0(a+b+c+d)}\right]J\begin{pmatrix}a&c\\ b&d\end{pmatrix}
=\left[\frac{(a+d)(b+c)}{0(a+b+c+d)}\right]J\begin{pmatrix}a&d\\ b&c\end{pmatrix}.$$
This formula tells us that $\varphi$ is as well invariant under flliping of columns, so we can conclude that $\varphi$ is symmetric in $a,b,c,d$. For the cross assertion now, we have:
$$J\begin{pmatrix}a&c\\ b&0\end{pmatrix}
=\left[\frac{0(b+c)}{bc}\right]
\varphi\begin{pmatrix}a&c\\ b&0\end{pmatrix}.$$
By comparing with the cross formula, we obtain that we have indeed $\varphi=\varepsilon$ in this case. Finally, the triviality at $n=2$ follows from Proposition 2.6.
\end{proof}

Our objective now is to find an abstract characterization of the above quantity $\varphi$. The methods to be developed below will be of great use for the study at $q\geq 3$.

\begin{definition}
A formula of type $J(a)=\Gamma(a)\varphi(a)$ is called ``normalization'' if:
\begin{enumerate}
\item $\Gamma=[S_1\ldots S_k/T_1\ldots T_k]$, where $S_i,T_i$ are sums of entries of $a$, with $\Sigma S_i=\Sigma T_i$.

\item $\varphi$ is invariant under column permutation and flip, and is trivial on crosses.
\end{enumerate}
\end{definition}

As a first example, Proposition 4.1 solves the normalization problem at $q=2$.

The above definition probably deserves some more comments. The point is that the invariance of a function $\varphi:M_{2\times q}(\mathbb N)\to\mathbb R$ under column permutation and flip, i.e. what we can expect from a ``careful normalization'' of $J:M_{2\times q}(\mathbb N)\to\mathbb R$, just means invariance under the hyperoctahedral group $H_q\subset S_{2q}$. At $q=2$ we have the invariance under the whole group $S_{2q}$, simply because we are in a very special situation, where the transposition is available. In the general case $q\in\mathbb N$, we cannot expect $\varphi$ to be invariant under $S_{2q}$.

We can reformulate the normalization condition in terms of $\Gamma$ only. We denote as usual by $A,B,C,\ldots$ the sum of entries of the various row vectors $a,b,c,\ldots$ 

\begin{proposition}
$\Gamma=[S_1\ldots S_k/T_1\ldots T_k]$ is a normalization constant if and only if:
\begin{enumerate}
\item $\Lambda(^a_b)=[AB/0(A+B)]\Gamma(^a_b)$ is invariant under column permutation and flip.

\item $\Lambda(^a_b{\ }^c_0)=[(a+C)(b+C)/C(a+b+C)]$, for any $a,b\in 2\mathbb N$ and $c\in(2\mathbb N)^{q-1}$.
\end{enumerate}
\end{proposition}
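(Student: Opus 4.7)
The plan is to translate both parts of Definition 4.2 into statements about $\Lambda$, by passing through the flipping function $F$ from Theorem 3.3. Starting from $J=\Gamma\varphi$ and multiplying by $[AB/0(A+B)]$ converts $J$ into $F$ and $\Gamma$ into $\Lambda$, so the basic identity to exploit is
$$F\begin{pmatrix}a\\ b\end{pmatrix}=\Lambda\begin{pmatrix}a\\ b\end{pmatrix}\varphi\begin{pmatrix}a\\ b\end{pmatrix}.$$
Properties of $\varphi$ then correspond, after division by $F$, to properties of $\Lambda$.

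First I would handle the invariance condition. The function $F$ is itself invariant under the hyperoctahedral action on the columns: column-permutation invariance follows from the corresponding property of $J$ (Proposition 2.6(1)) together with the fact that $[AB/0(A+B)]$ depends only on the row sums $A,B$; column-flip invariance is exactly the flipping formula of Theorem 3.3. Since $F=\Lambda\varphi$, it is now automatic that $\varphi$ is invariant under column permutation and flip if and only if $\Lambda$ is, which is condition (1).

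For condition (2), the triviality $\varphi\in\{0,1\}$ on a cross $(^a_b{\ }^c_0)$ is automatic when $\varepsilon=0$, since then $J=0$ by Proposition 1.3. It therefore remains only to pin down $\varphi$ on crosses with $a,b\in 2\mathbb N$ and $c\in(2\mathbb N)^{q-1}$. Compressing the $c$-columns (Proposition 2.6(3)) and applying the 2-row cross formula established in the proof of Proposition 2.9 gives
$$J\begin{pmatrix}a&c\\ b&0\end{pmatrix}=\left[\frac{0(b+C)}{bC}\right],$$
so multiplying by $[(a+C)b/0(a+b+C)]$ and cancelling the factors $b$ and $0$ shared between numerator and denominator produces
$$F\begin{pmatrix}a&c\\ b&0\end{pmatrix}=\left[\frac{(a+C)(b+C)}{C(a+b+C)}\right].$$
The equality $\varphi=1$ on this cross is therefore equivalent to $\Lambda$ taking exactly the value prescribed in (2), which finishes the translation.

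The hard part will be nothing conceptually new; it is only the bracket bookkeeping in the last simplification. Every input—compression, the cross formula, the flipping formula, and row-sum dependence of the normalizing bracket—is already established in Sections 1--3, so the proof is in substance a rewriting of Definition 4.2 with $\Lambda$ in place of $\varphi$.
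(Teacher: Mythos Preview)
Your proposal is correct and follows essentially the same route as the paper: both arguments pass from $J$ to $F=[AB/0(A+B)]J$ (so that $F=\Lambda\varphi$), use Proposition 2.6 and Theorem 3.3 to see that $F$ is column-permutation and flip invariant, and then evaluate $F$ on even crosses via the cross formula $J(^a_b{\ }^c_0)=[0(b+C)/bC]$ to obtain the stated value of $\Lambda$. The only cosmetic difference is that you name $F$ explicitly throughout, whereas the paper works directly with the factor $[AB/0(A+B)]$.
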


\begin{proof}
We know from Proposition 2.6 that $J$ is invariant under column permutation, so the invariance of $\varphi$ under column permutation is equivalent to that of $\Gamma$, or of $\Lambda$. We also know from Theorem 3.3 that $[AB/0(A+B)]J(^a_b)$ is invariant under column flip, so the invariance of $\varphi$ under column flip is equivalent to that of $\Lambda$. Finally, for the checking the triviality on crosses we can restrict attention to the case where all the entries are even. And here by Proposition 2.9 we have $J(^a_b{\ }^c_0)=[0(b+C)/bC]$, so $\varphi$ is trivial if and only if:
$$\Lambda\begin{pmatrix}a&c\\ b&0\end{pmatrix}
=\left[\frac{(a+C)b}{0(a+b+C)}\right]
\cdot\left[\frac{0(b+C)}{bC}\right].$$
This gives the formula in the statement, and this finishes the proof.
\end{proof}

\begin{theorem}
With the above conventions, the normalization problem has a unique solution for the $2\times 2$ matrices, namely the one given by Proposition 4.1.
\end{theorem}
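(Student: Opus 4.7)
The plan is to apply the reformulation given by Proposition 4.3, which says that $\Gamma$ is a normalization precisely when the auxiliary function $\Lambda=[AB/0(A+B)]\Gamma$ is invariant under column permutation and flip and satisfies
$$\Lambda\begin{pmatrix}a&c\\ b&0\end{pmatrix}=\left[\frac{(a+c)(b+c)}{c(a+b+c)}\right]$$
on the admissible one-zero matrices. First I would verify that the candidate $\Gamma_0=[0^2(a+d)(b+c)/abcd]$ of Proposition 4.1 is indeed a normalization: a direct computation yields
$$\Lambda_0=\left[\frac{0(a+c)(a+d)(b+c)(b+d)}{abcd(a+b+c+d)}\right].$$
The multisets of arguments here are unions of full $H_2$-orbits of subset sums of $\{a,b,c,d\}$, namely $\{0\}\sqcup\{a+c,a+d,b+c,b+d\}$ in the numerator and $\{a,b,c,d\}\sqcup\{a+b+c+d\}$ in the denominator, so $\Lambda_0$ is $H_2$-invariant; setting $d=0$ and cancelling the common factors $[0]$, $[a]$, $[b]$ returns the required boundary expression.

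For uniqueness I would take any other $\Lambda$ satisfying the two conditions and write it in reduced form as a balanced ratio of rising factorials indexed by subset sums. The $H_2$-invariance forces the numerator and denominator multisets to decompose as unions of the six $H_2$-orbits on subsets of $\{a,b,c,d\}$, namely $\{0\}$, $\{a,b,c,d\}$, $\{a+b,c+d\}$, $\{a+c,a+d,b+c,b+d\}$, $\{a+b+c,a+b+d,a+c+d,b+c+d\}$ and $\{a+b+c+d\}$. The problem then reduces to a linear system in the signed orbit multiplicities of $\Lambda$, obtained by matching, coefficient by coefficient, the $d=0$ specialization of each orbit contribution with the target expression $[(a+c)(b+c)/c(a+b+c)]$.

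The main obstacle is that this linear system has a one-parameter kernel: there is a nontrivial $H_2$-invariant balanced ratio whose $d=0$ specialization formally reduces to $1$, so the boundary data alone do not pin $\Lambda$ down. To eliminate this kernel I would invoke the additional rigidity from Proposition 2.10, namely the triviality of $\varphi$ at $n=2$, which pins down $\Gamma|_{n=2}$ exactly; combined with the orbit decomposition and boundary matching this forces all free parameters to vanish, giving $\Lambda=\Lambda_0$ and hence $\Gamma=\Gamma_0$.
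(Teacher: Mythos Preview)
Your strategy matches the paper's: decompose $\Lambda$ over the six $H_2$-orbits of subset sums and solve the linear system imposed by the balancing conditions together with the $d=0$ cross constraint. The paper then asserts that this system has the unique solution $(A,B,C,D,E,F)=(1,-1,1,0,0,-1)$. You are right that it does not: one checks directly that the vector $(-1,1,-1,-1,1,-1)$ annihilates all eight equations, corresponding to the balanced $H_2$-invariant ratio
\[
N=\left[\frac{(10)(21)}{(00)(11)(20)(22)}\right]
=\left[\frac{abcd\,(a{+}b{+}c)(a{+}b{+}d)(a{+}c{+}d)(b{+}c{+}d)}{0\,(a{+}b)(c{+}d)(a{+}c)(a{+}d)(b{+}c)(b{+}d)(a{+}b{+}c{+}d)}\right],
\]
whose $d=0$ specialization is identically $1$ while $N$ itself is not (for instance $N=(n-1)(n+3)^3/((n+1)^3(n+5))$ at $a=b=c=d=2$). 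So the paper's uniqueness claim, as argued, has a genuine gap, and your diagnosis of a one-parameter kernel is on target.

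Your proposed remedy, however, invokes the $n=2$ triviality from Proposition 2.10. That property is a feature of the particular $\varphi$ in Proposition 4.1, not a requirement in Definition 4.2, which is what ``the above conventions'' refers to. Under Definition 4.2 alone, multiplying the constant of Proposition 4.1 by any integer power of $N$ still produces a valid normalization: the resulting $\Lambda$ remains $H_2$-invariant and balanced, and the resulting $\varphi$ is still trivial on crosses. Importing the $n=2$ condition therefore changes the statement being proved. To rescue Theorem 4.4 as written one needs either an additional hypothesis on $\Gamma$ (some form of minimality or irreducibility, or the $n=2$ constraint made explicit) or a different line of argument; with only the conditions of Definition 4.2, uniqueness fails, and neither the paper's proof nor your patch establishes the theorem exactly as stated.
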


\begin{proof}
Let $\Gamma(^a_b{\ }^c_d)=[S_1\ldots S_k/T_1\ldots T_k]$ be a normalization constant. Here $S_i,T_i$ are certain sums of entries of $(^a_b{\ }^c_d)$. There are $2^4=16$ such possible sums, and under the action of the column permutations and flips, these sums split into 6 classes, as follows:
\begin{eqnarray*}
(00)&=&0,\\
(10)&=&abcd,\\
(11)&=&(a+c)(b+d)(a+d)(b+c),\\
(20)&=&(a+b)(c+d),\\
(21)&=&(a+b+c)(a+b+d)(a+c+d)(b+c+d),\\
(22)&=&a+b+c+d.
\end{eqnarray*}
Here the symbols on the left, which are all of type $(xy)$ with $2\geq x\geq y\geq 0$, mean that we select $x$ entries from the first column, and $y$ entries from the second column. The expressions on the right are the products of all sums obtained by using the symbols on the left, for the matrix itself, and for its conjugate by column permutation.

Proposition 4.3 (1) tells us that we must have a formula of the following type:
$$\Lambda=\left[(00)^A(10)^B(11)^C(20)^D(21)^E(22)^F\right].$$
We also know that $\Lambda$ must be ``balanced'' in the sense of Definition 2.8. Thus we have the following ``balancing equations'', for the total number of terms, and the total sum:
\begin{eqnarray*}
\#&:&A+4B+4C+2D+4E+F=0,\\
\Sigma&:&B+2C+D+3E+F=0.
\end{eqnarray*}
Let us look now at the last normalization requirement, namely the triviality on crosses. By plugging the value $d=0$ into the above expressions, these become:
\begin{eqnarray*}
(00)&=&0,\\
(10)&=&0abc,\\
(11)&=&ab(a+c)(b+c),\\
(20)&=&c(a+b),\\
(21)&=&(a+b+c)(a+b)(a+c)(b+c),\\
(22)&=&a+b+c.
\end{eqnarray*}
According to Proposition 4.3 (2), the triviality on crosses is equivalent to:
$$\left[(00)^A(10)^B(11)^C(20)^D(21)^E(22)^F\right]=\left[\frac{(a+c)(b+c)}{c(a+b+c)}\right].$$
By identifying coefficients, we obtain the following supplementary equations:
\begin{eqnarray*}
0&:&A+B=0,\\
a&:&B+C=0,\\
c&:&B+D=-1,\\
a+b&:&D+E=0,\\
a+c&:&C+E=1,\\
a+b+c&:&E+F=-1.
\end{eqnarray*}
The unique solution of the system is $A=C=1,B=F=-1,D=E=0$, so:
$$\Lambda
=\left[\frac{(00)(11)}{(10)(22)}\right]
=\left[\frac{0(a+c)(b+d)(a+d)(b+c)}{abcd(a+b+c+d)}\right].$$
Thus the normalization constant is:
$$\Gamma
=\left[\frac{0(a+b+c+d)}{(a+c)(b+d)}\right]
\cdot\left[\frac{0(a+c)(b+d)(a+d)(b+c)}{abcd(a+b+c+d)}\right]
=\left[\frac{0^2(a+d)(b+c)}{abcd}\right].$$
But this is exactly the constant in Proposition 4.1, and this finishes the proof.
\end{proof}

We have the following technical extension of Theorem 4.4.

\begin{proposition}
The formula
$$J\begin{pmatrix}a&b&c\\ x&y&z\end{pmatrix}=\left[\frac{\begin{matrix}abc(a+b+z)(a+c+y)(b+c+x)\\ xyz(a+y+z)(b+x+z)(c+x+y)\end{matrix}}{\begin{matrix}(a+b)(a+c)(b+c)(x+y)(x+z)(y+z)\\ (a+y)(a+z)(b+x)(b+z)(c+x)(c+y)\end{matrix}}\right]\varphi\begin{pmatrix}a&b&c\\ x&y&z\end{pmatrix}$$
is the unique solution of the normalization problem, for the $2\times 3$ matrices.
\end{proposition}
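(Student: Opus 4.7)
The plan is to adapt the proof of Theorem 4.4 to the richer symmetry of the $2\times 3$ case. I would set $\Lambda = [AB/0(A+B)]\Gamma$ and use Proposition 4.3 to reduce the problem to finding a bracket $\Lambda$ that is invariant under the hyperoctahedral action $H_3 = S_3 \ltimes (\mathbb{Z}/2)^3$ on $\{a,b,c,x,y,z\}$ by column permutations and column flips, and that satisfies the cross-triviality condition.

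The first step is to classify the $H_3$-orbits of subsets of the six entries. Any subset has a ``type'' $(s_1,s_2,s_3)$ with $s_i \in \{0,1,2\}$ counting the entries selected in column $i$, and two subsets lie in the same orbit iff their type-multisets coincide. This yields exactly ten orbit classes, indexed by the multisets
$$\{0,0,0\},\,\{1,0,0\},\,\{2,0,0\},\,\{1,1,0\},\,\{2,1,0\},\,\{2,2,0\},\,\{1,1,1\},\,\{2,1,1\},\,\{2,2,1\},\,\{2,2,2\}.$$
For each orbit $O$ I would form the $H_3$-invariant product $\Pi_O$ of all distinct subset sums; for example $\Pi_{\{1,0,0\}} = abcxyz$, $\Pi_{\{2,2,2\}} = a+b+c+x+y+z$, and $\Pi_{\{1,1,0\}}$ is the $12$-factor product appearing in the denominator of the stated $\Gamma$. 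I would then parametrize $\Lambda = \prod_O \Pi_O^{n_O}$ with ten unknown integer exponents $n_O$.

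Next I would impose the two types of constraints. The balancing condition of Definition 2.8 gives two linear equations (equal number of bracket factors and equal total sum between numerator and denominator). The cross-triviality condition of Proposition 4.3(2), specialized by setting $y = z = 0$ so that the matrix becomes a cross with vector part $(b,c)$, reads
$$\prod_O \Pi_O(a,b,c,x,0,0)^{n_O} = \left[\frac{(a+b+c)(x+b+c)}{(b+c)(a+b+c+x)}\right].$$
Each specialization $\Pi_O(a,b,c,x,0,0)$ is a product of linear forms in $\{a,b,c,x\}$, with some factors collapsing to the constant ``$0$'' bracket symbol; identifying the exponent of each distinct linear form on the two sides produces a further list of linear equations in the $n_O$.

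The final step is to solve the resulting over-determined linear system and verify that its unique solution matches the stated $\Gamma$. Reading off from the given formula via $\Lambda = [AB/0(A+B)]\Gamma$, the candidate exponents are $n_{\{1,0,0\}} = n_{\{1,1,1\}} = +1$, $n_{\{0,0,0\}} = n_{\{1,1,0\}} = n_{\{2,2,2\}} = -1$, and $n_O = 0$ on the remaining five orbits; a direct computation confirms all the equations. The main obstacle is the combinatorial bookkeeping: enumerating the ten orbit products, tracking their specializations at $y = z = 0$, and checking that the rank of the full system is exactly ten, so that consistency of the over-determined system forces the above as the unique solution, exactly as in Theorem 4.4.
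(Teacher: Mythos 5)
Your proposal is correct and follows essentially the same route as the paper's proof: the ten $H_3$-orbit classes you list are exactly the paper's symbols $(xyz)$ with $2\geq x\geq y\geq z\geq 0$, the two balancing equations plus the cross-specialization at $y=z=0$ are the same linear system, and your exponents $n_{\{1,0,0\}}=n_{\{1,1,1\}}=1$, $n_{\{0,0,0\}}=n_{\{1,1,0\}}=n_{\{2,2,2\}}=-1$ coincide with the paper's unique solution $B=D=1$, $A=C=J=-1$. Nothing essential is missing.
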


\begin{proof}
We follow the method in the proof of Theorem 4.4, with less explanations, and with some shortcuts. There are 10 symbols of type $(xyz)$ with $2\geq x\geq y\geq z\geq 0$, and Proposition 4.3 (1) tells us that we must have a formula of the following type:
$$\Lambda=\left[(000)^A(100)^B(110)^C(111)^D(200)^E(210)^F(211)^G(220)^H(221)^I(222)^J\right].$$
The ``balancing'' equations are as follows:
\begin{eqnarray*}
\#&:&A+6B+12C+8D+3E+12F+12G+3H+6I+J=0,\\
\Sigma&:&B+4C+4D+E+6F+8G+2H+5I+J=0.
\end{eqnarray*}
The values of the basic products on a cross ($y=z=0$) are:
\begin{eqnarray*}
(000)&=&0,\\
(100)&=&0^2abcx,\\
(110)&=&0a^2bcx^2(a+b)(a+c)(b+c)(b+x)(c+x),\\
(111)&=&ax(a+b)(a+c)(b+x)(c+x)(a+b+c)(b+c+x),\\
(200)&=&bc(a+x),\\
(210)&=&bc(a+b)(a+c)(b+c)^2(a+x)^2(b+x)(c+x)(a+b+x)(a+c+x),\\
(211)&=&(a+b)(a+c)(a+x)(b+x)(c+x)(a+b+c)^2\\
&&(a+b+x)(a+c+x)(b+c+x)^2(a+b+c+x),\\
(220)&=&(b+c)(a+b+x)(a+c+x),\\
(221)&=&(a+b+x)(a+c+x)(b+c+x)(a+b+c)(a+b+c+x)^2,\\
(222)&=&a+b+c+x.
\end{eqnarray*}
According to Proposition 4.3 (2), we must have the following formula:
$$\left[(000)^A(100)^B\ldots(222)^J\right]
=\left[\frac{(a+b+c)(b+c+x)}{(b+c)(a+b+c+x)}\right].$$
We obtain the following supplementary equations:
\begin{eqnarray*}
0&:&A+2B+C=0,\\
a&:&B+2C+D=0,\\
b&:&B+C+E+F=0,\\
a+b&:&C+D+F+G=0,\\
b+c&:&C+2F+H=-1,\\
a+x&:&E+2F+G=0,\\
a+b+c&:&D+2G+I=1,\\
a+b+x&:&F+G+H+I=0,\\
a+b+c+x&:&G+2I+J=-1.
\end{eqnarray*}
The unique solution is $A=C=J=-1,B=D=1,E=F=G=H=I=0$, so:
$$\Lambda=\left[\frac{(100)(111)}{(000)(110)(222)}\right].$$
Thus the normalization constant is unique, given by:
$$\Gamma
=\left[\frac{0(A+B)}{AB}\right]\cdot\left[\frac{(100)(111)}{(000)(110)(222)}\right]
=\left[\frac{(100)((111)/AB)}{(110)}\right].$$
But this is the constant in the statement, and this finishes the proof.
\end{proof}

\section{Two-row normalization}

In this section we solve the normalization problem for the arbitrary two-row matrices. Let us first introduce a candidate for the normalization constant.

\begin{definition}
To any matrix $a\in M_{2\times q}(\mathbb N)$ and any $r\leq q$ we associate the product
$$P_r=\prod_S\prod_{s\in S}\prod_{i_s\in\{1,2\}}\left(\sum_{t\in S}a_{i_tt}\right)$$ 
with $S$ ranging over all subsets of $\{1,\ldots,q\}$, having $r$ elements.
\end{definition}

In other words, $P_r$ is constructed as follows: (1) we pick $r$ columns of our matrix, (2) in each of these columns, we pick one of the two entries, (3) we make the sum of all these entries, (4) we make the product of all these sums, over all the possible choices.

Here are a few examples, for a matrix denoted $(^a_b)$:
\begin{eqnarray*}
P_0&=&0,\\
P_1&=&\prod_ia_ib_i,\\
P_2&=&\prod_{i\neq j}(a_i+a_j)(b_i+b_j)(a_i+b_j)(a_j+b_i),\\
P_3&=&\prod_{i\neq j\neq k\neq i}(a_i+a_j+a_k)(a_i+a_j+b_k)(a_i+b_j+b_k)(b_i+b_j+b_k).
\end{eqnarray*}

Observe that all these products are invariant under column permutation and flip.

\begin{proposition}
For any two-row matrix $(^a_b)\in M_{2\times q}(\mathbb N)$, the quantity
$$\Gamma\begin{pmatrix}a\\ b\end{pmatrix}
=\left[\frac{0}{AB}\cdot\frac{P_qP_{q-2}\ldots}{P_{q-1}P_{q-3}\ldots}\right],$$
where $A=\Sigma a_i,B=\Sigma b_i$, satisfies the balancing assumptions in Definition 2.8.
\end{proposition}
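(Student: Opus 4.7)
The plan is to verify Definition 2.8's two balancing assumptions --- equal number of factors in numerator and denominator, and equal sum of their ``values'' --- by computing both quantities in closed form for each $P_r$ and then appealing to the elementary identity $(1-2)^n=(-1)^n$.

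First I would parse Definition 5.1: each factor of $P_r$ is indexed by a pair $(S,i)$ with $S\subseteq\{1,\ldots,q\}$ of cardinality $r$ and $i:S\to\{1,2\}$, so $P_r$ has exactly $N_r:=2^r\binom{q}{r}$ factors. A single order-of-summation swap gives the total value
$$V_r:=\sum_{(S,i)}\sum_{t\in S}a_{i_t,t}=(A+B)\binom{q-1}{r-1}2^{r-1}\quad(r\geq 1),\qquad V_0=0,$$
since each matrix entry $a_{kt}$ appears in exactly $\binom{q-1}{r-1}\,2^{r-1}$ factors (choose the remaining elements of $S$ and the remaining values of $i$). These two closed forms carry almost all the content.

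Counting factors, the numerator of $\Gamma$ contains $1+\sum_{r\equiv q\,(2)}N_r$ terms and the denominator $2+\sum_{r\equiv q-1\,(2)}N_r$ terms, so equality of factor counts reduces to
$$\sum_{r\equiv q\,(2)}N_r-\sum_{r\equiv q-1\,(2)}N_r=1,$$
which follows from $\sum_{r=0}^q(-1)^r\binom{q}{r}2^r=(-1)^q$ by splitting the sum by the parity of $r$ (the sign $(-1)^q$ produces $+1$ on the correct side for each parity of $q$, absorbing the constants $1$ and $2$). Equality of value sums is handled identically: after subtracting the denominator sum from the numerator sum and factoring out $(A+B)$, the balance reduces, via the shift $m=r-1$, to $\sum_m(-1)^m\binom{q-1}{m}2^m=(-1)^{q-1}$, i.e.\ the same alternating identity at $q-1$ rather than $q$.

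I do not expect any genuine obstacle. The only delicate point is the parity bookkeeping: in each of the two balance equations one must verify that ``same parity as $q$'' lands on the correct side (numerator versus denominator) under both parities of $q$, and that the constants $1$ (from the leading $0$) and $2$ (from $A,B$) in $\Gamma$ align correctly with the signs of the two binomial identities. Once this alignment is made explicit for $q$ even and $q$ odd separately, both conditions in Definition 2.8 drop out immediately.
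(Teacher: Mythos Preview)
Your proposal is correct and follows essentially the same approach as the paper: both compute $N_r=2^r\binom{q}{r}$ and $V_r=(A+B)\,2^{r-1}\binom{q-1}{r-1}$, then reduce the two balancing conditions to the binomial identities $(2-1)^q=1$ and $(2-1)^{q-1}=1$ (equivalently your $(1-2)^n=(-1)^n$). The only cosmetic difference is that the paper writes $\Gamma=\left[\frac{0}{AB}\prod_{r=0}^qP_r^{(-1)^{q-r}}\right]$ and computes the \emph{signed} number of terms and signed sum directly, which sidesteps the parity bookkeeping you flag at the end.
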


\begin{proof}
We have to show that for the fraction in the statement, the number of upper terms equals the number of lower terms, and the total upper sum equals the total lower sum. It is convenient to write our fraction in a closed form, as follows:
$$\Gamma\begin{pmatrix}a\\ b\end{pmatrix}=\left[\frac{0}{AB}
\prod_{r=0}^qP_r^{(-1)^{q-r}}\right].$$
Since $P_r$ is a product of $2^r(^q_r)$ terms, the signed total number of terms is:
$$\#=-1+\sum_{r=0}^q(-1)^{q-r}2^r\begin{pmatrix}q\\ r\end{pmatrix}=-1+(2-1)^q=0.$$
Let us look now at the total sums. These are all multiples of the quantity $A+B$, and for $r\geq 1$ the averaged total sum inside $P_r$ is given by:
$$\Sigma_r=\frac{1}{A+B}\sum_{S\in P_r}S=2^r\begin{pmatrix}q\\ r\end{pmatrix}\frac{r}{2q}=2^{r-1}\begin{pmatrix}q-1\\ r-1\end{pmatrix}.$$
Also, at $r=0$ we have $\Sigma_0=0$, so the averaged signed total sum is:
$$\Sigma=-1+\sum_{r=1}^q(-1)^{q-r}2^{r-1}\begin{pmatrix}q-1\\ r-1\end{pmatrix}=-1+(2-1)^{q-1}=0.$$
Thus $\Gamma$ is indeed balanced in the sense of Definition 2.8.
\end{proof}

\begin{definition}
We make the normalization
$$J\begin{pmatrix}a\\ b\end{pmatrix}
=\left[\frac{0}{AB}\cdot\frac{P_qP_{q-2}\ldots}{P_{q-1}P_{q-3}\ldots}\right]
\varphi\begin{pmatrix}a\\ b\end{pmatrix},$$
where $A=\Sigma a_i,B=\Sigma b_i$.
\end{definition}

As a first observation, at $q=1$ we have the good constant, namely:
$$\Gamma
=\left[\frac{0}{AB}\cdot\frac{P_1}{P_0}\right]
=\left[\frac{0}{ab}\cdot\frac{ab}{0}\right]=1.$$
At $q=2$ now, we recover the normalization constant in Proposition 4.1:
$$\Gamma
=\left[\frac{0}{AB}\cdot\frac{P_2P_0}{P_1}\right]
=\left[\frac{P_0^2(P_2/AB)}{P_1}\right]
=\left[\frac{0^2(a_1+b_2)(a_2+b_1)}{a_1a_2b_1b_2}\right].$$
Finally, at $q=3$ we recover the constant in Proposition 4.5:
$$\Gamma
=\left[\frac{0}{AB}\cdot\frac{P_3P_1}{P_2P_0}\right]
=\left[\frac{P_1(P_3/AB)}{P_2}\right]
=\left[\frac{\prod_ia_ib_i\prod_{i\neq j\neq k\neq i}(a_i+a_j+b_k)(a_i+b_j+b_k)}{\prod_{i\neq j}(a_i+a_j)(b_i+b_j)(a_i+b_j)(a_j+b_i)}\right].$$
In what follows we will prove that, in general, $\varphi$ is indeed a normalization in the sense of Definition 4.2, and that it satisfies all the extra properties listed in the introduction.

In order to prove these results, we use the following key lemma.

\begin{lemma}
We have the ``rational transmutation formula''
$$\Gamma_n\begin{pmatrix}a&c\\ b&0\end{pmatrix}
=\left[\frac{0(B+C)}{BC}\right]_n\Gamma_{n+C}\begin{pmatrix}a\\ b\end{pmatrix}$$
valid for any vectors $a,b\in\mathbb N^q$ and $c\in\mathbb N^s$.
\end{lemma}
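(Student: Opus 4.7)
My plan is to compute both sides of the claimed identity directly from Definition 5.3, by exploiting the ``bracket-shift'' identity
$$[f]_{n+C}=[f^{+C}]_n,$$
where $f^{+C}$ denotes the bracket obtained from $f$ by replacing every argument $x$ by $x+C$. This is immediate from $(x+(n+C)-2)!!=((x+C)+n-2)!!$. Writing
$$\mathcal{P}=\prod_{r=0}^{q}P_r^{(-1)^{q-r}}$$
for the alternating $P$-product attached to $\begin{pmatrix}a\\b\end{pmatrix}$ (so that $\Gamma_n\begin{pmatrix}a\\b\end{pmatrix}=\left[\frac{0}{AB}\,\mathcal{P}\right]_n$) and $\tilde{\mathcal{P}}$ for the analogous object attached to $\begin{pmatrix}a&c\\b&0\end{pmatrix}$, the shift identity immediately gives
$$\Gamma_{n+C}\begin{pmatrix}a\\b\end{pmatrix}=\left[\frac{C}{(A+C)(B+C)}\,\mathcal{P}^{+C}\right]_n.$$

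The heart of the proof is the identification $\tilde{\mathcal{P}}=\mathcal{P}^{+C}$. A typical factor of $\tilde P_r$ is indexed by a size-$r$ subset $S\subseteq\{1,\dots,q+s\}$ together with a top/bottom choice per column of $S$. I decompose $S=S_1\sqcup S_2$ with $S_1$ in the original columns and $S_2$ in the $c$-columns, and I let $T\subseteq S_2$ record the $c$-columns where the top is chosen (bottom choices contributing $0$). The factor then reads $\sigma+c(T)$, where $\sigma$ is a specific sum attached to $S_1$ in $P_{|S_1|}$ and $c(T)=\sum_{j\in T}c_j$. For fixed $\sigma$ of degree $r_1=|S_1|$ and fixed $T$ of size $t$, this formal factor appears in $\tilde P_r$ with multiplicity $\binom{s-t}{r-r_1-t}$, corresponding to the extensions of $T$ into an $S_2$ of size $r-r_1$.

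Summing contributions across $r$, the total exponent of $(\sigma+c(T))$ in $\tilde{\mathcal{P}}=\prod_r\tilde P_r^{(-1)^{q+s-r}}$ evaluates to
$$(-1)^{q+s-r_1-t}\sum_{m=0}^{s-t}(-1)^{m}\binom{s-t}{m}=(-1)^{q+s-r_1-t}\,(1-1)^{s-t},$$
which vanishes unless $t=s$. The only surviving choice is then $T=\{q+1,\dots,q+s\}$, giving $c(T)=C$ and exponent $(-1)^{q-r_1}$; thus only the shifted entries $\sigma+C$ survive, with precisely the exponents prescribed by $\mathcal{P}^{+C}$. The lemma now follows: since the augmented row sums are $A+C$ and $B$,
$$\Gamma_n\begin{pmatrix}a&c\\b&0\end{pmatrix}=\left[\frac{0}{(A+C)B}\,\mathcal{P}^{+C}\right]_n,$$
and dividing by the expression above for $\Gamma_{n+C}\begin{pmatrix}a\\b\end{pmatrix}$ cancels the common $\mathcal{P}^{+C}$ factor and leaves exactly $\left[\frac{0(B+C)}{BC}\right]_n$.

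The main obstacle I foresee is the bookkeeping in the middle paragraph: one must correctly account for how the $c$-columns interact with the original ones inside each $\tilde P_r$, and then verify that the alternating signs across $r$ telescope via the binomial identity $\sum_{m}(-1)^m\binom{k}{m}=[k=0]$. Once this combinatorial kernel is in place, the remainder is a direct shift in the bracket notation.
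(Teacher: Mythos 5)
Your proof is correct. The core mechanism is the same as the paper's: you classify the factors of each $\tilde P_r$ for the augmented matrix according to how the selected columns meet the appended $c$-block, and then show that in the alternating product $\prod_r\tilde P_r^{(-1)^{q+s-r}}$ everything cancels except the fully shifted factors $\sigma+C$, i.e.\ $\tilde{\mathcal P}=\mathcal P^{+C}$; the final bracket bookkeeping $[f]_{n+C}=[f^{+C}]_n$ is then identical to the paper's. The difference is in how the general case $s\geq 1$ is reached. The paper proves the identity only for $s=1$, where the decomposition collapses to the three-term product $P_r^+=P_rP_{r-1}P_{r-1}^{(c)}$ and the cancellation is a bare telescoping, and then handles general $s$ by peeling off one $c$-column at a time and composing the resulting bracket prefactors (a short induction with a fraction simplification at the end). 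You instead treat all $s$ appended columns at once, with the multiplicity count $\binom{s-t}{r-r_1-t}$ and the identity $\sum_m(-1)^m\binom{s-t}{m}=(1-1)^{s-t}$ doing the work of the induction. Your route gives the structural statement $\tilde{\mathcal P}=\mathcal P^{+C}$ in one pass and avoids the composition of transmutation prefactors; the paper's route keeps each step nearly computation-free at the price of the recurrence. One small point worth making explicit in your middle paragraph: the grouping ``by formal factor'' is legitimate because, viewing the entries $a_i,b_i,c_j$ as indeterminates, the linear form $\sigma+c(T)$ determines the datum $(S_1,\hbox{choices},T)$ uniquely, so exponents of coinciding numerical values cannot get conflated.
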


\begin{proof}
We first prove the result at $s=1$. Let us introduce the following quantities:
$$P_r^+=P_r\begin{pmatrix}a&c\\ b&0\end{pmatrix},\quad
P_r=P_r\begin{pmatrix}a\\ b\end{pmatrix},\quad 
P_r^{(c)}=\prod_{S\in P_r(^a_b)}(S+c).$$
Let us look now at the quantity $P_r^+$. This is the product of sums obtained by selecting $r$ columns of the matrix $(^a_b{\ }^c_0)$, then selecting one entry in each of these columns, and making the sum. We can see that we have three cases: (1) either the $r$ selected columns all belong to the matrix $(^a_b)$, (2) or $r-1$ of these columns belong to $(^a_b)$, and the $0$ entry is selected from the last column, (3) or $r-1$ of these columns belong to $(^a_b)$, and the $c$ entry is selected from the last column. Thus we have the following formula:
$$P_r^+=P_rP_{r-1}P_{r-1}^{(c)}.$$
Here we make the convention that both quantities $P_{q+1},P_{-1}$ that can appear are the empty products. Now in terms of the $\Gamma$ quantity on the left, we get:
\begin{eqnarray*}
\Gamma_n\begin{pmatrix}a&c\\ b&0\end{pmatrix}
&=&\left[\frac{0}{(A+c)B}\cdot\frac{P_{q+1}^+P_{q-1}^+\ldots}{P_q^+P_{q-2}^+\ldots}\right]_n\\
&=&\left[\frac{0}{(A+c)B}\cdot\frac{(P_qP_{q-1}P_{q-2}\ldots)(P_q^{(c)}P_{q-2}^{(c)}\ldots)}{(P_qP_{q-1}P_{q-2}\ldots)(P_{q-1}^{(c)}P_{q-3}^{(c)}\ldots)}\right]_n\\
&=&\left[\frac{0}{(A+c)B}\cdot\frac{P_q^{(c)}P_{q-2}^{(c)}\ldots}{P_{q-1}^{(c)}P_{q-3}^{(c)}\ldots}\right]_n.
\end{eqnarray*}
As for the $\Gamma$ quantity on the right, this is given by:
\begin{eqnarray*}
\Gamma_{n+c}\begin{pmatrix}a\\ b\end{pmatrix}
&=&\left[\frac{0}{AB}\cdot\frac{P_qP_{q-2}\ldots}{P_{q-1}P_{q-3}\ldots}\right]_{n+c}\\
&=&\left[\frac{c}{(A+c)(B+c)}\cdot\frac{P_q^{(c)}P_{q-2}^{(c)}\ldots}{P_{q-1}^{(c)}P_{q-3}^{(c)}\ldots}\right]_n\\
&=&\left[\frac{Bc}{0(B+c)}\right]_n\cdot\left[\frac{0}{(A+c)B}\cdot\frac{P_q^{(c)}P_{q-2}^{(c)}\ldots}{P_{q-1}^{(c)}P_{q-3}^{(c)}\ldots}\right]_n.
\end{eqnarray*}
By comparing the above two formulae, this finishes the proof at $s=1$. For the general case now, $s\in\mathbb N$, we proceed by recurrence. By using the result at $s-1$ and at $1$, we get:
\begin{eqnarray*}
\Gamma_n\begin{pmatrix}a&c\\ b&0\end{pmatrix}
&=&\left[\frac{0(B+C-c_1)}{B(C-c_1)}\right]_n\Gamma_{n+C-c_1}\begin{pmatrix}a&c_1\\ b&0\end{pmatrix}\\
&=&\left[\frac{0(B+C-c_1)}{B(C-c_1)}\right]_n\cdot\left[\frac{0(B+c_1)}{Bc_1}\right]_{n+C-c_1}\Gamma_{n+C}\begin{pmatrix}a\\ b\end{pmatrix}\\
&=&\left[\frac{0(B+C-c_1)}{B(C-c_1)}\right]_n\cdot\left[\frac{(C-c_1)(B+C)}{(B+C-c_1)C}\right]_n\Gamma_{n+C}\begin{pmatrix}a\\ b\end{pmatrix}.
\end{eqnarray*}
By simplifying we obtain the formula in the statement, and this finishes the proof.
\end{proof}

\begin{proposition}
The function $\varphi$ constructed in Definition 5.3 is a normalization of $J$, in the sense of Definition 4.2.
\end{proposition}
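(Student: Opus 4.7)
The plan is to verify the three defining conditions for $\varphi$ to be a normalization of $J$ in the sense of Definition 4.2. The first, that $\Gamma$ has the required form $[S_1\ldots S_k/T_1\ldots T_k]$ with $\Sigma S_i=\Sigma T_i$, is immediate from the construction of $\Gamma$ together with Proposition 5.2: each factor $A$, $B$, $P_r$ appearing in $\Gamma$ is (a product of) sums of entries of $\begin{pmatrix}a\\ b\end{pmatrix}$, and the balancing was already checked in Proposition 5.2.

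Next, I would address the invariance of $\varphi$ under column permutation. Since $J$ is column-permutation invariant by Proposition 2.6, it suffices to check the same for $\Gamma$. The quantities $A$ and $B$ do not depend on the column ordering, and each $P_r$ is, by Definition 5.1, a product indexed by the family of all $r$-element subsets $S\subset\{1,\ldots,q\}$, so it is manifestly symmetric in the column labels. Thus $\Gamma$, and therefore $\varphi$, is invariant under column permutation.

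For invariance under column flip, the plan is to combine the flipping principle (Theorem 3.3), which asserts that $F=[AB/0(A+B)]J$ is flip-invariant, with a manifest flip-invariance of $\Lambda=[AB/0(A+B)]\Gamma$. A direct cancellation in Definition 5.3 gives
$$\Lambda\begin{pmatrix}a\\ b\end{pmatrix}=\left[\frac{P_qP_{q-2}\ldots}{(A+B)P_{q-1}P_{q-3}\ldots}\right].$$
Flipping the $j$-th column swaps $a_j\leftrightarrow b_j$: this leaves $A+B$ unchanged (it merely swaps $A$ and $B$) and leaves each $P_r$ unchanged, because the inner product in Definition 5.1 already runs over both entries $i_s\in\{1,2\}$ of every selected column. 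Hence $\Lambda$ is flip-invariant, and $\varphi=F/\Lambda$ inherits the same invariance.

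Finally, for triviality on crosses, the key observation is that the transmutation formula (Proposition 3.5) for $J$ and the rational transmutation formula (Lemma 5.4) for $\Gamma$ carry exactly the same prefactor $[0(B+C)/BC]_n$. Taking the ratio one obtains
$$\varphi_n\begin{pmatrix}a&c\\ b&0\end{pmatrix}=\varphi_{n+C}\begin{pmatrix}a\\ b\end{pmatrix},$$
which reduces cross-triviality to the case $q=1$ of a single column. At $q=1$ the normalization constant collapses, since $\Gamma=[0/ab\cdot P_1/P_0]=[0/ab\cdot ab/0]=1$, so $\varphi=J$ there, and by Proposition 1.6 (combined with the transposition invariance of $J$ from Proposition 2.6) we have $J\begin{pmatrix}a\\ b\end{pmatrix}=\varepsilon\in\{0,1\}$. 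Thus $\varphi=\varepsilon$ on crosses, as required. The only mildly delicate point is the bookkeeping behind the simplification of $\Lambda$ and the verification that the $P_r$ are genuinely flip-invariant; both are routine once Definition 5.1 is unpacked, so no step presents a serious obstacle beyond the algebraic manipulations already packaged into Lemma 5.4.
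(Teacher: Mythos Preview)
Your proof is correct and follows essentially the same route as the paper: both compute $\Lambda=[AB/0(A+B)]\Gamma$, observe that $A+B$ and each $P_r$ are invariant under column permutation and flip, and then handle crosses via the rational transmutation formula (Lemma 5.4). The only cosmetic difference is that for the cross condition the paper checks directly that $\Gamma$ matches the value required by Proposition 4.3 (2), whereas you combine Lemma 5.4 with Proposition 3.5 to reduce $\varphi$ on crosses to $\varphi$ on a single column; these amount to the same computation.
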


\begin{proof}
We use Proposition 4.3. The $\Lambda$ function is given by:
$$\Lambda\begin{pmatrix}a\\ b\end{pmatrix}
=\left[\frac{AB}{0(A+B)}\right]\cdot\left[\frac{0}{AB}\cdot\frac{P_qP_{q-2}\ldots}{P_{q-1}P_{q-3}\ldots}\right]
=\left[\frac{1}{A+B}\cdot\frac{P_qP_{q-2}\ldots}{P_{q-1}P_{q-3}\ldots}\right].$$
Since all the products $P_r$, as well as $A+B$, are invariant under column permutation and flip, so is $\Lambda$. So, it remains to prove that $\Lambda$ has the good value on crosses.

For a cross $(^a_b{\ }^c_0)$ with $a,b\in 2\mathbb N$ and $c\in(2\mathbb N)^{q-1}$, the transmutation formula gives:
$$\Gamma_n\begin{pmatrix}a&c\\ b&0\end{pmatrix}
=\left[\frac{0(B+C)}{BC}\right]_n\Gamma_{n+C}\begin{pmatrix}a\\ b\end{pmatrix}
=\left[\frac{0(B+C)}{BC}\right]_n.$$
Thus $\Gamma$ has the good value on crosses, and so has $\Lambda$, and this finishes the proof.
\end{proof}

We are now in position of stating and proving one of our main results.

\begin{theorem}
The function $\varphi$ has the following properties:
\begin{enumerate}
\item Extension property: $\varphi(^a_b)=\varphi(^a_b{\ }^0_0)$.

\item Invariance under row permutation: $\varphi(^a_b)=\varphi(^b_a)$.

\item Invariance under column permutation: $\varphi(^a_b)=\varphi(^{\sigma(a)}_{\sigma(b)})$.

\item Invariance under flipping of columns: $\varphi(^a_b{\ }^c_d)=\varphi(^a_b{\ }^d_c)$.

\item Compression property: $\varphi(^a_b{\ }^c_0)=\varphi(^a_b{\ }^{\Sigma c_i}_{\ 0})$.

\item Transmutation: $\varphi_n(^a_b{\ }^c_0)=\varphi_{n+c}(^a_b)$ ($c\in\mathbb N$).

\item Triviality on crosses: $\varphi(^a_b{\ }^c_0)\in\{0,1\}$ ($a,b\in\mathbb N$).

\item Triviality at $n=2$: $\varphi_2(^a_b{\ }^c_d)\in\{-1,0,1\}$ ($a,b,c,d\in\mathbb N$).

\item Symmetry at $q=2$: $\varphi(^a_b{\ }^c_d)$ is symmetric ($a,b,c,d\in\mathbb N$).
\end{enumerate}
\end{theorem}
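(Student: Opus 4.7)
The plan is to verify the nine properties in turn; they fall naturally into four groups, and after the work already done in the section the whole proof reduces to citations plus short computations.

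First I would dispatch (3), (4), (7) by direct appeal to Proposition 5.5, which already establishes that $\varphi$ is a normalization in the sense of Definition 4.2 --- i.e.\ invariant under column permutation, invariant under column flip, and trivial on crosses.

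Second, for the transmutation-type properties (1), (5), (6), the decisive observation is that the transmutation formula for $J$ (Proposition 3.5) and the rational transmutation formula for $\Gamma$ (Lemma 5.4) carry the \emph{same} multiplicative factor $[0(B+C)/BC]_n$. Dividing the two identities makes this factor cancel, yielding $\varphi_n\begin{pmatrix}a&c\\ b&0\end{pmatrix}=\varphi_{n+C}\begin{pmatrix}a\\ b\end{pmatrix}$ for any vector $c\in\mathbb N^s$; restricting to $s=1$ gives (6), while (5) follows because both sides of the desired identity equal $\varphi_{n+\Sigma c_j}(^a_b)$. For (1) the same division applied to a single zero column gives $[0B/B0]_n=1$, so both $J$ and $\Gamma$ are unchanged by an added zero column, hence so is $\varphi$. (Alternatively, $J(^a_b)=J(^a_b{\ }^0_0)$ is direct from the integral, and $\Gamma(^a_b)=\Gamma(^a_b{\ }^0_0)$ is Lemma 5.4 with $C=0$.)

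Third, for row-swap invariance (2), I would combine Proposition 2.6 (row-swap invariance of $J$) with the observation that $\Gamma$ is row-swap invariant by construction: in the definition of $P_r$ the inner product is taken over all choices $i_s\in\{1,2\}$, so each $P_r$ (and also $AB$) is symmetric in the two row labels, hence so is $\Gamma$.

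Finally, for the $q=2$ assertions (8) and (9), I would specialize Definition 5.3 to $q=2$ and check that the resulting $\Gamma$ coincides with the normalization constant of Proposition 4.1 --- this verification is already recorded in the lines immediately following Definition 5.3. Property (9) then reduces verbatim to Proposition 4.1. For (8) I would combine the same matching with Proposition 2.10, whose formula expresses $J_2(^a_b{\ }^c_d)$ as $\varepsilon$ times precisely that constant, so $\varphi_2=\varepsilon\in\{-1,0,1\}$.

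The only real hurdle is the bookkeeping in the second group: the clean cancellation of the $[0(B+C)/BC]_n$ factor between Proposition 3.5 and Lemma 5.4 has to be checked carefully (the latter was proved exactly with this cancellation in mind). Beyond that, each remaining assertion is either a direct citation or a one-line check, and I do not expect any substantive obstacle.
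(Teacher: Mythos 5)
Your proposal is correct and matches the paper's own proof in essentially every respect: the paper likewise derives (2,3,4,7) from Proposition 5.5, derives (1), (5), (6) from the cancellation of the common factor $[0(B+C)/BC]_n$ between Proposition 3.5 and Lemma 5.4, and reduces (8), (9) to Proposition 4.1 via the identification of the $q=2$ normalization constant recorded after Definition 5.3. The only cosmetic difference is in (2), where the paper gets row-swap invariance from Proposition 5.5 (swapping the rows being a simultaneous flip of all columns) while you check the row symmetry of $\Gamma$ and $J$ directly; both are fine.
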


\begin{proof}
All the assertions follow from the results that we already have:

(1) This follows from the rational transmutation formula, at $c=0$.

(2,3,4) These assertions follow from Proposition 5.5.

(5) The rational transmutation formula shows that $\Gamma$ is invariant under compression. Since $J$ has as well this property, this gives the result.

(6) This follows by combining Proposition 3.5 and Lemma 5.4.

(7) This was already done, in Proposition 5.5.

(8,9) This was already done, in Proposition 4.1.
\end{proof}

\section{The three-row case}

In this section we present some advances on the $3\times 3$ case. Let us look first at the elementary expansion formula, at $p=3$. In this formula the combinatorics comes from the pairings of $a+b+c$ elements, and associated to such a pairing are the parameters $r,s,t$, describing the number of ``mixed'' $a-b$, $a-c$, $b-c$ pairings that can appear.

In terms of the vectors formed by the parameters $r,s,t$, the formula looks as follows.

\begin{proposition}
We have the ``three-row elementary expansion formula''
$$J\begin{pmatrix}a\\ b\\ c\end{pmatrix}
=\sum_{rst}K_{rst}'\begin{pmatrix}a\\ b\\ c\end{pmatrix}
J\begin{pmatrix}
1^R&1^S&0^T&A-R-S&0&0\\
1^R&0^S&1^T&0&B-R-T&0\\
0^R&1^S&1^T&0&0&C-S-T
\end{pmatrix},$$
where $A,B,C,R,S,T$ are the sums of entries of the vectors $a,b,c,r,s,t$.
\end{proposition}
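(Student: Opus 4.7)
The plan is to derive this as a direct specialization of the compressed elementary expansion formula of Theorem 2.7 to the case $p=3$, with the only real work being the translation from the abstract index $r^j\in[(a_j,b_j,c_j)^T]$ (a $3\times 3$ symmetric matrix, even on the diagonal, with row sums $a_j,b_j,c_j$) into the concrete triple $(r_j,s_j,t_j)$ of off-diagonal entries used in the statement.

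First I would apply Theorem 2.7 to the column vector $(a,b,c)^T\in M_{3\times q}(\mathbb N)$, which yields
$$J\begin{pmatrix}a\\ b\\ c\end{pmatrix}=\sum_{r\in[(a,b,c)^T]}K'_r\begin{pmatrix}a\\ b\\ c\end{pmatrix}\,J\!\left(E'_r\begin{pmatrix}a\\ b\\ c\end{pmatrix}\right),$$
where by Definition 2.2 the index $r=(r^1,\dots,r^q)$ consists, column by column, of symmetric $3\times 3$ matrices $r^j$ with even diagonal and row/column sums $(a_j,b_j,c_j)$. Since such a $3\times 3$ symmetric matrix is determined by its three off-diagonal entries, I parametrize it by $(r_j,s_j,t_j):=(r^j_{12},r^j_{13},r^j_{23})$, whence the diagonal entries are forced to be
$$r^j_{11}=a_j-r_j-s_j,\qquad r^j_{22}=b_j-r_j-t_j,\qquad r^j_{33}=c_j-s_j-t_j,$$
all required to be non-negative and even; this recovers the combinatorial description mentioned in the preamble to the statement ($r_j,s_j,t_j$ count the mixed $a$--$b$, $a$--$c$, $b$--$c$ pairings in column $j$). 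The coefficient $K'_{rst}$ in the statement is simply the rewriting of $K'_r(a,b,c)^T$ in these new parameters.

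Next I would compute the elementary matrix $E'_r(a,b,c)^T$ from its definition (items (1)--(4) of Theorem 2.7). The columns are block-indexed by pairs $(k,l)$ with $1\le k\le l\le 3$. Writing $R_{kl}=\sum_j r^j_{kl}$, the off-diagonal sums become $R_{12}=R$, $R_{13}=S$, $R_{23}=T$, and the diagonal sums become $R_{11}=A-R-S$, $R_{22}=B-R-T$, $R_{33}=C-S-T$. By item (2), the $(1,2)$-block contributes $R$ columns carrying $1$ in rows $1,2$ and $0$ in row $3$; similarly the $(1,3)$- and $(2,3)$-blocks contribute $S$ and $T$ columns with the displayed patterns. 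By item (3), each diagonal block $(i,i)$ contributes a single column with the scalar $R_{ii}$ in row $i$ and $0$ elsewhere. Assembling these six column-blocks in the order $(1,2),(1,3),(2,3),(1,1),(2,2),(3,3)$ produces exactly the matrix displayed in the statement.

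I do not expect any real obstacle: the content is purely notational, consisting of expanding Theorem 2.7 in a convenient parametrization and reading off the resulting elementary matrix. The only point requiring minor care is checking that the parity/nonnegativity constraints on $r^j$ translate correctly into the admissibility conditions on the triples $(r_j,s_j,t_j)$ (so that the sum really ranges over the intended combinatorial set), but this is immediate from the interpretation of $r_j,s_j,t_j$ as pair counts.
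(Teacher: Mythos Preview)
Your proposal is correct and follows essentially the same route as the paper: the paper's own proof simply states that the result ``follows from Theorem 2.7'' and then records the explicit form of the constants $K'_{rst}$ in the new parameters. Your write-up is actually more detailed in spelling out the translation $r^j\leftrightarrow(r_j,s_j,t_j)$ and the assembly of $E'_r$, but the underlying argument is identical.
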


\begin{proof}
This simply follows from Theorem 2.7. Observe that the constants appearing in the above formula are given by $K'_{rst}=\prod K_{r_js_jt_j}(a_j,b_j,c_j)/(a_j!!b_j!!c_j!!)$, where:
$$K_{rst}(a,b,c)=
\begin{pmatrix}a\\ r\end{pmatrix}
\begin{pmatrix}a\\ s\end{pmatrix}
\begin{pmatrix}b\\ r\end{pmatrix}
\begin{pmatrix}b\\ t\end{pmatrix}
\begin{pmatrix}c\\ s\end{pmatrix}
\begin{pmatrix}c\\ t\end{pmatrix}
r!s!t!(a-r-s)!!(b-r-t)!!(c-s-t)!!.$$
Indeed, this latter quantity counts the number of pairings of $a+b+c$ elements, having $r,s,t$ as the numbers of ``mixed'' $a-b$, $a-c$, $b-c$ pairings.
\end{proof}

The above formula shows, and in a very explicit way, that we are in a situation which is quite similar to the one in the two-row case, investigated in detail in \cite{int}: the compressed elementary matrices are simply the diagonal ones, ``decorated'' with $1$'s. 

So, in view of the results in \cite{int}, we are led to the following two questions.

\begin{problem}
What are the values of $J$ on the diagonal $3\times 3$ matrices? And, once these values are known, can linear algebra handle the ``decoration with $1$'' operation?
\end{problem}

In what follows we will focus on the first question. In order to deal with the diagonal  matrices, we use an ad-hoc linear algebra method. Consider the following functions:
$$J_c(x,y)=J\begin{pmatrix}a&&\\ &b&\\ x&y&c\end{pmatrix}.$$
Here, and in what follows, the matrix entries which are omitted are by definition 0. 

\begin{theorem}
The functions $J_c(x,y)$ satisfy the recurrence
$$J_{c+2}(x,y)=\frac{(L+n)J_c(x,y)-(x+1)J_c(x+2,y)-(y+1)J_c(x,y+2)}{c+n-2},$$
where $L=a+b+c+x+y$, with the following initial data:
$$J_0(x,y)=\left[\frac{0(a+b+x)(a+b+y)}{(a+b)(a+x)(b+y)}\right].$$
\end{theorem}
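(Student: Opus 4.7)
The plan is to apply the orthogonality identity $\sum_{j=1}^n u_{3j}^2 = 1$ inside the integrand defining $I_c(x,y)$, use the compression formula (Proposition~2.4) to handle the extra ``spectator'' terms that arise, and then translate from $I$ to $J$ via the normalization of Definition~2.5. The initial data will fall out directly from the spark formula.

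For the base case, when $c=0$ the third column of the defining matrix is identically zero. Adding or deleting a zero column does not alter the integrand in Definition~1.1 (the new factors are $u_{i,q+1}^0=1$) and preserves the normalization factor in Definition~2.5, since both $\Sigma a_{ij}$ and $\prod a_{ij}!!$ are unchanged (using $0!!=1$). Hence $J$ is invariant under deletion of zero columns, and after also transposing and performing a cyclic column permutation the matrix becomes exactly the spark configuration $\begin{pmatrix}x & a & 0 \\ y & 0 & b\end{pmatrix}$. Proposition~3.6 then delivers the stated value of $J_0(x,y)$.

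For the recurrence, I multiply the integrand defining $I_c(x,y)$ by $1=\sum_{j=1}^n u_{3j}^2$ and split the sum. The three terms $j=1,2,3$ merely boost an already-present exponent and produce $I_c(x+2,y)$, $I_c(x,y+2)$, and $I_{c+2}(x,y)$ respectively. For each $j\geq 4$ the variable $u_{3j}$ does not otherwise appear in the integrand, so by invariance of Haar measure under column swaps inside $\{4,\ldots,n\}$ all $n-3$ such integrals are equal to the single integral
$$I\begin{pmatrix}a&0&0&0\\ 0&b&0&0\\ x&y&c&2\end{pmatrix}.$$
After moving row~3 to the top, its last two columns sit in precisely the configuration required by Proposition~2.4, with compression vector $(c,2)\in(2\mathbb N)^2$. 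Each spectator term is therefore equal to $\frac{c!!\cdot 2!!}{(c+2)!!}I_{c+2}(x,y)=\frac{1}{c+1}I_{c+2}(x,y)$. Summing everything up gives
$$I_c(x,y)=I_c(x+2,y)+I_c(x,y+2)+\frac{c+n-2}{c+1}\,I_{c+2}(x,y).$$

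The final step is to pass to the normalized quantity $J$. All five matrices share the common factor $(n-1)!!\,a!!\,b!!\,c!!\,x!!\,y!!$; the only adjustments come from the semifactorial shifts $(x+2)!!/x!!=x+1$, $(y+2)!!/y!!=y+1$, $(c+2)!!/c!!=c+1$, and $(L+n+1)!!/(L+n-1)!!=L+n$. After dividing the $I$-identity by the common prefactor, the $(c+1)$ coming from $(c+2)!!$ cancels against the $1/(c+1)$ carried by the spectator contribution, and one obtains
$$(L+n)\,J_c(x,y)=(x+1)\,J_c(x+2,y)+(y+1)\,J_c(x,y+2)+(c+n-2)\,J_{c+2}(x,y),$$
which is equivalent to the claimed recurrence after solving for $J_{c+2}(x,y)$. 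The genuinely delicate step is not a computation but a bookkeeping one: correctly separating the three ``input'' terms from the $n-3$ spectator terms in the row-orthogonality expansion and seeing that the latter reduce uniformly via compression. Once that is settled, the semifactorial arithmetic is routine.
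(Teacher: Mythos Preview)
Your proof is correct and follows essentially the same approach as the paper: insert $\sum_j u_{3j}^2=1$ on the third row, isolate the three ``input'' terms and the $n-3$ ``spectator'' terms, reduce the spectators via compression, and translate to $J$ via Definition~2.5, with the initial data coming from the spark formula. The only cosmetic difference is that the paper normalizes first and then compresses at the $J$-level (where compression is an exact equality), whereas you compress first at the $I$-level and then normalize, so that the factor $c+1$ cancels explicitly; the two orderings are equivalent.
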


\begin{proof}
We use the same trick as in the proof of the cross formula. By inserting the trivial quantity $\Sigma u_{3i}^2=1$ on the third row, we obtain:
\begin{eqnarray*}
I\begin{pmatrix}a&&\\ &b&\\ x&y&c\end{pmatrix}
&=&I\begin{pmatrix}a&&\\ &b&\\ x+2&y&c\end{pmatrix}
+I\begin{pmatrix}a&&\\ &b&\\ x&y+2&c\end{pmatrix}\\
&&+I\begin{pmatrix}a&&\\ &b&\\ x&y&c+2\end{pmatrix}
+(n-3)I\begin{pmatrix}a&&\\ &b&\\ x&y&c&2\end{pmatrix}.
\end{eqnarray*}
We perform now the ``soft normalization'' in Definition 2.5. Most of the constants cancel, and we obtain the following formula, where $L=a+b+c+x+y$:
\begin{eqnarray*}
(L+n)J\begin{pmatrix}a&&\\ &b&\\ x&y&c\end{pmatrix}
&=&(x+1)J\begin{pmatrix}a&&\\ &b&\\ x+2&y&c\end{pmatrix}
+(y+1)J\begin{pmatrix}a&&\\ &b&\\ x&y+2&c\end{pmatrix}\\
&&+(c+1)J\begin{pmatrix}a&&\\ &b&\\ x&y&c+2\end{pmatrix}
+(n-3)J\begin{pmatrix}a&&\\ &b&\\ x&y&c&2\end{pmatrix}.
\end{eqnarray*}
The point now is that by doing a compression, we see that the last two $J$ quantities appearing on the right are equal. So, the above formula reads:
$$(L+n)J_c(x,y)=(x+1)J_c(x+2,y)+(y+1)J_c(x,y+2)+(c+n-2)J_{c+2}(x,y).$$
This gives the recurrence formula in the statement. Finally, the initial data is a value of $J$ on a transposed ``spark'', given by the formula in the statement. 
\end{proof}

Summarizing, we have a linear algebra method for computing altogether the quantities of type $J_c(x,y)$, and in particular the ``diagonal'' quantities of type $J_c(0,0)$.

As a first application of this method, let us record the following result.

\begin{proposition}
We have the following formula:
$$J\begin{pmatrix}a&&\\ &b&\\ &&2\end{pmatrix}
=\left[\frac{0(a+b)}{(a+2)(b+2)}\right]
\left(1+\frac{(a+b+n)(a+n-2)(b+n-2)}{n-2}\right).$$
\end{proposition}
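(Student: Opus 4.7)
The plan is to invoke Theorem 6.3 directly. Specializing the recurrence to $c=0$ and $x=y=0$ (so that $L=a+b$) gives
$$J_2(0,0) = \frac{(a+b+n)\,J_0(0,0) - J_0(2,0) - J_0(0,2)}{n-2},$$
and the three initial values on the right are read off from the ``spark'' initial data furnished by the theorem:
$$J_0(0,0) = \left[\frac{0(a+b)}{ab}\right],\quad J_0(2,0) = \left[\frac{0(a+b+2)}{(a+2)b}\right],\quad J_0(0,2) = \left[\frac{0(a+b+2)}{a(b+2)}\right].$$

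Using the identity $(m+2)!!=(m+1)\,m!!$ (in the paper's convention $m!!=(m-1)(m-3)\cdots$), I would rewrite each of these in terms of the common prefactor $J_0(0,0)$: one checks that $J_0(2,0)=\frac{a+b+n-1}{a+n-1}\,J_0(0,0)$ and $J_0(0,2)=\frac{a+b+n-1}{b+n-1}\,J_0(0,0)$. Substituting, the numerator of the recurrence becomes $J_0(0,0)$ times
$$\frac{(a+b+n)(a+n-1)(b+n-1) - (a+b+n-1)(a+b+2n-2)}{(a+n-1)(b+n-1)}.$$
The same double-factorial identity gives $[\tfrac{0(a+b)}{(a+2)(b+2)}] = J_0(0,0)/[(a+n-1)(b+n-1)]$, so after clearing these common factors the entire claim reduces to the polynomial identity
$$(a+b+n)\bigl[(a+n-1)(b+n-1) - (a+n-2)(b+n-2)\bigr] = (n-2) + (a+b+n-1)(a+b+2n-2).$$

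This identity is immediate: the difference $(a+n-1)(b+n-1)-(a+n-2)(b+n-2)$ equals $a+b+2n-3$, so the left-hand side is $(a+b+n)(a+b+2n-3)$. Setting $s:=a+b$ and expanding both sides, the $s^2$ and $s$ coefficients match trivially, and the constant terms collapse to the scalar identity $n(2n-3)=(n-2)+2(n-1)^2$, which holds. The main obstacle is purely bookkeeping: one must scrupulously respect the paper's nonstandard convention $m!!=(m-1)(m-3)\cdots$, under which $(m+2)!!/m!! = m+1$ rather than $m+2$, since confusing this garbles every intermediate expression. No conceptual difficulty arises beyond Theorem 6.3 itself.
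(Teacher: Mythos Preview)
Your proof is correct and follows essentially the same route as the paper's: specialize Theorem~6.3 at $c=x=y=0$, evaluate the three initial values from the spark formula, factor out $J_0(0,0)=[0(a+b)/ab]$, and reduce to a polynomial identity. The paper organizes the final algebra via the substitutions $S=a+b+n$, $A=a+n-2$, $B=b+n-2$ and simplifies $K=S(A+1)(B+1)-(S-1)(S+n-2)$ directly to $SAB+(n-2)$, whereas you rearrange to the equivalent identity $(a+b+n)(a+b+2n-3)=(n-2)+(a+b+n-1)(a+b+2n-2)$; but these are the same computation.
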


\begin{proof}
We use Theorem 6.3, with $c=x=y=0$. We have $L=a+b$, and the quantity in the statement is therefore given by:
\begin{eqnarray*}
J_2(0,0)
&=&\frac{1}{n-2}\left((a+b+n)J_0(0,0)-J_0(2,0)-J_0(0,2)\right)\\
&=&\frac{1}{n-2}\left((a+b+n)\left[\frac{0(a+b)}{ab}\right]
-\left[\frac{0(a+b+2)}{(a+2)b}\right]
-\left[\frac{0(a+b+2)}{a(b+2)}\right]\right)\\
&=&\frac{1}{n-2}\left[\frac{0(a+b)}{ab}\right]
\left((a+b+n)-\frac{a+b+n-1}{a+n-1}-\frac{a+b+n-1}{b+n-1}\right).
\end{eqnarray*}
Thus we have a formula of the following type:
$$J_2(0,0)=\left[\frac{0(a+b)}{(a+2)(b+2)}\right]\cdot\frac{K}{n-2}.$$
More precisely, with $S=a+b+n$, $A=a+n-2$, $B=b+n-2$, we have:
\begin{eqnarray*}
K
&=&S(A+1)(B+1)-(S-1)(S+n-2)\\
&=&S(AB+A+B+1)-(S-1)(S+n-2)\\
&=&S(AB+S+n-3)-(S-1)(S+n-2)\\
&=&SAB+S(S+n-3)-(S-1)(S+n-2)\\
&=&SAB+(n-2).
\end{eqnarray*}
This gives the formula in the statement.
\end{proof}

The above computation is quite instructive, because it shows that we cannot have a flipping principle in the $3\times 3$ case. Indeed, by flipping the ``2'' entry of the matrix in Proposition 6.4 we obtain a cross, and the value of $J$ on this cross is known to be a product of factorials. On the other hand, the value of $J$ on the matrix in Proposition 6.4 is not a product of factorials. Thus, we cannot have a $3\times 3$ flipping principle.

\section{Moments of 3 coordinates}

The joint moments of $u_{11},u_{22}$ were explicitely computed in \cite{int}. In this section we present an explicit formula for the joint moments of $u_{11},u_{22},u_{33}$.

We begin our study by solving the recurrence in Theorem 6.3.

\begin{proposition}
We have the integration formula
\begin{eqnarray*}
J\begin{pmatrix}a&&\\ &b&\\ x&y&c\end{pmatrix}
&=&\sum_{k=0}^{c/2}(-1)^k\begin{pmatrix}c/2\\ k\end{pmatrix}
\sum_{r+s=k}
\begin{pmatrix}k\\ r\end{pmatrix}
\left[\frac{x+2r}{x}\right]_2\left[\frac{y+2s}{y}\right]_2\\
&&\left[\frac{0}{c}\right]_{n-1}
\cdot\left[\frac{L}{L-c+2k}\right]_{n+1}
\cdot\left[\frac{0(a+b+x+2r)(a+b+y+2s)}{(a+b)(a+x+2r)(b+y+2s)}\right]_n,
\end{eqnarray*}
where $L=a+b+c+x+y$.
\end{proposition}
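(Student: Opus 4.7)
The plan is to proceed by induction on $c$ in steps of $2$, using the recurrence from Theorem~6.3. Denote the right-hand side of the claim by $F_c(x,y)$. The base case $c=0$ is immediate: only the term $k=0$ (hence $r=s=0$) survives, the $k$- and $c$-dependent brackets reduce to $1$ via $[0/0]_{n-1}=1$ and $[L/L]_{n+1}=1$, and what is left is precisely the initial data $J_0(x,y)$ stipulated by Theorem~6.3.

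For the inductive step, assuming $J_c=F_c$ for all admissible $x,y$, I would verify
$$(c+n-2)F_{c+2}(x,y) = (L+n)F_c(x,y) - (x+1)F_c(x+2,y) - (y+1)F_c(x,y+2).$$
The right-hand side is a sum of three closed-form expansions, each indexed by $(k,r,s)$ with $r+s=k$ and $0\le k\le c/2$; the left-hand side is one such expansion with $0\le k\le c/2+1$. The key observation is that the shift $(x,y)\mapsto(x+2,y)$ converts the summand of $F_c$ at $(r,s)$ into one involving $J_0(x+2r+2,y+2s)$, via the identity $(x+2r+2)!!/(x+2)!!=\frac{x+2r+2}{x+2}\,(x+2r)!!/x!!$; after reindexing this contributes to the pair $(r+1,s)$ of the expansion based at $(x,y)$. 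The analogous statement holds for $(x,y)\mapsto(x,y+2)$. The factor $[L/(L-c+2k)]_{n+1}$ transforms explicitly under $L\mapsto L+2$ with $c$ fixed (on the right) and under $L\mapsto L+2,\ c\mapsto c+2$ (on the left), yielding clean rational multipliers. Finally, Pascal's rule $\binom{c/2+1}{k}=\binom{c/2}{k}+\binom{c/2}{k-1}$ accounts for the extended top range on the left.

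After collecting these shifts, the prefactors $[0/c]_{n-1}$, $[L/(L-c+2k)]_{n+1}$, $[x+2r/x]_2[y+2s/y]_2$, and $J_0(x+2r,y+2s)$ factor out compatibly on both sides (up to the explicit rational multipliers computed above), and the identity collapses to a scalar rational identity in the variables $x,y,c,r,s,a,b,n$ that can be checked by direct polynomial algebra.

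The main obstacle will be the bookkeeping of the reindexing: each of the three terms on the right of the recurrence contributes to the coefficient of a given $(k,r,s)$-triple on the left in a distinct way, and boundary effects at $k=0$ and $k=c/2+1$ must be handled carefully. A useful sanity check comes from the identity $u_{33}^c=(1-u_{31}^2-u_{32}^2-\sum_{j\ge 4}u_{3j}^2)^{c/2}$, which, combined with the horizontal compression of Proposition~2.4, yields a triple-index sum over $(r,s,t)$ with $r+s+t=c/2$; our formula's double sum must arise from this triple sum after explicit summation over $t$, and the inductive verification makes the collapsing combinatorial identity implicit but correct.
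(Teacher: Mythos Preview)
Your approach is sound in outline and uses the same ingredients as the paper: induction via the recurrence of Theorem~6.3 with the spark formula as initial data. The paper, however, runs the induction in the other direction: rather than verifying that the claimed closed form $F_c$ satisfies the recurrence, it iterates the recurrence $k$ times to obtain the intermediate identity
\[
J_c(x,y)=\frac{(c-2k+n-3)!!}{(c+n-3)!!}\sum_{r+s+t=k}(-1)^{r+s}\frac{k!}{r!s!t!}\cdot\frac{(L+n-1)!!}{(L-2t+n-1)!!}\cdot\frac{(x+2r)!!}{x!!}\cdot\frac{(y+2s)!!}{y!!}\cdot J_{c-2k}(x+2r,y+2s),
\]
then sets $k=c/2$ and substitutes the known value of $J_0$. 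This forward unrolling \emph{derives} the formula rather than verifying it, and the multinomial coefficient $k!/(r!s!t!)$ absorbs all the reindexing at once, so the boundary bookkeeping at $k=0$ and $k=c/2+1$ that you flag as the main obstacle never arises. Your verification route is logically equivalent but heavier; the sanity-check binomial expansion you mention at the end is in fact essentially the paper's triple-sum intermediate step.

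One slip: with the paper's convention $m!!=(m-1)(m-3)\cdots$, the identity you invoke should read
\[
\frac{(x+2r+2)!!}{(x+2)!!}=\frac{x+2r+1}{x+1}\cdot\frac{(x+2r)!!}{x!!},
\]
not with the factor $(x+2r+2)/(x+2)$. This is precisely the quantity absorbed by the $(x+1)$ appearing in the recurrence, so the computation still goes through once corrected.
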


\begin{proof}
According to Theorem 6.3, with $L=a+b+c+x+y$, we have:
$$J_{c+2}(x,y)=\frac{(L+n)J_c(x,y)-(x+1)J_c(x+2,y)-(y+1)J_c(x,y+2)}{c+n-2}.$$
By applying this formula $k$ times, we obtain:
\begin{eqnarray*}
J_{c+2}(x,y)
&=&\frac{(c-2k+n-1)!!}{(c+n-1)!!}\sum_{r+s+t=k}(-1)^{r+s}\frac{k!}{r!s!t!}\cdot\frac{(L+n+1)!!}{(L-2t+n+1)!!}\\
&&\frac{(x+2r)!!}{x!!}\cdot\frac{(y+2s)!!}{y!!}\cdot J_{c-2k+2}(x+2r,y+2s).
\end{eqnarray*}
With the change $c\to c-2$, this formula becomes:
\begin{eqnarray*}
J_c(x,y)
&=&\frac{(c-2k+n-3)!!}{(c+n-3)!!}\sum_{r+s+t=k}(-1)^{r+s}\frac{k!}{r!s!t!}\cdot\frac{(L+n-1)!!}{(L-2t+n-1)!!}\\
&&\frac{(x+2r)!!}{x!!}\cdot\frac{(y+2s)!!}{y!!}\cdot J_{c-2k}(x+2r,y+2s).
\end{eqnarray*}
With the choice $k=c/2$, this formula becomes:
\begin{eqnarray*}
J_c(x,y)
&=&\frac{(n-3)!!}{(c+n-3)!!}\sum_{r+s+t=c/2}(-1)^{r+s}\frac{(c/2)!}{r!s!t!}\cdot\frac{(L+n-1)!!}{(L-2t+n-1)!!}\\
&&\frac{(x+2r)!!}{x!!}\cdot\frac{(y+2s)!!}{y!!}\cdot J_0(x+2r,y+2s).
\end{eqnarray*}
By using the initial data in Theorem 6.3, we get:
\begin{eqnarray*}
J\begin{pmatrix}a&&\\ &b&\\ x&y&c\end{pmatrix}
&=&\frac{(n-3)!!}{(c+n-3)!!}\sum_{r+s+t=c/2}(-1)^{r+s}\frac{(c/2)!}{r!s!t!}\cdot\frac{(L+n-1)!!}{(L-2t+n-1)!!}\\
&&\frac{(x+2r)!!}{x!!}\cdot\frac{(y+2s)!!}{y!!}
\cdot\left[\frac{0(a+b+x+2r)(a+b+y+2s)}{(a+b)(a+x+2r)(b+y+2s)}\right].
\end{eqnarray*}
With $k=r+s$, this gives the formula in the statement.
\end{proof}

\begin{theorem}
The joint moments of $u_{11},u_{22},u_{33}$ are given by
\begin{eqnarray*}
J\begin{pmatrix}a&&\\ &b&\\ &&c\end{pmatrix}
&=&\sum_{k=0}^{c/2}(-1)^k\begin{pmatrix}c/2\\ k\end{pmatrix}
\frac{k!}{2^k}
\sum_{r+s=k}
\begin{pmatrix}2r\\ r\end{pmatrix}\begin{pmatrix}2s\\ s\end{pmatrix}\\
&&\left[\frac{0}{c}\right]_{n-1}
\cdot\left[\frac{a+b+c}{a+b+2k}\right]_{n+1}
\cdot\left[\frac{0(a+b+2r)(a+b+2s)}{(a+b)(a+2r)(b+2s)}\right]_n,
\end{eqnarray*}
where $J$ denotes as usual the normalization of $I$ made in Definition 2.4.
\end{theorem}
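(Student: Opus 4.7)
The plan is to obtain this statement as the specialization of Proposition 7.1 to $x=y=0$. Since Proposition 7.1 already gives a closed formula for $J_c(x,y)$ with arbitrary $x,y$, once $x=y=0$ is substituted the entire proof reduces to simplifying the combinatorial coefficients appearing in the sum. So the theorem will follow without any fresh recurrence argument; I only need to track the prefactors carefully.

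First I would set $x=y=0$ in the formula of Proposition 7.1. The parameter $L=a+b+c+x+y$ collapses to $L=a+b+c$, which immediately turns the middle bracket $[L/(L-c+2k)]_{n+1}$ into $[(a+b+c)/(a+b+2k)]_{n+1}$, matching the desired factor. The first bracket $[0/c]_{n-1}$ is independent of $x,y$, and the last geometric bracket specializes verbatim to
\[
\left[\frac{0(a+b+2r)(a+b+2s)}{(a+b)(a+2r)(b+2s)}\right]_n,
\]
which is exactly the factor on the right-hand side of Theorem 7.2.

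The main (and essentially only) step is to rewrite the coefficient
\[
\binom{k}{r}\left[\frac{2r}{0}\right]_2\left[\frac{2s}{0}\right]_2
\]
in the claimed form $\tfrac{k!}{2^k}\binom{2r}{r}\binom{2s}{s}$. Using the convention $m!!=(m-1)(m-3)\ldots$ of the paper, one has $[2r/0]_2=(2r)!!=(2r-1)(2r-3)\cdots 1=(2r)!/(2^r r!)$, and similarly for $s$. Hence
\[
\left[\frac{2r}{0}\right]_2\left[\frac{2s}{0}\right]_2
=\frac{r!\,s!}{2^{r+s}}\binom{2r}{r}\binom{2s}{s},
\]
and multiplying by $\binom{k}{r}=k!/(r!s!)$ produces exactly $(k!/2^k)\binom{2r}{r}\binom{2s}{s}$, as $r+s=k$. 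Substituting this back gives the formula stated in Theorem 7.2.

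There is no real obstacle: the only risk is a bookkeeping slip between the paper's nonstandard convention $m!!=(m-1)(m-3)\ldots$ and the usual odd/even double factorial identities, so I would double-check the reduction $(2r)!!=(2r)!/(2^r r!)$ under this convention before combining with $\binom{k}{r}$. Once that identity is verified, the theorem is merely the substitution $x=y=0$ into Proposition 7.1, with no additional analytic or combinatorial input required.
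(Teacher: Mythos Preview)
Your proposal is correct and follows essentially the same route as the paper's own proof: both specialize Proposition 7.1 to $x=y=0$ and then simplify the coefficient $\binom{k}{r}[2r/0]_2[2s/0]_2$ via the identity $(2r)!!=(2r)!/(2^r r!)$ to obtain $\tfrac{k!}{2^k}\binom{2r}{r}\binom{2s}{s}$. The only difference is presentational, and your caution about the paper's shifted double-factorial convention is well placed but ultimately harmless here.
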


\begin{proof}
We use Proposition 7.1 at $x=y=0$. The second line of terms is the one in the statement, and the coefficient after the first sum is:
$$\begin{pmatrix}k\\ r\end{pmatrix}
\left[\frac{2r}{0}\right]_2\left[\frac{2s}{0}\right]_2
=\frac{k!}{r!s!}(2r)!!(2s)!!
=\frac{k!}{r!s!}\cdot\frac{(2r)!}{2^rr!}\cdot\frac{(2s)!}{2^ss!}
=\frac{k!}{2^k}\begin{pmatrix}2r\\ r\end{pmatrix}\begin{pmatrix}2s\\ s\end{pmatrix}.$$
Thus we get the formula in the statement.
\end{proof}

The above formula is of course not very beautiful, but can be quickly implemented on a computer. The experiments based on it lead for instance to the following conjecture.

\begin{conjecture}
For any $a\geq b\geq c$, the following quantity is an integer:
$$\left[\frac{c}{0}\right]_{n-1}\left[\frac{b+c}{0}\right]_nJ\begin{pmatrix}a&&\\ &b&\\ &&c\end{pmatrix}.$$
\end{conjecture}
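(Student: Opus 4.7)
My plan is to start from the explicit formula of Theorem~7.2 and track denominators. Multiplying by $\left[\tfrac{c}{0}\right]_{n-1}\left[\tfrac{b+c}{0}\right]_n$ cancels the $\left[\tfrac{0}{c}\right]_{n-1}$ factor inside every summand and inserts a uniform $(b+c+n-2)!!/(n-2)!!$, giving
\begin{align*}
Q=\sum_{k=0}^{c/2}(-1)^k\binom{c/2}{k}\frac{k!}{2^k}\sum_{r+s=k}\binom{2r}{r}\binom{2s}{s}\,F_{k,r,s}(n),
\end{align*}
where
\begin{align*}
F_{k,r,s}(n)=\frac{(a+b+c+n-1)!!\,(b+c+n-2)!!\,(a+b+2r+n-2)!!\,(a+b+2s+n-2)!!}{(a+b+2k+n-1)!!\,(a+b+n-2)!!\,(a+2r+n-2)!!\,(b+2s+n-2)!!}.
\end{align*}
Three of the four ratios of same-parity double factorials are polynomials in $n$, of respective degrees $c/2-k$, $b/2$ and $a/2$; only the fourth ratio $(b+c+n-2)!!/(a+b+n-2)!!$ is a reciprocal polynomial (of degree $(a-c)/2$, since $a\ge c$), contributing $(a-c)/2$ apparent simple poles to $F_{k,r,s}(n)$ coming from the factors $a+b+n-3,\,a+b+n-5,\ldots,\,b+c+n-1$.

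The first and key step is to show that these potential poles cancel when summed over $(k,r,s)$, so that $Q\in\mathbb Q[n]$. At each pole $n=n_0$ the residue is a double sum in which the alternating factor $(-1)^k\binom{c/2}{k}$ combines with the Catalan-like kernel $\binom{2r}{r}\binom{2s}{s}$; I expect the vanishing to follow from a Chu--Vandermonde-type identity obtained by reindexing so that terms cluster into telescoping pairs. This is by far the hardest step, since the double sum mixes three independent combinatorial structures and is not of any immediately classical hypergeometric type; Zeilberger's algorithm ought to produce a WZ-certificate mechanically, but a human-readable proof may require a clever symmetry or bijective interpretation.

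Once polynomiality is established, I would prove integrality of $Q(n)$ at integer $n$ by induction on $c$, using the recurrence of Theorem~6.3. The key observation is that $\left[\tfrac{c+2}{0}\right]_{n-1}/\left[\tfrac{c}{0}\right]_{n-1}=c+n-2$, exactly the denominator appearing in that recurrence, so multiplication by the conjectured prefactor absorbs the division and reduces integrality at $c+2$ to integrality at $c$, together with the integrality of suitably normalized versions of the off-diagonal quantities $J_c(2,0)$ and $J_c(0,2)$. These can be reexpressed via the compression formula (Lemma~5.4), the cross formula (Proposition~2.9) and the spark formula (Proposition~3.6); together with the base case $c=2$ (Proposition~6.4), which one checks by direct expansion to be an integer-coefficient polynomial in $n$ after the conjectured normalization, this should close the induction.

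The principal obstacle is the pole cancellation. A backup strategy, if the explicit identity proves elusive, is representation-theoretic: interpret $Q$ as the trace against a distinguished element of the Brauer algebra of an integer matrix whose entries are suitably rescaled Weingarten values. The asymmetry of the conjectured prefactor, singling out the two smallest parameters $b$ and $c$, is suggestive here: it hints that $a$ plays the role of an ambient dimension, and that the integrality of $Q$ should reflect a counting statement for Brauer diagrams of sizes $b$ and $c$ sitting inside an $a$-dimensional block.
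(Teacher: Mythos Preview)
The statement you are attempting is Conjecture~7.3, and the paper contains no proof of it: the authors write explicitly ``We do not know for the moment how to prove this conjecture.'' So there is nothing to compare your argument against; you are proposing a strategy for an open problem, not re-deriving a known result.

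As a strategy, your outline has two genuine gaps that would need to be closed before it could be called a proof. First, the pole-cancellation step is the entire content of the problem, and you do not carry it out: you say you ``expect'' a Chu--Vandermonde-type identity and that Zeilberger's algorithm ``ought to'' produce a certificate, but none is exhibited. The double sum over $(k,r,s)$ with the mixed kernel $(-1)^k\binom{c/2}{k}\binom{2r}{r}\binom{2s}{s}$ and the four double-factorial ratios is not of a standard hypergeometric shape, and there is no a priori reason the residues at the $(a-c)/2$ candidate poles should vanish termwise or telescope; this step is precisely where the authors' computer experiments stop and a proof would have to begin.

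Second, your induction on $c$ via Theorem~6.3 does not close. The recurrence at $x=y=0$ involves the off-diagonal quantities $J_c(2,0)$ and $J_c(0,2)$, which are genuine three-row integrals
\[
J\begin{pmatrix}a&&\\ &b&\\ 2&0&c\end{pmatrix},\qquad
J\begin{pmatrix}a&&\\ &b&\\ 0&2&c\end{pmatrix},
\]
not covered by the inductive hypothesis (which concerns only the diagonal case $x=y=0$). The formulas you invoke to handle them---the cross formula (Proposition~2.9), the spark formula (Proposition~3.6), and the transmutation Lemma~5.4---are all \emph{two-row} results and do not apply here. Moreover, the prefactor at $c+2$ differs from the one at $c$ by $(c+n-2)(b+c+n-1)$, not just $(c+n-2)$, so even after cancelling the denominator of the recurrence you acquire an extra factor $(b+c+n-1)$ multiplying the off-diagonal terms, and you would need a separate integrality statement for those, with its own prefactor. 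Without a simultaneous induction on a larger family (at minimum all $J_c(x,y)$ with $x,y\in\{0,2\}$, each with its own conjectural normalization), the scheme does not go through.
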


We do not know for the moment how to prove this conjecture. Nor do we have an advance on the quite similar conjectures formulated in \cite{int}, in the $2\times 2$ case. We believe that a good framework for the study of these questions is that of the upper triangular $3\times 3$ matrices, but for the moment we do not have an explicit integration formula here.

\section{Hyperspherical variables}

In this section and in the next one we explore yet another point of view on the computation of the integrals $I(a)$. We will develop here some direct geometric techniques.

Our starting point is the Euler-Rodrigues formula. This formula parametrizes the rotations of $\mathbb R^3$, by points of the sphere $S^3\subset\mathbb R^4$, as follows:
$$u=\begin{pmatrix}
x^2+y^2-z^2-t^2&2(yz-xt)&2(xz+yt)\\
2(xt+yz)&x^2+z^2-y^2-t^2&2(zt-xy)\\
2(yt-xz)&2(xy+zt)&x^2+t^2-y^2-z^2
\end{pmatrix}.$$
Here $u\in SO_3$ is an arbitrary rotation of $\mathbb R^3$, and $(x,y,z,t)\in S^3$, which is uniquely determined up to a sign change, comes from the double cover $S^3=SU_2\to SO_3$.

A natural problem, to be explored in what follows, is whether we have similar models in higher dimensions. For instance one can ask for a ``probabilistic'' modelling of the standard matrix coordinates of the group $SO_{n+1}$ with $n\in\mathbb N$ arbitrary, by variables over the sphere $S^{2n-1}$. So, consider the space $\mathbb R^N$, with the standard coordinates denoted $x_1,\ldots,x_N$. When $N$ is small, or when we will not need all these coordinates, these will be simply denoted $x,y,z,\ldots$ We denote by $S^{N-1}\subset\mathbb R^N$ the sphere.

\begin{proposition}
The following variables have the same law:
\begin{enumerate}
\item $\Sigma x_i^2-\Sigma y_i^2:S^{2n-1}\to\mathbb R$,

\item $x:S^n\to\mathbb R$.
\end{enumerate}
\end{proposition}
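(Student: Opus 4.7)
The plan is to identify both sides as random variables on $[-1,1]$ with the common density proportional to $(1-t^2)^{n/2-1}$. I would use the standard Gaussian realization of the uniform measure on a sphere: if $Z_1,\ldots,Z_N$ are i.i.d.\ $N(0,1)$, then $(Z_1,\ldots,Z_N)/\sqrt{\sum Z_i^2}$ is uniform on $S^{N-1}$.

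Applying this to $S^{2n-1}$, the quantity $U=\sum_{i=1}^n x_i^2$ becomes the ratio $\chi^2_n/(\chi^2_n+\chi^2_n)$ of two independent chi-square variables, hence $U\sim\mathrm{Beta}(n/2,n/2)$. The variable in (1) is therefore $F=2U-1$, taking values in $[-1,1]$ with density proportional to $((1+f)/2)^{n/2-1}((1-f)/2)^{n/2-1}$, i.e.\ proportional to $(1-f^2)^{n/2-1}$.

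Applied to $S^n$ (which sits in $\mathbb{R}^{n+1}$), the same trick gives $x^2\sim\mathrm{Beta}(1/2,n/2)$, with density proportional to $v^{-1/2}(1-v)^{n/2-1}$ on $[0,1]$. Because $x$ is symmetric about $0$, the change of variables $v=x^2$, whose Jacobian factor $|x|$ cancels the $v^{-1/2}$ factor coming from the Beta density, yields a density for $x$ on $[-1,1]$ proportional to $(1-x^2)^{n/2-1}$.

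Since the two densities are proportional to the same function on $[-1,1]$ and both integrate to $1$, they coincide, proving equality of laws. The argument is essentially routine once the Gaussian normalization trick is in place; the only potentially delicate step is the $v=x^2$ change of variable in the second part. As a cross-check fitting the combinatorial framework of the paper, one could alternatively match all polynomial moments using Proposition 1.6, which directly gives $E_{S^n}[x^k]=\varepsilon\cdot n!!\,k!!/(k+n)!!$, and verify that $E[F^k]$ obtained from the Beta moments of $U$ reduces to the same value.
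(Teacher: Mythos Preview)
Your argument is correct. The Gaussian normalization indeed gives $U=\sum_{i=1}^n x_i^2\sim\mathrm{Beta}(n/2,n/2)$ on $S^{2n-1}$ and $x^2\sim\mathrm{Beta}(1/2,n/2)$ on $S^n$, and the two changes of variable you perform produce the common density $c(1-t^2)^{n/2-1}$ on $[-1,1]$; the Jacobian bookkeeping in the $v=x^2$ step is fine once one uses the symmetry of $x$ about $0$.

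This is a genuinely different route from the paper's. The paper argues geometrically: it writes $S^{2n-1}$ as the spherical join of two copies of $S^{n-1}$, obtaining the metric $g_{2n-1}=\cos^2 t\,g_{n-1}+\sin^2 t\,g_{n-1}+dt^2$, reads off the density $c\sin^{n-1}(2t)\,dt$ of the join parameter $t$, and then observes that the variable in question is $\cos 2t$, whence its law is $c(1-u^2)^{n/2-1}\,du$. The identification with a coordinate on $S^n$ is then immediate from the standard density of a single spherical coordinate. Your probabilistic derivation via chi-square ratios and Beta laws is more self-contained and arguably more elementary; it also makes the appearance of the Beta family transparent. The paper's geometric picture, on the other hand, generalizes directly to the complex statement that follows (the law of $\sum z_i^2$), where the same join decomposition is reused with an extra angular variable, so it integrates more smoothly with the next proof in the section.
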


\begin{proof}
We can write the metric of $S^{2n-1}$ in terms of the metrics of two copies of $S^{n-1}$ at distance $\pi/2$:
$$g_{2n-1}=\cos^2t\cdot g_{n-1}(x)+\sin^2t\cdot g_{n-1}(y)+dt^2.$$
Here $t$ is the ``position'' between the two copies of $S^{n-1}$, which is $0$ on one and $\pi/2$ on the other. This shows that the density of $t$ is given by:
$$c_n^2 \cos^{n-1}t\sin^{n-1}t\,dt=c\sin^{n-1}2t\,dt.$$
Here $c_n$ is the volume de $S^{n-1}$, and $c$ is another constant. Now our function is:
$$u=\cos^2t-\sin^2t=\cos 2t.$$
This shows that the law of $u$ is $c(1-u^2)^{n/2-1}\,du$, and this gives the result.
\end{proof}

The above result is quite interesting, because at $n=2$ we can obtain exactly, modulo some rotations, the 9 variables appearing in the Euler-Rodrigues formula. This will be explained in detail in the next section. For the moment let us present a ``complex variable'' extension of Proposition 8.1, to be used as well in the next section. We denote the coordinates of $\mathbb R^{2n}$ by $x_1,\ldots,x_n,y_1,\ldots,y_n$. We use as well the complex coordinates $z_i=x_i+iy_i$, coming from the identification $\mathbb R^{2n}=\mathbb C^n$.

\begin{theorem}
The following variables have the same law:
\begin{enumerate}
\item $\Sigma z_i^2:S^{2n-1}\to\mathbb C$,

\item $z:S^n\to\mathbb C$.
\end{enumerate}
\end{theorem}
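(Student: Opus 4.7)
The plan is to extend the geometric decomposition underlying Proposition 8.1, so as to keep track of the real and imaginary parts of $\sum z_i^2$ simultaneously. Writing $z_i = x_i + iy_i$ and decomposing $S^{2n-1} \subset \mathbb{R}^n \oplus \mathbb{R}^n$ via $x = \cos t \cdot X'$, $y = \sin t \cdot Y'$ with $X', Y' \in S^{n-1}$ and $t \in [0, \pi/2]$, the uniform measure on $S^{2n-1}$ factors as a constant multiple of $\cos^{n-1}t\sin^{n-1}t\,dt$ against $d\sigma(X')\,d\sigma(Y')$. A direct calculation then gives
$$\sum z_i^2 = (\cos^2 t - \sin^2 t) + 2i\cos t \sin t\,\langle X', Y'\rangle = \cos 2t + i\sin 2t \cdot \langle X', Y' \rangle.$$

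Setting $s = 2t \in [0, \pi]$ and $\xi = \langle X', Y'\rangle \in [-1, 1]$, the product structure of the measure makes $s$ and $\xi$ independent: the marginal of $s$ is proportional to $\sin^{n-1}s\,ds$, and by $SO(n)$-invariance we may take $X' = e_1$, so $\xi$ is distributed as one coordinate of a uniform vector on $S^{n-1}$; Proposition 8.1, applied with $n$ replaced by $n-1$, gives the density $c(1-\xi^2)^{(n-3)/2}$ on $[-1,1]$. I would then push forward by $(s,\xi) \mapsto w = \cos s + i\xi\sin s = \sum z_i^2$. The Jacobian of this map has absolute value $\sin^2 s$, and the key algebraic identity
$$1 - |w|^2 = 1 - \cos^2 s - \xi^2 \sin^2 s = \sin^2 s \,(1 - \xi^2)$$
causes the density of $w$ to collapse to
$$c \cdot \frac{\sin^{n-1}s \cdot (1-\xi^2)^{(n-3)/2}}{\sin^2 s} = c\bigl(\sin^2 s \cdot (1-\xi^2)\bigr)^{(n-3)/2} = c(1-|w|^2)^{(n-3)/2}$$
on the unit disk $|w|\leq 1$.

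Finally, I would compare this with the variable in (2): interpreting $z$ as formed from two coordinates of a point of $S^n \subset \mathbb{R}^{n+1}$, integrating out the remaining $n-1$ coordinates against $\delta(|x|^2 - 1)$ yields the density $c(1-|z|^2)^{(n-3)/2}$ on the unit disk, matching the computation above. Since both laws are probability measures with the same unnormalized density, they must coincide. The main obstacle is precisely the Jacobian-plus-identity step: it is the algebraic collapse $1 - |w|^2 = \sin^2 s\,(1-\xi^2)$ that makes the factors $\sin^{n-1} s$ and $(1-\xi^2)^{(n-3)/2}$ combine into the single expression in $|w|$ after dividing by $\sin^2 s$, and without this exact cancellation between the two exponents the two laws simply would not agree.
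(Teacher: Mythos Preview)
Your proof is correct and follows essentially the same route as the paper. Both arguments use the join decomposition $x=\cos t\cdot X'$, $y=\sin t\cdot Y'$ with $X',Y'\in S^{n-1}$, identify $\sum z_i^2=\cos 2t+i\sin 2t\cdot\langle X',Y'\rangle$, observe that $\langle X',Y'\rangle$ has density $c(1-\xi^2)^{(n-3)/2}$, and arrive at the law $c(1-|w|^2)^{(n-3)/2}$ on the unit disk; the only cosmetic difference is that the paper computes the marginal of $X=\cos 2t$ and then the conditional of $Y$ given $X$, whereas you push forward jointly through the Jacobian $|\partial(X,Y)/\partial(s,\xi)|=\sin^2 s$ and invoke the identity $1-|w|^2=\sin^2 s\,(1-\xi^2)$ to collapse the density in one step.
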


\begin{proof}
Let $N=2n$, and $c_N$ be the volume of $S^{N-1}$. We will prove that for both variables in the statement, if we write them as $Z=X+iY$, then the law of $(X,Y)$ is given by: 
$$d\mu(X,Y)=\frac{c_{n-1}c_n}{2^{n-1}c_{2n}}(1-X^2-Y^2)^{(n-3)/2}\,dXdY.$$

(1) Let $S_1,S_2$ be the intersections of $S^{2n-1}$ with $\mathbb R^n,(i\mathbb R)^n$. Let $g_1,g_2$ be the metrics on these spheres, and let $\theta$ be the distance to $S_1$. The metric on $S^{2n-1}$ can be written as: 
$$d\theta^2+\cos^2 \theta\cdot g_1+\sin^2\theta\cdot g_2.$$
This shows that the law of $\theta$ is given by:
$$\mu(\theta)=\frac{c_n^2}{c_{2n}}\cos^{n-1}\theta\sin^{n-1}\theta\,d\theta
= \frac{c_n^2}{2^{n-1}c_{2n}}\sin^{n-1}2\theta\,d\theta.$$
We have $X=\cos^2\theta-\sin^2\theta=\cos 2\theta$, so the law of $X$ is:
$$\mu(X)=\frac{c_n^2}{2^{n-1}c_{2n}}\sin^{n-2}2\theta\,dX=\frac{c_n^2}{2^{n-1}c_{2n}}(1-X^2)^{(n-2)/2}dX.$$
We now consider $X$ fixed. For $z=(z_1,\ldots, z_n)\in S^{2n-1}$,  we set $u=Re(z)/||Re(z)||$ and $v=Im(z)/||Im(z)||$. Let $\alpha$ be the angle between $u$ and $v$. The law of $\alpha$ is:
$$\frac{c_{n-1}}{c_n}\sin^{n-2}\alpha\,d\alpha.$$
Now by definition of $Y$ we have:
$$Y=2\sin\theta\cos\theta\langle u,v\rangle=\sin 2\theta\cos\alpha.$$
So the law of $Y$, for $X$ fixed, is: 
\begin{eqnarray*}
\nu_X(Y) 
&=&\frac{c_{n-1}}{c_n}\cdot\frac{\sin^{n-3}\alpha}{\sin 2\theta}\,dY\\
&=&\frac{c_{n-1}}{c_n}\cdot\frac{\sin^{n-3}\alpha}{\sin 2\theta}\,dY\\
&=&\frac{c_{n-1}}{c_n}\cdot\frac{(1-Y^2/\sin^22\theta)^{(n-3)/2}}{\sin 2\theta}\,dY\\
&=&\frac{c_{n-1}}{c_n}\cdot\frac{(1-X^2-Y^2)^{(n-3)/2}}{(1-X^2)^{(n-3)/2}\sqrt{1-X^2}}\,dY\\
&=&\frac{c_{n-1}}{c_n}\cdot\frac{(1-X^2-Y^2)^{(n-3)/2}}{(1-X^2)^{(n-2)/2}}\,dY.
\end{eqnarray*}
Together with the law of $X$ found above, this gives the result.

(2) Let $\beta_1$ be the angle with $e_1=(1,0,\ldots,0)$, so that $x_1=\cos\beta_1$. The law of $\beta_1$ is:  
$$\mu(\beta_1)=\frac{c_n}{c_{n+1}}\sin^{n-1}\beta_1\,d\beta_1.$$
The law of $x_1$ is therefore: 
$$\mu(x_1)= \frac{c_n}{c_{n+1}}\sin^{n-2}\beta_1\,dx_1
=\frac{c_n}{c_{n+1}}(1-x_1^2)^{(n-2)/2}\,dx_1.$$
Now let $x_1$ be fixed, and let $\beta_2$ be the angle with $e_2=(0,1,0,\ldots,0)$. The law of $\beta_2$ is:
$$ \mu_{x_1}(\beta_2)=\frac{c_{n-1}}{c_n}\sin^{n-2}\beta_2\,d\beta_2.$$
We have $x_2=\sin\beta_1\cos\beta_2$, so the law of $x_2$  is:
\begin{eqnarray*}
d\mu_{x_1}(x_2) 
&=&\frac{c_{n-1}}{c_n}\sin^{n-3}\beta_2\frac{dx_2}{\sin\beta_1}\\
&=&\frac{c_{n-1}}{c_n}\left(1-\frac{x_2^2}{\sin^2\beta_1}\right)^{(n-3)/2}\frac{dx_2}{\sin\beta_1} \\
&=& \frac{c_{n-1}}{c_n}\cdot\frac{(1-x_1^2-x_2^2)^{(n-3)/2}}{(1-x_1^2)^{(n-2)/2}}\,dx_2.
\end{eqnarray*}
Together with the law of $x_1$ found above, this gives the result.
\end{proof}

\section{The modelling problem}

Let us go back to Proposition 8.1. This gives a model for the $S^n$-hyperspherical law, meaning the common law of the standard $n+1$ coordinates of $S^n\subset\mathbb R^{n+1}$, as a variable over the sphere $S^{2n-1}$. The point now is that we can obtain a whole family of variables over $S^{2n-1}$ which are $S^n$-hyperspherical, simply by using the action of $O_{2n}$.

We recall that the standard coordinates of $\mathbb R^{2n}$ are denoted $x_1,\ldots,x_n,y_1,\ldots,y_n$. So, the points of $\mathbb R^{2n}$ are the column vectors $(^x_y)$, and the action of an orthogonal matrix $U\in O_{2n}$ can be written as $U(^x_y)=(^{Ux}_{Uy})$, where $Ux,Uy$ are as usual column vectors.

\begin{proposition}
The following variables have the same law:
\begin{enumerate}
\item $\xi(U)=\Sigma Ux_i^2-\Sigma Uy_i^2:S^{2n-1}\to\mathbb R$, for any $U\in O_{2n}$,

\item $u_{ij}:SO_{n+1}\to\mathbb R$, for any $i,j\in\{1,\ldots,n+1\}$.
\end{enumerate}
\end{proposition}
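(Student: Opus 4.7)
The plan is to reduce Proposition 9.1 directly to Proposition 8.1 via the $O_{2n}$-invariance of the uniform measure on $S^{2n-1}$, and then to match the resulting distribution with that of $u_{ij}$ using the standard correspondence between the Haar measure on $SO_{n+1}$ and the uniform measure on $S^n$.

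First, I would observe that for any $U \in O_{2n}$, the transformation $\binom{x}{y} \mapsto U\binom{x}{y}$ preserves the uniform measure on $S^{2n-1}$, since that measure is $O_{2n}$-invariant. Denoting the image by $\binom{x'}{y'}$, the variable $\xi(U) = \sum (x_i')^2 - \sum (y_i')^2$ is thus obtained by applying the fixed function $\sum x_i^2 - \sum y_i^2$ to a uniform random point of $S^{2n-1}$. Consequently $\xi(U)$ has the same law as $\xi(I)$ for every $U \in O_{2n}$, so the dependence on $U$ disappears.

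Next, Proposition 8.1 gives that $\xi(I)$ has the same law as the coordinate $x : S^n \to \mathbb{R}$. To finish, I would use that for $n \geq 1$ the group $SO_{n+1}$ acts transitively on $S^n$ with stabilizer $SO_n$, so the map $u \mapsto u e_j$ pushes the Haar measure on $SO_{n+1}$ forward to the uniform measure on $S^n$. Hence each entry $u_{ij}$ of a uniform rotation has the same law as the $i$-th coordinate on $S^n$, which by the rotational invariance of the uniform spherical measure is the same as the law of $x$. Chaining these three identifications yields the proposition.

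The argument contains no real obstacle; it is essentially a composition of Proposition 8.1 with the orthogonal invariance of the spherical measure. The only mild subtlety is to confirm that restricting from $O_{n+1}$ to $SO_{n+1}$ does not alter the marginal law of $u_{ij}$, which is immediate from the transitivity of the $SO_{n+1}$-action on $S^n$ for $n \geq 1$.
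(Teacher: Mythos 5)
Your argument is correct and follows essentially the same route as the paper, which deduces the result from Proposition 8.1 together with the $O_{2n}$-invariance of the uniform measure on $S^{2n-1}$. The only difference is that you spell out the (implicit in the paper) identification of the law of $u_{ij}$ on $SO_{n+1}$ with that of a coordinate on $S^n$ via the orbit map $u\mapsto ue_j$, which is a welcome clarification but not a different method.
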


\begin{proof}
This is clear from Proposition 8.1, and from the invariance of the uniform measure on the sphere $S^{2n-1}$ under the action of $O_{2n}$, i.e. from $d(^x_y)=dU(^x_y)$.
\end{proof}

The 3 diagonal variables in the Euler-Rodrigues formula are obviously all of the form $\xi(U)$, and the same holds in fact for the 6 off-diagonal variables. This key observation, to be discussed in detail a bit later, suggests the following general question.

\begin{problem}
Given a subset $K\subset\{1,\ldots,n+1\}^2$, is it possible to find matrices $U^{ij}\in O_{2n}$ with $(i,j)\in K$, such that the following families have the same joint law:
\begin{enumerate}
\item $\xi(U^{ij}):S^{2n-1}\to\mathbb R$, with $(i,j)\in K$,

\item $u_{ij}:SO_{n+1}\to\mathbb R$, with $(i,j)\in K$.
\end{enumerate}
\end{problem}

As a first remark, depending on the shape of $K$, we have several problems: ``one-row problem'', ``$2\times 2$ problem'', ``diagonal problem'', and so on. The $(n+1)\times (n+1)$ problem, which of course solves all these questions, will be refered to as the ``global problem''.

In what follows we will discuss some simple solutions to some of these problems. It is convenient to represent a ``solution'' by the $K$-shaped array $U^K=\{U^{ij}|(i,j)\in K\}$.

\begin{proposition}
The $1\times 1$ problem has the solution $U^K=(1)$. More generally, we have the solution $U^K=(U)$, for any matrix $U\in O_{2n}$.
\end{proposition}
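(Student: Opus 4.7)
The plan is to observe that this is essentially an immediate consequence of Proposition 9.1. Indeed, the $1\times 1$ problem asks only about matching a single marginal law, so there is no joint-law issue to handle: we just need to exhibit one matrix $U^{11}\in O_{2n}$ so that $\xi(U^{11})$ has the same law on $S^{2n-1}$ as $u_{ij}$ does on $SO_{n+1}$ for the given single pair $(i,j)\in K$.

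First I would recall from Proposition 9.1 that for \emph{any} matrix $U\in O_{2n}$ the variable $\xi(U)=\sum (Ux)_i^2-\sum (Uy)_i^2$ on $S^{2n-1}$ has the same law as any standard coordinate $u_{ij}$ on $SO_{n+1}$. Specializing this to $U=1\in O_{2n}$, we see that $\xi(1)=\sum x_i^2-\sum y_i^2$ already has the correct law by Proposition 8.1, which gives the first claim with $U^K=(1)$.

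For the more general claim, it suffices to pick an arbitrary $U\in O_{2n}$ and set $U^K=(U)$. Since Proposition 9.1 yields the equality in law of $\xi(U)$ with $u_{ij}$ without any restriction on $U$, this gives the result directly.

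The main (and only) obstacle is the purely formal one of checking that we have correctly interpreted the ``$1\times 1$ problem'' from Problem 9.2: when $|K|=1$ the joint-law condition reduces to a single marginal, so no further combinatorial or algebraic work is required, and the statement follows at once from the $O_{2n}$-invariance of the uniform measure on $S^{2n-1}$ already exploited in the proof of Proposition 9.1.
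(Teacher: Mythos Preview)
Your proof is correct and follows essentially the same approach as the paper: the first assertion is deduced from Proposition 8.1 (since $\xi(1)$ is exactly the variable appearing there), and the second from Proposition 9.1.
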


\begin{proof}
The first assertion follows from Proposition 8.1, because the variable $\xi(1)$ is the one appearing there. As for the second assertion, this follows from Proposition 9.1.
\end{proof}

Let us discuss now the cases $n=1,2$, where the modelling problem can be solved by using the well-known structure results for the groups $O_2,O_3$. In order to deal with the case $n=1$, let us introduce the following matrices:
$$\rho=\frac{1}{\sqrt{2}}\begin{pmatrix}1&1\\ -1&1\end{pmatrix},\quad\quad
\rho^{-1}=\frac{1}{\sqrt{2}}\begin{pmatrix}1&-1\\ 1&1\end{pmatrix}.$$
These are the $45^\circ$ counterclockwise and clockwise rotations in the plane.

\begin{proposition}
The global $n=1$ problem has the following solution:
$$U^K=\begin{pmatrix}1&\rho\\ \rho^{-1}&1\end{pmatrix}.$$
More generally, the $16$-element subgroup of $O_2$ generated by $S_2$ and by $\rho$ produces via $U\to\xi(U)$ the $4$-variable correlated set $\{\pm\cos t,\pm\sin t\}$, repeated $4$ times.
\end{proposition}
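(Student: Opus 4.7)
The plan is to parametrize $S^1=\{x^2+y^2=1\}$ by $(x,y)=(\cos s,\sin s)$ with $s$ uniform on $[0,2\pi)$, and then to compute $\xi(U)$ directly for the three relevant matrices $1,\rho,\rho^{-1}$. For $U=1$ we get $\xi(1)=x^2-y^2=\cos 2s$; for $U=\rho$, a short expansion yields $\xi(\rho)=\tfrac12(x+y)^2-\tfrac12(y-x)^2=2xy=\sin 2s$; and similarly $\xi(\rho^{-1})=-\sin 2s$. Setting $t=2s$, which is again uniform on the circle, the joint law of $(\xi(1),\xi(\rho),\xi(\rho^{-1}),\xi(1))$ equals the joint law of $(\cos t,\sin t,-\sin t,\cos t)$. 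Since a uniform element of $SO_2$ has entries $u_{11}=u_{22}=\cos t$ and $u_{21}=-u_{12}=\sin t$ with $t$ uniform, and since the joint law of its entries is invariant under $t\mapsto -t$, this exactly matches the joint law of the $u_{ij}$, establishing the first assertion.

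For the general claim, the first step is to derive a master formula. A direct expansion for $U=R_\theta$ the rotation by $\theta$ yields
$$\xi(R_\theta)=(x^2-y^2)\cos 2\theta-2xy\sin 2\theta=\cos(2s+2\theta).$$
For a reflection $U=PR_\theta$, where $P$ is the transposition in $S_2\subset O_2$, left multiplication by $P$ swaps the two components of $R_\theta(x,y)^t$ and hence swaps the roles of $Ux$ and $Uy$ in the definition of $\xi$; this gives $\xi(PR_\theta)=-\xi(R_\theta)=-\cos(2s+2\theta)$.

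It then remains to identify the 16-element subgroup. Since $\rho$ has order $8$ in $O_2$ and $P$ conjugates $\rho$ to $\rho^{-1}$, the group generated by $S_2$ and $\rho$ is the dihedral group $D_8$ of order $16$, consisting of the eight rotations $R_{k\pi/4}$ and the eight reflections $PR_{k\pi/4}$ for $k=0,\ldots,7$. By the master formula, the rotations contribute the eight values $\cos(2s+k\pi/2)$, which as $k$ ranges over $0,\ldots,7$ enumerate $\cos t,-\sin t,-\cos t,\sin t$ each with multiplicity $2$; the reflections contribute the negatives of these, giving $-\cos t,\sin t,\cos t,-\sin t$ each with multiplicity $2$. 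Together the sixteen variables cover $\{\pm\cos t,\pm\sin t\}$ with multiplicity exactly $4$, as claimed. The whole calculation is essentially mechanical; the only care point is the reflection sign, which one can double-check against the direct value $\xi(P)=y^2-x^2=-\cos 2s$, matching $-\cos(2s+0)$.
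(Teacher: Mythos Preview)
Your proof is correct and follows essentially the same approach as the paper: compute $\xi(1),\xi(\rho),\xi(\rho^{-1})$ directly, identify them with the entries of a uniform $SO_2$ matrix, and then recognize the group generated by $\rho$ and the coordinate swap as the dihedral group of order $16$. Your treatment is in fact a bit more explicit than the paper's: where the paper simply says ``the correlation between $x^2-y^2$ and $2xy$ is the same as the one between $\cos t$ and $\sin t$'', you make this concrete via the substitution $t=2s$; and where the paper invokes ``symmetry reasons'' to reduce the $16$ elements to the four representatives $\{1,\sigma,\rho,\rho^{-1}\}$, you instead derive the closed formula $\xi(R_\theta)=\cos(2s+2\theta)$, $\xi(PR_\theta)=-\cos(2s+2\theta)$ and enumerate directly. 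Both routes give the same conclusion with the same underlying idea.
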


\begin{proof}
Let us denote by $x,y$ the coordinates of $\mathbb R^2$. The actions of the three matrices in the statement are as follows:
$$1\begin{pmatrix}x\\ y\end{pmatrix}=\begin{pmatrix}x\\ y\end{pmatrix},
\quad\quad
\rho\begin{pmatrix}x\\ y\end{pmatrix}=\frac{1}{\sqrt{2}}\begin{pmatrix}x+y\\ -x+y\end{pmatrix},
\quad\quad
\rho^{-1}\begin{pmatrix}x\\ y\end{pmatrix}=\frac{1}{\sqrt{2}}\begin{pmatrix}x-y\\ x+y\end{pmatrix}.$$
Thus the corresponding $\xi$ variables are as follows:
\begin{eqnarray*}
\xi(1)&=&x^2-y^2,\\
\xi(\rho)&=&1/2((x+y)^2-(x-y)^2)=2xy,\\
\xi(\rho^{-1})&=&1/2((x-y)^2-(x+y)^2)=-2xy.
\end{eqnarray*}
So, the solution proposed in the statement is as follows:
$$\xi(U^K)=\begin{pmatrix}x^2-y^2&2xy\\ -2xy&x^2-y^2\end{pmatrix}.$$
Consider on the other hand the standard parametrization of $SO_2$, namely:
$$u=\begin{pmatrix}\cos t&\sin t\\ -\sin t&\cos t\end{pmatrix}.$$
We already know fom Proposition 9.3 that all 8 entries of $\xi(U^K)$ and of $u$ follow a common law (the arcsine one). So, we just have to check that the correlation between $x^2-y^2,2xy$ is the same as the one between $\cos t,\sin t$, and this is clear.

Regarding now the last assertion, the first remark is that the group generated by $\rho$ and by the coordinate switch $\sigma=(^0_1{\ }^1_0)$ is the dihedral group $D_8$. This group is given by $D_8=\mathbb Z_8\rtimes\mathbb Z_2$, and its elements are $\{\rho^k,\sigma\rho^k|k=0,1,\ldots,7\}$. Now by symmetry reasons, these 16 elements produce via $U\to\xi(U)$ the same variables as the 4 elements $\{1,\sigma,\rho,\rho^{-1}\}$, repeated 4 times.  But these latter 4 elements produce the variables $\{\pm (x^2-y^2),\pm 2xy\}$, which are the same as the variables $\{\pm\cos t,\pm\sin t\}$, and this finishes the proof.
\end{proof}

Let us discuss now the case $n=2$. We will see that the global modelling problem here is more or less equivalent, probabilistically speaking, to the Euler-Rodrigues formula. Consider the $45^\circ$ counterclockwise and clockwise rotations in the $z_1,z_2$ planes: 
$$\rho=\frac{1}{\sqrt{2}}\begin{pmatrix}1&1&0&0\\ -1&1&0&0\\ 0&0&1&1\\ 0&0&-1&1\end{pmatrix},\quad\quad
\rho^{-1}=\frac{1}{\sqrt{2}}\begin{pmatrix}1&-1&0&0\\ 1&1&0&0\\ 0&0&1&-1\\ 0&0&1&1\end{pmatrix}.$$
Let us introduce as well the following matrices:
$$\tau=\begin{pmatrix}
1&0&0&0\\
0&0&1&0\\
0&0&0&1\\
0&1&0&0
\end{pmatrix},
\quad\quad
\tau^{-1}=\begin{pmatrix}
1&0&0&0\\
0&0&0&1\\
0&1&0&0\\
0&0&1&0
\end{pmatrix}.$$
Observe that these are the two cycles in $S_3$, acting on the last 3 coordinates.

\begin{proposition}
The global $n=2$ problem has a solution of the following type, where $\rho_{ij}$ are various compositions of $\rho,\rho^{-1}$ with the permutations in $S_4$:
$$U^K=\begin{pmatrix}1&\rho_{12}&\rho_{13}\\ \rho_{21}&\tau&\rho_{23}\\ \rho_{31}&\rho_{32}&\tau^{-1}\end{pmatrix}.$$
More generally, the subgroup of $O_4$ generated by $S_4$ and by $\rho$ produces via $U\to\xi(U)$ a number of copies of the variables $\pm u_{ij}$, with $(i,j)\in\{1,2,3\}^2$, where $u\in SO_3$.
\end{proposition}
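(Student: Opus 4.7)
The plan is to exploit the probabilistic content of the Euler--Rodrigues double cover $S^3 \to SO_3$. Identifying $(x_1, x_2, y_1, y_2)$ with $(x, y, z, t)$, the Haar measure on $SO_3$ is the pushforward of the uniform measure on $S^3$, so any family $\{U_{ij}\} \subset O_4$ for which $\xi(U_{ij}) = u_{ij}$ holds as an identity of quadratic polynomials on $\mathbb{R}^4$ automatically solves the global $n=2$ problem. The task thus reduces to a finite polynomial computation. The three diagonal entries are immediate: $\xi(1) = x^2 + y^2 - z^2 - t^2 = u_{11}$, and the permutation $\tau$ cyclically permutes the last three coordinates to give $\xi(\tau) = x^2 + z^2 - y^2 - t^2 = u_{22}$, with $\xi(\tau^{-1}) = u_{33}$ symmetrically.

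For the six off-diagonal entries I would use conjugates of the form $\sigma \rho^{\pm 1} \sigma^{-1}$ with $\sigma \in S_4$. The key observation is that $\rho$ is a $45^\circ$ rotation in the coordinate planes $(1,2)$ and $(3,4)$, so such a conjugate is a $45^\circ$ rotation in any chosen pair of disjoint coordinate planes. The identity $\tfrac{1}{2}((a+b)^2 - (a-b)^2) = 2ab$, applied twice, collapses the corresponding $\xi$-value to a sum of two pure cross-terms $\pm 2x_iy_j \pm 2x_ky_l$. For instance, taking $\sigma = (2\,4)$ one verifies $\xi(\sigma \rho^{-1} \sigma) = 2(x_2 y_1 - x_1 y_2) = 2(yz - xt) = u_{12}$; enumerating the three unordered partitions of $\{1,2,3,4\}$ into two pairs, crossed with the two orientations $\rho$ and $\rho^{-1}$, then exhausts the six off-diagonal Euler--Rodrigues entries.

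For the second assertion, about the subgroup $G = \langle S_4, \rho \rangle \subset O_4$, I would write $\xi(U) = v^T U^T J U v$ with $v = (x_1, x_2, y_1, y_2)^T$ and $J = \mathrm{diag}(1,1,-1,-1)$, so that $\xi$ factors through the map $U \mapsto U^T J U$ with image in the space of symmetric signature-$(2,2)$ matrices. A generator-level calculation shows that elements of $S_4$ conjugate $J$ into the diagonal signature-$(2,2)$ matrices representing $\pm u_{ii}$, while the $S_4$-conjugates of $\rho^{\pm 1}$ interleave these with ``off-diagonal'' matrices corresponding to $\pm u_{ij}$ ($i \ne j$) via the cross-term collapse above. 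Tracing the orbit of $J$ under the $G$-action $M \mapsto V^T M V$ then shows that this orbit coincides with the eighteen signature-$(2,2)$ matrices representing $\{\pm u_{ij}\}$, with multiplicities given by the corresponding stabilizer orders.

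The main obstacle is precisely this orbit computation: enumerating the $G$-orbit of $J$ and tallying the multiplicities cleanly. A conceptual alternative would be to recognize $G$ as sitting inside the image in $O_4$ of the two-sided action of $SU_2 \times SU_2$ on $\mathbb{H} = \mathbb{R}^4$ underlying the Euler--Rodrigues formula, which reduces the count to an orbit calculation in a small explicit finite subgroup of $(SU_2 \times SU_2)/\{\pm 1\}$.
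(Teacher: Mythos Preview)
Your overall strategy is sound and matches the paper's: identify the Haar measure on $SO_3$ with the pushforward of the uniform measure on $S^3$ via Euler--Rodrigues, and then exhibit each $u_{ij}$ as $\xi(U_{ij})$ for some explicit $U_{ij}\in\langle S_4,\rho\rangle$. The diagonal part is fine.

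The gap is in your treatment of the off-diagonal entries. You propose to realise all six as $\xi(\sigma\rho^{\pm1}\sigma^{-1})$ with $\sigma\in S_4$, but this fails for $u_{23}=2(zt-xy)$ and $u_{32}=2(xy+zt)$. Writing $\xi(U)=v^TU^TJUv$ with $J=\mathrm{diag}(1,1,-1,-1)$, a conjugate $\sigma\rho^{\pm1}\sigma^{-1}$ is a $45^\circ$ rotation in the planes $(e_{\sigma(1)},e_{\sigma(2)})$ and $(e_{\sigma(3)},e_{\sigma(4)})$, and the cross terms in $\xi$ survive only when each of these two planes straddles the $J$-eigenspaces, i.e.\ when $\{\sigma(1),\sigma(2)\}$ meets both $\{1,2\}$ and $\{3,4\}$. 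In that case the surviving products are $v_{\sigma(1)}v_{\sigma(2)}$ and $v_{\sigma(3)}v_{\sigma(4)}$, which can only be $xz,xt,yz,yt$ --- never $xy$ or $zt$. Equivalently, the partition $\{1,2\}\{3,4\}$ gives back a diagonal form $\xi=x^2+y^2-z^2-t^2$, so your ``three partitions times two orientations'' count produces only four of the six off-diagonal entries. (Note also that swapping $\rho\leftrightarrow\rho^{-1}$ alone only produces $-u_{13}$ from $u_{13}$, not $u_{31}$; one must vary $\sigma$ within the same partition to get the independent sign pattern.)

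The fix, which is what the paper does, is to use one-sided products $\sigma\rho^{\pm1}$ rather than conjugates. For instance $\rho_{23}=\sigma\rho^{-1}$ with $\sigma=(1\ 2\ 4)\in S_4$ gives $\xi(\rho_{23})=2(zt-xy)=u_{23}$: the final permutation moves the $(x\pm y)$ combinations to opposite sides of $J$, which a conjugate cannot do. This is also why the proposition itself speaks of ``compositions of $\rho,\rho^{-1}$ with the permutations in $S_4$'' rather than conjugates. Once you widen to products $\sigma\rho^{\pm1}$ the rest of your argument goes through essentially as in the paper, by direct verification against the Euler--Rodrigues matrix. Your $U^TJU$-orbit framework for the second assertion is reasonable, and indeed neither you nor the paper carries out the full multiplicity count.
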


\begin{proof}
According to the definition of $\tau,\tau^{-1}$, we have the following formulae:
\begin{eqnarray*}
\xi(1)&=&x^2+y^2-z^2-t^2,\\
\xi(\tau)&=&x^2+z^2-t^2-y^2,\\
\xi(\tau^{-1})&=&x^2+t^2-y^2-z^2.
\end{eqnarray*}
The upper matrices in the statement can be taken as follows:
$$\rho_{12}=\frac{1}{\sqrt{2}}\begin{pmatrix}
0&1&1&0\\
1&0&0&-1\\
0&1&-1&0\\
1&0&0&1
\end{pmatrix},
\rho_{13}=\frac{1}{\sqrt{2}}\begin{pmatrix}
1&0&1&0\\
0&1&0&1\\
1&0&-1&0\\
0&1&0&-1
\end{pmatrix},
\rho_{23}=\frac{1}{\sqrt{2}}\begin{pmatrix}
0&0&1&1\\
1&-1&0&0\\
0&0&1&-1\\
1&1&0&0
\end{pmatrix}.
$$
The action of these matrices is as follows:
$$\rho_{12}\begin{pmatrix}x\\ y\\ z\\ t\end{pmatrix}=\frac{1}{\sqrt{2}}\begin{pmatrix}y+z\\ x-t\\ y-z\\ x+t\end{pmatrix}
,\quad\quad
\rho_{13}\begin{pmatrix}x\\ y\\ z\\ t\end{pmatrix}=\frac{1}{\sqrt{2}}\begin{pmatrix}x+z\\ y+t\\ x-z\\ y-t\end{pmatrix}
,\quad\quad
\rho_{23}\begin{pmatrix}x\\ y\\ z\\ t\end{pmatrix}=\frac{1}{\sqrt{2}}\begin{pmatrix}z+t\\ x-y\\ z-t\\ x+y\end{pmatrix}.$$
Thus the corresponding $\xi$ variables are as follows:
\begin{eqnarray*}
\xi(\rho_{12})&=&1/2((y+z)^2+(x-t)^2-(y-z)^2-(x+t)^2)=2(yz-xt),\\
\xi(\rho_{13})&=&1/2((x+z)^2+(y+t)^2-(x-z)^2-(y-t)^2)=2(xz+yt),\\
\xi(\rho_{23})&=&1/2((z+t)^2+(x-y)^2-(z-t)^2-(x+y)^2)=2(zt-xy).
\end{eqnarray*}
As for the lower matrices in the statement, these can be taken as follows:
$$\rho_{21}=\frac{1}{\sqrt{2}}\begin{pmatrix}
0&1&1&0\\
1&0&0&1\\
0&1&-1&0\\
1&0&0&-1
\end{pmatrix},
\rho_{31}=\frac{1}{\sqrt{2}}\begin{pmatrix}
1&0&-1&0\\
0&1&0&1\\
1&0&1&0\\
0&1&0&-1
\end{pmatrix},
\rho_{32}=\frac{1}{\sqrt{2}}\begin{pmatrix}
0&0&1&1\\
1&1&0&0\\
0&0&1&-1\\
1&-1&0&0
\end{pmatrix}.
$$
The action of these matrices is as follows:
$$\rho_{21}\begin{pmatrix}x\\ y\\ z\\ t\end{pmatrix}=\frac{1}{\sqrt{2}}\begin{pmatrix}y+z\\ x+t\\ y-z\\ x-t\end{pmatrix}
,\quad
\rho_{31}\begin{pmatrix}x\\ y\\ z\\ t\end{pmatrix}=\frac{1}{\sqrt{2}}\begin{pmatrix}x-z\\ y+t\\ x+z\\ y-t\end{pmatrix}
,\quad
\rho_{32}\begin{pmatrix}x\\ y\\ z\\ t\end{pmatrix}=\frac{1}{\sqrt{2}}\begin{pmatrix}z+t\\ x+y\\ z-t\\ x-y\end{pmatrix}.$$
Thus the corresponding $\xi$ variables are as follows:
\begin{eqnarray*}
\xi(\rho_{21})&=&1/2((y+z)^2+(x+t)^2-(y-z)^2-(x-t)^2)=2(xt+yz),\\
\xi(\rho_{31})&=&1/2((x-z)^2+(y+t)^2-(x+z)^2-(y-t)^2)=2(yt-xz),\\
\xi(\rho_{32})&=&1/2((z+t)^2+(x+y)^2-(z-t)^2-(x-y)^2)=2(xy+zt).
\end{eqnarray*}
Summing up, the solution proposed in the statement is as follows:
$$\xi(U^K)=\begin{pmatrix}
x^2+y^2-z^2-t^2&2(yz-xt)&2(xz+yt)\\
2(xt+yz)&x^2-y^2+z^2-t^2&2(zt-xy)\\
2(yt-xz)&2(xy+zt)&x^2-y^2-z^2+t^2
\end{pmatrix}.$$
But this is exactly the Euler-Rodrigues formula, and this finishes the proof.
\end{proof}

\begin{theorem}
The $1\times 2$ modelling problem has the solution $U^K=\begin{pmatrix}1& \rho\end{pmatrix}$, where $\rho$ is the $45^\circ$ rotation in each $z_i$ plane.
\end{theorem}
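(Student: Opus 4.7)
The plan is to recognize $\xi(1)+i\xi(\rho)$ as the complex variable $\Sigma z_i^2$ already studied in Section 8, and then invoke Theorem 8.2 to transfer the joint law to $S^n$, where it manifestly matches the joint law of two coordinates of the first row of $SO_{n+1}$.

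First I would compute the two marginals explicitly. By definition $\xi(1)=\Sigma x_i^2-\Sigma y_i^2$. Since $\rho$ is the $45^\circ$ counterclockwise rotation in each $(x_i,y_i)$ plane, it sends $(x_i,y_i)\mapsto\tfrac{1}{\sqrt 2}(x_i+y_i,-x_i+y_i)$, so
$$\xi(\rho)=\tfrac12\Sigma(x_i+y_i)^2-\tfrac12\Sigma(-x_i+y_i)^2=2\Sigma x_iy_i.$$
Setting $z_i=x_i+iy_i$, this gives the key identity
$$\xi(1)+i\xi(\rho)=\Sigma(x_i^2-y_i^2)+2i\Sigma x_iy_i=\Sigma z_i^2.$$

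Next I would apply Theorem 8.2, which says that $\Sigma z_i^2:S^{2n-1}\to\mathbb C$ has the same law as $z:S^n\to\mathbb C$. Reading the proof of Theorem 8.2, the target variable ``$z$'' means $z=x_1+ix_2$, with $(x_1,\ldots,x_{n+1})$ the standard coordinates on $S^n$, and the joint density of $(\mathrm{Re}\,z,\mathrm{Im}\,z)$ is in fact computed there (with common value $\tfrac{c_{n-1}c_n}{2^{n-1}c_{2n}}(1-X^2-Y^2)^{(n-3)/2}dX\,dY$). Separating real and imaginary parts therefore identifies the joint law of $(\xi(1),\xi(\rho))$ on $S^{2n-1}$ with the joint law of $(x_1,x_2)$ on $S^n$.

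To conclude, I would note that the first row of a Haar-random element of $SO_{n+1}$ is uniformly distributed on $S^n$, by $SO_{n+1}$-invariance of both measures; hence the joint law of $(u_{11},u_{12})$ on $SO_{n+1}$ coincides with that of $(x_1,x_2)$ on $S^n$. Combining the two identifications yields the desired equality of joint distributions. There is no substantive obstacle: the solution $U^K=(1,\rho)$ is engineered precisely so that the real and imaginary parts of $\Sigma z_i^2$ appear as $\xi(1)$ and $\xi(\rho)$, and Theorem 8.2 supplies the nontrivial content. The only point that deserves a line of care is the observation that Theorem 8.2 does identify the full complex-valued laws (not just the one-real-variable marginals), which is what allows the conclusion about the \emph{joint} law of $(u_{11},u_{12})$.
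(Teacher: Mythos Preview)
Your argument is correct and is exactly the approach the paper takes: the paper's own proof simply says that Theorem 9.6 is a reformulation of Theorem 8.2 obtained by taking real and imaginary parts, and you have spelled this out in detail, including the identification $\xi(1)+i\xi(\rho)=\Sigma z_i^2$ and the passage from $S^n$ to the first row of $SO_{n+1}$. Your care in noting that Theorem 8.2 identifies the full complex-valued (hence joint) law, not just the marginals, is precisely the point that makes the argument go through.
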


\begin{proof}
This is just a reformulation of Theorem 8.2, by taking real and imaginary parts, and by using the various notations and identifications from the beginning of this section.
\end{proof}

\section{Concluding remarks}

We have seen in this paper that the computation of the integrals $I(a)$ leads to a number of concrete questions regarding the symmetries and normalization of such integrals, as well as to a number of questions regarding their probabilitic modelling, over spheres.

All these questions depend in a crucial way on the shape $p\times q$ of our matrix $a$.

Most of the open problems that we have at this time concern the case of $3\times 3$ matrices. So, the main problem that we would like to raise here is that of understanding the integrals $I(a)$ for matrices of type $a\in M_{3\times 3}(\mathbb N)$, perhaps taken to be upper triangular.

In addition, we have the question of finding noncommutative analogues of these results and techniques. The subject here is quite interesting as well, because it is closely related to the meander determinant problematics, considered in \cite{dif}, \cite{dgg}. See \cite{gra}.

We intend to come back to these questions in some future work.

\end{document}